\theoremstyle{plain}
\newtheorem{theorem}{Theorem}[section]
\newtheorem{lemma}[theorem]{Lemma}
\newtheorem{cor}[theorem]{Corollary}
\newtheorem{proposition}[theorem]{Proposition}
\newtheorem{defi}[theorem]{Definition}
\newtheorem*{theorem*}{Theorem}
\theoremstyle{remark}\newtheorem{con}[theorem]{Construction}
\theoremstyle{remark} \newtheorem*{remark}{Remark}
\theoremstyle{remark} 
\newcommand{\bb}{\mathbb}
\newcommand{\mrm}{\mathrm}
\newcommand{\mbf}{\mathbf}
\newcommand{\ov}{\overline}
\newcommand{\mcal}{\mathcal}
\newcommand{\mb}{\mathbf}
\newcommand{\mfk}{\mathfrak}
\newcommand{\one}{{1}}
\newcommand{\sli}{\mathrm{SL}}
\newcommand{\gl}{\mathrm{GL}}
\newcommand{\gu}{\mathrm{GU}}
\newcommand{\bo}{\mathrm{B}}
\newcommand{\irr}{\mathrm{Irr}}
\newcommand{\F}{{\mathbb {F}}} 
\newcommand{\PP}{{\mathbb{P}}} 
\newcommand{\R}{{\mathbb {R}}}
\newcommand{\C}{{\mathbb {C}}}
\newcommand{\GL}{{\mathrm{GL}}}
\newcommand{\GU}{{\mathrm{GU}}}
\newcommand{\SL}{{\mathrm{SL}}}
\title{Perfect state transfer in graphs related to linear groups in two dimensions}
\author[V.R.T. Pantangi]{Venkata Raghu Tej Pantangi}
\address{Department of Mathematics and Statistics\\University of Regina\\Regina SK S4S 0A2\\Canada}
\author[P. Sin]{Peter Sin}
\address{Department of Mathematics\\University of Florida\\ P. O. Box 118105\\ Gainesville FL 32611\\ USA}
\thanks{This work was partially supported by a grant from the Simons Foundation (\#633214
 to Peter Sin). The first author was supported by the Pacific Institute of Mathematical Sciences Postdoctoral Fellowship.}
\keywords{Cayley graph, orbital schemes, perfect state transfer, linear groups.}
\begin{document}

\date{}

\begin{abstract}
We construct families of graphs from linear groups $\SL(2,q)$, $\GL(2,q)$
  and $\GU(2,q^2)$, where $q$ is an odd prime power, with the property that the continuous-time quantum walks on the associated networks of qubits admit perfect state transfer.
\end{abstract}
\maketitle
\section{Introduction} 
Let $X$ be a simple finite graph, with adjacency matrix $A$, for some
fixed ordering of the vertex set $V(X)$. In this paper we shall study a quantum mechanical model known as a {\it continuous-time quantum walk} on $X$, based on the $XX$-Hamiltonian \cite[IV.E]{Kay}. In this model, the evolution of states is governed by the unitary matrices $U(t)=\exp(-itA)$, for $t\in\R$ (denoting time). For background on quantum walks we refer the reader to \cite{Christandl} and \cite{CoutinhoGodsilBook}.

A phenomenon of central importance in the theory is {\it perfect  state transfer} (PST).
Let $\C^{V(X)}$ denote the Hilbert space  in which the characteristic vectors
$e_a$, $a\in V(X)$, form an orthonormal basis.
Let $a$ and $b$ be vertices of $X$. We say that we have perfect state transfer from $a$ to $b$ at time $\tau$ if $U(\tau)e_a=\gamma e_b$
for some $\gamma\in\C$ of absolute value 1; in other words, an initial state  concentrated on the vertex $a$ evolves at time $\tau$ to one concentrated on $b$.
A spectral characterization (c.f Theorem~2.4.4 \cite{coutinhothesis}) of graphs admitting PST was obtained in Coutinho's thesis \cite{coutinhothesis}. 
An early result of Godsil \cite[Corollary 10.2]{GodsilStateTransfer} shows that for a given maximal degree there are only finitely many connected graphs that admit PST. It is therefore of interest to consider infinite families of connected graphs that admit PST. In this paper we construct several such families.

In order to provide context for our results we give a brief review, based partly on the list in \cite{CoutinhoGodsil}, of previous work in this direction.
Among paths, only those with two or three
vertices admit PST. More generally, it has recently been shown \cite{CoutinhoJulianoSpier} that these paths are the only trees that admit PST. Complete graphs $K_n$ do not admit PST for $n>2$.
Among strongly regular graphs, we have PST only in the trivial case of a
complement to a disjoint union of an even number of copies of $K_2$. However, bipartite
doubles of many
strongly regular graphs and distance regular graphs\cite{coutinho2015perfect} do provide examples of PST.

For graphs belonging to the Johnson scheme $J(n,k)$ of $k$-subsets in a set of size $n$, it was proved in \cite{Ahmadi} that PST can occur only if $n=2k$. In this case, the Kneser graph $K(2k,k)$ is a disjoint union of $\frac12\binom{2k}{k}$
copies of $K_2$ so, obviously, it admits PST. When $\binom{2k}{k}$ is divisible by $4$, the complement also admits PST. Apart from these, no other graph belonging to the scheme has yet been found that admits PST, and it has been shown that $K(2k,k)$ is the only generalized Johnson graph admitting PST.

There are some general results on the behavior of PST with respect to certain constructions such as products, covers and joins \cite{Angeles-Canul}, \cite{CoutinhoGodsil}, yielding further examples. There are also general results  about graphs belonging to association schemes \cite{coutinho2015perfect}.  An important result proved at this level of generality is the fact that PST implies that one of the $0-1$ matrices defining the association scheme must be a permutation matrix of order 2 having no fixed points. 
Other examples arise from
Cayley graphs, mostly on abelian groups. For nonabelian groups, examples of PST have been found for some {\it normal} Cayley graphs, that is those in which the connection set is closed under conjugation. In both cases, the graphs belong to the conjugacy association scheme, and the aforementioned result implies that PST requires the existence of a central
involution in the group. Some general results on abelian Cayley graphs are proved in \cite{TanFengCao}. Hypercubes were among the first examples of PST studied
\cite{Christandl} and, more generally, cubelike graphs have been studied in detail in
\cite{BernasconiGS},\cite{CheungGodsil} and \cite{ChanUMCubes}, and numerous
families admitting PST have been found. PST in Integral circulants has been classified
\cite{Basic}  and this result has been generalized to Cayley graphs on abelian groups with cyclic Sylow $2$-subgroups \cite{ArnadottirGodsil2}.
Nonabelian groups with normal Cayley graphs that admit PST include
Dihedral groups of order divisible by 4 \cite{CaoFeng}, and extraspecial $2$-groups \cite{SorciSin}.

\begin{remark}
 As mentioned above,
 if a normal Cayley graph on a group $G$ admits PST, then $G$ must necessarily contain a central involution. In the case $G$ is a group containing a central involution $z$, the Cayley graph on $G$ with $G\setminus \{1,z\}$ as its ``connection'' set, is the complement of the disjoint union of $|G|/2$ copies of $K_{2}$. As we discussed already, this graph admits PST, provided $ |G| \equiv 0 \pmod{4}$. 
 
 While this graph belongs to the conjugacy class scheme on $G$, it is a ``trivially'' occurring example. It is of interest to find examples of groups with central involutions 
 whose conjugacy class schemes contain nontrivial examples of graphs admitting PST.
 
 It is also desirable for these nontrivial examples to be connected, as otherwise they are just disjoint unions of graphs 
 belonging to conjugacy class schemes on a subgroup of $G$.   
\end{remark}

In this paper we construct four families of connected graphs exhibiting PST from the linear groups
$\SL(2,q)$, $\GL(2,q)$ and $\GU(2,q)$, where $q$ is an odd prime power.

Three of these are families of normal Cayley graphs and, as far as we know, are the first such examples in which the underlying groups are not solvable. 

\begin{theorem}\label{thm:cay}
Let $q$ be an odd prime power. If $G$ is one of $\gl(2,q)$, $\gu(2,q)$, or $\sli(2,q)$, there exists a connected normal Cayley graph $\Gamma$ on $G$ admitting PST  such that $\ov{\Gamma}$ is not 
the disjoint union of copies of $K_{2}$.
\end{theorem}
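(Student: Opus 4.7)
The plan is to use the standard spectral theory of normal Cayley graphs: if $S\subseteq G$ is an inverse-closed union of $G$-conjugacy classes, then the adjacency matrix of $\Gamma=\mrm{Cay}(G,S)$ has eigenvalue $\lambda_\chi = \chi(1)^{-1}\sum_{s\in S}\chi(s)$ on the $\chi$-isotypic component of the regular representation, for each $\chi\in\irr(G)$. Since $q$ is odd, each of the groups $G\in\{\gl(2,q),\gu(2,q),\sli(2,q)\}$ contains the central involution $z=-I$, and I would aim for PST from $1$ to $z$. The concrete target is the well-known sufficient condition: $\Gamma$ admits PST from $1$ to $z$ at time $\tau=\pi/g$ provided that (a) every $\lambda_\chi$ is an integer divisible by some fixed positive integer $g$, and (b) the integer $\lambda_\chi/g$ is even when $\chi(z)=\chi(1)$ and odd when $\chi(z)=-\chi(1)$.

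With this criterion in hand, the theorem is reduced to exhibiting, for each $G$, an inverse-closed union of conjugacy classes $S\neq G\setminus\{1,z\}$ satisfying (a) and (b) with $\langle S\rangle=G$. I would turn next to the explicit ordinary character tables: the classical table of $\sli(2,q)$ and those of $\gl(2,q)$, $\gu(2,q)$ due to Green and Ennola. A direct obstruction to (a) is that character values on semisimple classes typically have the form $\zeta+\zeta^{-1}$ for a root of unity $\zeta$ of large order, so $\sum_{s\in S}\chi(s)$ is rational only when $S$ is invariant under the Galois action on conjugacy classes. The natural building blocks of $S$ are therefore Galois-orbit packets of the split-semisimple classes (parametrised by pairs $\{a,a^{-1}\}\subset\F_q^\times$) and of the non-split semisimple classes (parametrised by norm-$1$ elements of $\F_{q^2}^\times$ modulo inversion), with the option of adjoining the unipotent classes and/or their $z$-translates to adjust parities.

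The main obstacle is arranging condition (b): the parities of $\lambda_\chi/g$ must align globally across $\irr(G)$ with the central signs $\chi(z)/\chi(1)$, and these signs can be read off immediately from the degree and Deligne--Lusztig type of $\chi$. This yields a rigid system of simultaneous linear constraints on the weights of the conjugacy-class packets composing $S$. I expect that a short search guided by the structure of the character table produces a natural candidate (for instance, the sum of a full Galois orbit of non-split semisimple classes, combined with a unipotent class or its $z$-translate), after which both (a) and (b) are checked by direct evaluation of the relevant character sums.

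Connectivity $\langle S\rangle=G$ is comparatively easy: once $S$ contains a regular semisimple element and a non-semisimple element, the standard subgroup structure of the two-dimensional linear groups forces $\langle S\rangle=G$, and one verifies immediately that the chosen $S$ is not all of $G\setminus\{1,z\}$. A reasonable strategy for handling the three groups uniformly is to first solve the problem in $\sli(2,q)$, whose character table is smallest, and then to lift the construction using the normal inclusions $\sli(2,q)\trianglelefteq \gl(2,q)$ and $\sli(2,q)\trianglelefteq \gu(2,q)$ together with the Clifford-theoretic relationship between their conjugacy classes and characters, so that the heart of the argument is one explicit computation carried out over a Galois-invariant packet of $\sli(2,q)$-classes.
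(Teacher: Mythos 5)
Your framework coincides with the paper's: eigenvalues $\chi(\mfk{S})/\chi(1)$ of a normal Cayley graph, PST from $1$ to the central involution $-I$ tested by an integrality-plus-parity criterion extracted from Coutinho's characterization, and Galois-invariant unions of conjugacy classes to force integrality. But what you have written is a search plan rather than a proof. The entire content of the theorem is the exhibition of a specific connection set for each of the three groups and the verification of the parity condition across all of $\irr(G)$, and you do neither: you ``expect that a short search produces a natural candidate'' and that the congruences are ``checked by direct evaluation.'' Your tentative candidate (a Galois orbit of non-split semisimple classes plus one unipotent class or its $z$-translate) is unverified and is not what works: for $\sli(2,q)$ the paper takes all four unipotent classes together with $\{-I\}$ and no semisimple classes at all, while for $\gl(2,q)$ it takes the split class of $\mrm{diag}(1,-1)$, every class $c_{2}(x)$, and the non-split classes $c_{4}(z)$ with $z^{q+1}$ equal to $1$ or a non-square; each choice is then pushed through four separate character-sum lemmas.

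The concrete flaw is your plan to solve $\sli(2,q)$ first and lift to $\gl(2,q)$ and $\gu(2,q)$ via Clifford theory. Every building block you propose for the $\sli(2,q)$ step (unipotent classes, their $-I$-translates, norm-one non-split classes) consists of elements of determinant $1$, so the lifted connection set lies inside $\sli(2,q)$ and the resulting Cayley graph on $\gl(2,q)$ is disconnected --- it is the disjoint union of $q-1$ copies of the $\sli(2,q)$ graph, as the paper itself remarks after Theorem~\ref{thm:pstsl}. Restoring connectivity forces you to include classes whose determinants generate $\F_{q}^{\times}$, and then the parity conditions must be verified for the $q-1$ linear characters $\lambda\circ\det$ and the $q-1$ Steinberg-type characters $S_{\lambda}$; these all restrict to $\sli(2,q)$ as just the trivial character or $\sigma$, so no computation over $\sli(2,q)$ can control those $2(q-1)$ constraints, whose outcome genuinely depends on whether the squares lie in $\ker\lambda$ (Lemmas~\ref{lem:lin} and~\ref{lem:st}). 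The paper accordingly treats each group with its own connection set and its own calculations, with the $\gl$/$\gu$ parallelism coming from Ennola duality rather than from restriction to $\sli(2,q)$.
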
 
The connection sets of these Cayley graphs, which have a uniform definition for each family, will be specified later in \S~\ref{sub:glpst}, \S~\ref{sub:gupst}, and \S~\ref{sub:slpst} once the required notation has been established.  At that point the reader may notice a parallelism in the definitions of the $\gl(2,q)$ and $\gu(2,q)$ graphs, as well as in the proofs of these cases of Theorem\ref{thm:cay}, that is congruous with the spirit of "Ennola duality".

The fourth family is constructed in orbital (or Schurian) association schemes, defined in \S\ref{sec:os} below.
(\cite{BannaiIto} and \cite{GMbook} are references for orbital schemes.) When $q$ is a prime power, one of the main results of \cite{gow} shows that the double cosets of $\GL(2,q)$ in $\GL(2,q^2)$ give rise to an orbital association scheme, which we denote by $\gl(2,q^{2})\sslash \gl(2,q)$.

\begin{theorem}\label{thm:orb}
Let $q$ be a prime power with $q\equiv 3 \pmod{4}$.  Then there exists a connected graph
$\Gamma$ belonging to $\gl(2,q^{2})\sslash \gl(2,q)$ admitting PST such that $\ov{\Gamma}$ is not a disjoint union of copies of $K_{2}$.
\end{theorem}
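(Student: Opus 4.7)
The plan is to apply the general theory of perfect state transfer in commutative association schemes to the orbital scheme $\gl(2,q^{2})\sslash \gl(2,q)$. Set $G=\GL(2,q^2)$ and $H=\GL(2,q)$. As observed in the remark following Theorem~\ref{thm:cay}, a prerequisite for PST in this scheme is the existence of a double coset $HxH$ whose $0$--$1$ relation matrix is a fixed-point-free permutation matrix of order two; this happens precisely when $x\in N_G(H)\setminus H$ and $x^2\in H$. To produce such an $x$ I would first compute $N_G(H)$. Since $H$ generates $M_2(\F_q)$ as an $\F_q$-algebra, conjugation by any $g\in N_G(H)$ preserves $M_2(\F_q)$, and by Skolem--Noether induces an inner automorphism of it. The residual freedom is the centraliser of $M_2(\F_q)$ in $M_2(\F_{q^2})$, namely $Z(G)=\F_{q^2}^\times\cdot I$, so $N_G(H)=H\cdot Z(G)$ and $N_G(H)/H\cong \F_{q^2}^\times/\F_q^\times$ is cyclic of order $q+1$. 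Since $q$ is odd, the unique element of order two in this quotient is represented by $x=\zeta I$ with $\zeta\in\F_{q^2}^\times$ and $\zeta^2\in\F_q^\times\setminus(\F_q^\times)^2$. One checks $HxH=xH$ and $x^2\in H$, so the corresponding relation matrix $M$ is symmetric of order two with zero diagonal, i.e.\ a perfect matching.

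I would then analyse the spectrum of the scheme. Because $(G,H)$ is a Gelfand pair, as follows from Gow's theorem cited in the introduction, the Bose--Mesner algebra is commutative; its primitive idempotents $E_\pi$ are indexed by the $H$-spherical irreducible representations $\pi$ of $G$, and the eigenvalues of any relation graph on $\mathrm{im}(E_\pi)$ are proportional to the values $\phi_\pi(y)=\langle \pi(y)v_\pi,v_\pi\rangle$ of the spherical function at double-coset representatives $y$. Since $M$ lies in the Bose--Mesner algebra and $M^2=I$, each $\mathrm{im}(E_\pi)$ is an eigenspace for $M$ with eigenvalue $\epsilon_\pi\in\{\pm 1\}$; concretely, $\epsilon_\pi=+1$ precisely when the central character of $\pi$ is trivial on $\zeta I$. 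The known classification of $H$-spherical representations of $\GL(2,q^2)$ then yields the partition of the idempotents according to this sign.

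With this in hand, the spectral criterion for PST (Theorem~2.4.4 of \cite{coutinhothesis}) requires a symmetric union of double cosets $\Gamma$ for which there exist $\tau\in\R$ and $\gamma\in\C$ with $|\gamma|=1$ satisfying $e^{-i\tau\lambda_\pi}=\gamma\epsilon_\pi$ for every $H$-spherical $\pi$, where $\lambda_\pi$ is the eigenvalue of the adjacency matrix of $\Gamma$ on $\mathrm{im}(E_\pi)$. Equivalently, the eigenvalues $\lambda_\pi$ with $\epsilon_\pi=+1$ must lie in one arithmetic progression with common difference $2\pi/\tau$, while those with $\epsilon_\pi=-1$ lie in another progression offset by $\pi/\tau$. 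I would compute the spherical function values on each double coset explicitly, in terms of character values of $\GL(2,q^2)$ and quadratic Gauss sums, and then select a symmetric connection set realising eigenvalues with this commensurability pattern. The hypothesis $q\equiv 3\pmod 4$ is expected to enter precisely here, fixing the sign of a quadratic Gauss sum (via $-1$ being a non-square in $\F_q$) so as to secure the $2$-adic valuations needed for a real $\tau$ to exist.

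The main obstacle is this final step, namely the explicit construction of $\Gamma$ and the verification of the eigenvalue commensurability. The remaining requirements are comparatively routine: connectedness of $\Gamma$ reduces to checking that the chosen symmetric connection set, together with $H$, generates $G$, which is a question about the subgroup lattice of $G$; and the condition that $\ov{\Gamma}$ is not a disjoint union of copies of $K_2$ amounts to $\Gamma\neq J-I-M$, which is easily arranged by the choice of connection set.
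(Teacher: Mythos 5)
Your framework is the right one and matches the paper's: the pair $(G,H)=(\GL(2,q^2),\GL(2,q))$ is a Gelfand pair by Gow's theorem, the order-two relation comes from $N_G(H)$, and indeed the paper takes $z=\zeta I$ with $\zeta\in\F_{q^2}^\times\setminus\F_q^\times$ and $\zeta^2\in\F_q^\times$, exactly your $x$. Your reformulation of the PST criterion via the signs $\epsilon_\pi=\omega_\pi(\zeta I)=\pm1$ and eigenvalue congruences is also correct (it is the paper's Corollary~\ref{cor:ospst}). But everything up to that point is setup; the theorem's content is the existence of a connection set whose eigenvalues actually satisfy the congruence pattern, and you stop precisely there, saying you would ``select a symmetric connection set realising eigenvalues with this commensurability pattern'' and flagging this as the main obstacle. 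That is the entire proof, not a remaining verification. Nothing in your argument shows such a set exists, and no candidate is proposed.

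Concretely, what is missing is all of the following, which occupies the paper's \S\ref{sec:orbproof}. First, the explicit connection set: $HzH$ together with \emph{all} double cosets $Hm_{x,y}H$ for diagonal $m_{x,y}=\mathrm{diag}(x,y)$ with $x\F_q^\times\neq y\F_q^\times$ (Construction~\ref{con:ospst}); with this choice the problem reduces to showing every eigenvalue of $D=A-A_z$ is an integer divisible by $4$ (Proposition~\ref{prop:0m4evalue}). Second, the determination of $\irr(G\sslash H)$: no cuspidal character of $G$ is $H$-spherical, and the spherical principal-series, linear and Steinberg-type constituents are pinned down by the kernel conditions $\bo(q)\subset\mrm{Ker}(\theta)$ or $C\subset\mrm{Ker}(\theta)$, where $C$ is the torus $\{\mathrm{diag}(a,a^q)\}$. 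Third, the explicit evaluation of the coset character sums $I[\theta](m_{x,y}H)$ (Lemma~\ref{lem:charsumH} and Corollary~\ref{cor:charsumdiag}), which is the technical heart and requires a concrete matrix model of the induced representation and an analysis of the $H$-orbits on the projective line over $\F_{q^2}$. Finally, the divisibility: the resulting sums are orthogonality relations for characters of $\F_{q^2}^\times$, and the eigenvalues come out as multiples of $q+1$ or of $(q^2-1)/2$, so $q\equiv 3\pmod 4$ enters simply because $4\mid q+1$ --- no quadratic Gauss sums arise, contrary to your guess about the mechanism (though you correctly anticipated that the hypothesis controls $2$-adic valuations). Until a connection set is exhibited and its spectrum computed, the proposal does not establish the theorem.
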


 We believe that these are the first nontrivial examples of perfect state transfer on graphs  belonging to an orbital scheme. The Johnson scheme is also an  orbital scheme but, as mentioned in the Introduction,  the only known graphs belonging to a Johnson scheme admitting PST are the trivial
examples of the Kneser graphs $K(2k,k)$, and their complements  when $\binom{2k}{k}$ is divisible by 4.

To assist the reader we describe next the organization of this paper. There are two sections of preliminaries. \S~\ref{section_assoc} deals with association schemes, recalling the definitions of the conjugacy class schemes and orbital schemes which provide the setting for our later calculations. We also include here the characterization of PST in graphs belonging to an association scheme, which is central to what follows.
In \S~\ref{sec:repandcc} we recall the well-known descriptions 
of the characters and conjugacy classes of the groups
$\GL(2,q)$, $\GU(2,q)$ and $\SL(2,q)$ and establish notation.

The proof of Theorem~\ref{thm:cay} is given in \S~\ref{sec:cayproof}, with one subsection devoted to each
family. These subsections can be read independently. It is here
that the precise definitions of the Cayley graphs are to be found. As mentioned above, the constructions and proofs
for $\GL(2,q)$ and $\GU(2,q)$ are strongly parallel
with $\GU(2,q)$ objects related to their $\GL(2,q)$
counterparts by  "change of signs".

The proof of Theorem~\ref{thm:orb} is carried out in the final section, which requires no material from  \S~\ref{sec:cayproof}.
The constructions of the orbital graphs are given and
the question of PST is reduced to a simple statement (Proposition~\ref{prop:0m4evalue})
which is then established by a series of calculations, requiring
the careful evaluation of character sums over cosets.

\section{PST in association schemes}\label{section_assoc}
A $d$-class association scheme on $n$ vertices, is a set $\mcal{A}=\{A_{0},A_{1},\ldots A_{d}\}$
of   square $0$-$1$ matrices of order $n$ satisfying the following:
\begin{enumerate}
\item $A_{0}=I$;
\item $\sum\limits_{i=0}^{d} A_{i}=J$;
\item $A_{i}^{T} \in \mcal{A}$, for all $0\leq i\leq d$; and
\item $A_{i}A_{j}=A_{j}A_{i} \in \mrm{Span}\left( \mcal{A}\right)$.
\end{enumerate}

The Bose--Mesner algebra $\bb{C}[\mcal{A}]$ of $\mcal{A}$ is the complex algebra generated 
 by the matrices in $\mcal{A}$. By design, elements of $\mcal{A}$ form a basis for $\bb{C}[\mcal{A}]$. It is well known that $\bb{C}[\mcal{A}]$ also has a basis $\mcal{E}=\{E_{0},E_{1},\ldots E_{d}\}$ of primitive idempotents.
 
 Given $i,j \in \{0,1,\ldots, d\}$, let $\lambda_{i,j}$ be such that $A_{i}E_{j}= \lambda_{i,j}E_{j}$. The $\mcal{A} \times \mcal{E}$ array whose $(i,j)$-th entry is $\lambda_{i,j}$, is called the character table of the scheme $\mcal{A}$. Note that the $i$-th row of the character table gives the spectrum of $A_{i}$.

\begin{defi}
A graph $X$ is said to belong to an association scheme $\mcal{A}$ if the adjacency matrix $A_{X}$ of $X$ is in $\bb{C}[\mcal{A}]$. Equivalently, $A_{X}$ is the sum of a subset of the $A_i$, $i\neq0$. 
\end{defi}      
We note that, as we assume graphs to be undirected and without loops, the subset of the $A_i$ in this definition must
be closed under transposition and not include $A_0$.

Coutinho has given necessary and sufficient conditions \cite[Theorems 2.4.4 and 3.2.1, Corollary 3.3.2]{coutinhothesis} for PST between two vertices of  a graph belonging to an association scheme. We now reformulate his results in a  way
which is more convenient for our purposes.

\begin{theorem}\label{PSTAssoc} Let $X$ be a graph  
belonging to a $d$-class association scheme with primitive idempotents
$E_0$, \ldots, $E_d$, and corresponding eigenvalues $\deg(X)=\theta_0 \geq \theta_1 \geq \cdots\geq  \theta_d$.
Let $g=\gcd\{\theta_0-\theta_i\ : \ i=0,\ldots, d\}$. 
Then perfect state transfer occurs at a time $t$ between vertices $a$ and $b$ if and only if all of the following conditions hold.
\begin{itemize}
\item[(a)] The eigenvalues of X are integers.
\item[(b)] The relation of the scheme that contains $(a, b)$, say $T$, is a permutation matrix of order 2 with no fixed points.
\item[(c)] Let $\Phi^+=\{\theta_j : TE_j=E_j\}$ and $\Phi^-=\{\theta_j : TE_j=-E_j\}$. Then 
$v_2(\theta_0-\theta_i)=v_2(g)$ for all $\theta_j\in \Phi^+$ and $v_2(\theta_0-\theta_j)>v_2(g)$
for all $\theta_j\in \Phi^-$. (Here $v_2(m)$ is the $2$-adic valuation
of the integer $m$.)
\end{itemize}
Moreover, $t$ is an odd multiple of $\frac{\pi}{g}$
\end{theorem}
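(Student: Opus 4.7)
My plan is to recast the vertex-level PST condition as a matrix equation inside the Bose--Mesner algebra $\C[\mcal{A}]$ and then read off the spectral conditions from the idempotent basis. Writing the spectral decomposition $A_X=\sum_j \theta_j E_j$ gives $U(t)=\exp(-itA_X)=\sum_j e^{-it\theta_j}E_j\in\C[\mcal{A}]$. Likewise, since the scheme relation containing $(a,b)$ is a $0$-$1$ matrix in $\C[\mcal{A}]$, its incidence matrix $T$ decomposes as $T=\sum_j \tau_j E_j$ for certain scalars $\tau_j$.

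The first substantive step is the equivalence $U(t)e_a=\gamma e_b \Longleftrightarrow U(t)=\gamma T$. Any matrix in $\C[\mcal{A}]$ is constant on each scheme relation, and two matrices in $\C[\mcal{A}]$ agreeing on a single row must be equal; the PST condition says the $a$-th row of $U(t)$ is $\gamma e_b^{T}$, which matches $\gamma$ times the $a$-th row of $T$ precisely when $T$ has a unique $1$ in row $a$ (at column $b$). Combined with constancy of row sums on scheme matrices and the symmetry of $U(t)$ inherited from $A_X$, this forces $T$ to be a symmetric permutation matrix, i.e.\ the matrix of an involution with no fixed points (since $a\neq b$), establishing (b). Comparing spectral expansions in $U(t)=\gamma T$ then yields $e^{-it\theta_j}=\gamma\tau_j$ for all $j$; using $\tau_0=1$ (since $TE_0=E_0$ with $E_0=\frac{1}{n}J$) and $\tau_j\in\{\pm1\}$ (from $T^2=I$), this gives $\gamma=e^{-it\theta_0}$ and $e^{it(\theta_0-\theta_j)}=\tau_j$ for every $j$.

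From this identity, $t(\theta_0-\theta_j)/\pi \in \Z$ for all $j$, which I would use to deduce condition (a)---either by invoking that PST at time $t$ implies periodicity at time $2t$ (and a periodic graph has integer eigenvalues, which is standard), or by a direct Galois-theoretic argument using $\theta_0\in\Z$ together with the fact that the $\theta_j$ are algebraic integers and that their pairwise differences are commensurable. With (a) in hand, set $d_j:=\theta_0-\theta_j$, $g:=\gcd\{d_j\}$, and $m_j:=d_j/g\in\Z$; the constraint $t d_j/\pi\in\Z$ for all $j$ becomes $t=k\pi/g$ for some positive integer $k$, and the spectral condition reads $(-1)^{k m_j}=\tau_j$. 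A brief parity analysis shows this is solvable exactly when $k$ is odd and the $m_j$ satisfy the $2$-adic valuation conditions of (c), which also pins down $t$ as an odd multiple of $\pi/g$. The converse follows by reversing these steps, and the main technical point---the one step going beyond linear algebra inside $\C[\mcal{A}]$---is the eigenvalue integrality in (a).
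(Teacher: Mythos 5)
The paper itself does not prove this theorem: it is presented as a reformulation of results quoted from Coutinho's thesis. Your argument is therefore a genuinely self-contained alternative, and its core is sound. The reduction of $U(t)e_a=\gamma e_b$ to the matrix identity $U(t)=\gamma T$ inside $\C[\mcal{A}]$ is valid because every relation of the scheme has positive valency, so a matrix in the Bose--Mesner algebra is determined by any single row or column; from this the deduction that $T$ is a symmetric permutation matrix with zero diagonal, the passage to $e^{it(\theta_0-\theta_j)}=\tau_j$ with $\tau_j=\pm1$ and $\tau_0=1$, and the endgame $t=k\pi/g$ with $k$ odd and $(-1)^{m_j}=\tau_j$ are all correct.

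Two genuine problems remain. First, the integrality step (a) is not actually justified: the parenthetical claim that ``a periodic graph has integer eigenvalues'' is false in general (the path $P_3$ is periodic, and even admits PST, with eigenvalues $0,\pm\sqrt2$), so you cannot simply invoke periodicity at time $2t$. The Galois-theoretic alternative you gesture at is the right repair, but it genuinely uses the scheme structure: here $\theta_0=\deg(X)$ is a rational eigenvalue lying in the eigenvalue support of every vertex (since $(E_j)_{aa}=\operatorname{rk}(E_j)/n>0$ for every $j$) and is strictly maximal, and in the would-be quadratic case, where some $\theta_j$ and its conjugate $\theta_{j'}$ are irrational, the relations $t(\theta_0-\theta_j)\in\pi\Z$ force $\theta_j+\theta_{j'}=2\theta_0$, contradicting maximality. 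This argument needs to be written out (or the precise statement from Coutinho's thesis, Corollary~3.3.2, cited). Second, your parity analysis, carried out correctly, yields $v_2(\theta_0-\theta_j)>v_2(g)$ for $\theta_j\in\Phi^+$ and $v_2(\theta_0-\theta_j)=v_2(g)$ for $\theta_j\in\Phi^-$, which is the \emph{opposite} of condition (c) as printed. The printed condition appears to be a typo --- test it on $K_2$, or compare with how Corollary~\ref{cor:pstccs} is applied with $a=2$ later in the paper --- so you should state explicitly which version your analysis produces rather than deferring to ``the conditions of (c)''.
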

\begin{remark}We note that the condition of {\it strong cospectrality} in  \cite[Theorem 2.4.4(i)]{coutinhothesis} is implied by our condition (c) as the latter implies that $\Phi^+\cap\Phi^-=\emptyset$. Hence, if $\tilde E_\theta$
is the idempotent projection for the eigenvalue $\theta$ and 
$\tilde E_\theta=E_{i_1}+\cdots E_{i_t}$ its decomposition into primitive
idempotents then we have $TE_{i_j}=\sigma_\theta E_{i_j}$ for all $j$,
where $\sigma_\theta=\pm1$ is independent of $j$. Thus $\tilde E_\theta e_b= \tilde E_\theta Te_a= \sigma_\theta\tilde E_\theta e_a$.
\end{remark}

In this paper, we are interested in two types of association schemes that arise from finite groups: (a) conjugacy class schemes and (b) orbital (or Schurian) schemes.
 We first describe the conjugacy class scheme.

\subsection{Conjugacy Class Schemes}\label{sec:ccs}
 Let $G$ be a finite group and let $\{C_{0},C_{1},\ldots C_{d}\}$ be the set of conjugacy classes in $G$, with 
 $C_{0}=\{1\}$ Given $0\leq i \leq d$, let $A_{i}$ be the $G \times G$ matrix, with 
\[ A_{i}(g,h) = \begin{cases}
1 & \text{if $hg^{-1} \in C_{i}$}, \\
0 & \text{otherwise.} 
\end{cases},\] for all $g,h \in G$.
It is well-known (see for e.g. \S~3.3 of \cite{GMbook}) that the set $\mcal{A}_{G}=\{A_{i}\ :\ i \in \{0,1, \ldots d\}\}$ forms an association scheme and that its Bose-Mesner algebra is isomorphic to the centre of the group algebra $\bb{C}[G]$, that is, $\bb{C}[\mcal{A}_{G}]=Z\left(\mrm{End}(\bb{C}[G])\right)$.

Let $\mrm{Irr}(G)$ denote the set of irreducible complex characters of $G$. Given $\psi \in \mrm{Irr}(G)$, let $E_{\psi}$ be the $G \times G$ matrix with $E_{\psi}(g,h)= \dfrac{\psi(hg^{-1})\psi(1)}{|G|}$. The set $\{E_{\psi}\ :\ \psi \in \mrm{Irr}(G)\}$ is  the basis of primitive idempotents for $\bb{C}[\mcal{A}_{G}]$. If $c$ is an element in the class $C_{i}$, then we have 
\begin{equation}\label{eq:ccsct}
A_{i}E_{\psi} = \dfrac{|C_{i}|\psi(c^{-1})}{\psi(1)}E_{\psi}.
\end{equation}  

Using Theorem~\ref{PSTAssoc}, if there is a graph admitting PST that belongs to the conjugacy class scheme on $G$, then there is some $t \in G$ such that $A_{t}$ is a permutation matrix of order $2$. We note that $A_{t}$ is a permutation matrix of order $2$ if and only if $t$ is a central involution. In other words, the existence of a graph admitting PST belonging to the conjugacy class scheme on $G$ implies the existence of a central involution in $G$.

Let $G$ be a group possessing a central involution $z$.  Given a graph $\Gamma$ belonging to the conjugacy class scheme on $G$, there exists an inverse-closed set $\mfk{S} \subset G$ which is  a union of conjugacy classes such that $\Gamma:=\mathrm{Cay}(G, \mfk{S})$. Let $A_{\Gamma}$ denote the adjacency matrix of $\Gamma$. Given $\chi \in \irr(G)$, let $\theta_{\chi}$ be the eigenvalue of $A_{\Gamma}$ satisfying $A_{\Gamma}E_{\chi}=\theta_{\chi}E_{\chi}$. Using \eqref{eq:ccsct}, we have 
\[\theta_{\chi}:=  \dfrac{\ov{\chi(\mfk{S})}}{\chi(1)}=\dfrac{{\chi(\mfk{S})}}{\chi(1)}.\] The last equality follows since $\mfk{S}$ is inverse-closed. 

To check if $\Gamma$ admits PST, we apply Theorem~\ref{PSTAssoc}. The role of $T$ in the theorem is played by $A_{\{t\}}$. Using \eqref{eq:ccsct} and Theorem~\ref{PSTAssoc}, we have the following result.
\begin{cor}\label{cor:pstccs}
Let $G$ be a group possessing a central involution $t$, and let  $\mfk{S}$ be an inverse-closed set which is a union of conjugacy classes in $G$. Consider the Cayley graph $\Gamma:=\mathrm{Cay}(G,\mfk{S})$. Given $\chi \in \irr(G)$, define $\theta_{\chi} := \dfrac{{\chi(\mfk{S})}}{\chi(1)}$, \[\Phi^{+}=\{\theta_{\chi}\ :\ \chi(t)/\chi(1)=1\}\] and \[\Phi^{-}=\{\theta_{\chi}\ :\ \chi(t)/\chi(1)=-1\}.\] If,  
\begin{enumerate}[(i)]
    \item $\theta_{\chi} \in \bb{Z}$, for all $\chi \in \irr(G)$; and
    \item there exists $a \in \bb{Z}$ such that $\theta_{\chi} \equiv \begin{cases} a \pmod{4} & \text{if $\chi \in \Phi^{+}$} \\
a+2 \pmod{4} & \text{if $\chi \in \Phi^{-}$,}
\end{cases}$  
\end{enumerate} 
then there is perfect state transfer between vertices $x$ and $xt$ of $\Gamma$, for all $x \in G$. \qed
\end{cor}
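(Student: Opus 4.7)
The plan is to derive the corollary as a direct application of Theorem~\ref{PSTAssoc} to the conjugacy class scheme $\mcal{A}_G$. Since $\mfk{S}$ is a union of conjugacy classes, $\Gamma=\mathrm{Cay}(G,\mfk{S})$ belongs to $\mcal{A}_G$, its primitive idempotents are the $E_\chi$ with $\chi\in\irr(G)$, and the eigenvalue of $A_\Gamma$ on $E_\chi$ is $\theta_\chi=\chi(\mfk{S})/\chi(1)$ by~\eqref{eq:ccsct} and the inverse-closure of $\mfk{S}$. Moreover, the relation of $\mcal{A}_G$ containing $(x,xt)$ is $A_{\{t\}}$, since $(xt)x^{-1}=t$ and the conjugacy class of a central element is a singleton.

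Next I would verify conditions (a)--(c) of Theorem~\ref{PSTAssoc}. Condition (a) is exactly hypothesis (i). For condition (b), $A_{\{t\}}$ is the permutation matrix of left multiplication by $t$, which has order two with no fixed points since $t$ is a nontrivial involution. For the spectral action in (c), applying~\eqref{eq:ccsct} with the singleton class $\{t\}$ gives $A_{\{t\}}E_\chi=(\chi(t)/\chi(1))E_\chi$, and Schur's lemma applied to the central involution $t$ forces $\chi(t)/\chi(1)=\pm 1$. Hence the sets $\Phi^+$ and $\Phi^-$ of the corollary coincide with the $+1$- and $-1$-eigenspaces of $T:=A_{\{t\}}$ in Theorem~\ref{PSTAssoc}.

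The key computation is the $2$-adic condition. The trivial character lies in $\Phi^+$, so by hypothesis (ii), $\theta_0=|\mfk{S}|\equiv a\pmod{4}$. Consequently, for $\chi\in\Phi^+$ we obtain $\theta_0-\theta_\chi\equiv 0\pmod{4}$, so $v_2(\theta_0-\theta_\chi)\ge 2$, while for $\chi\in\Phi^-$ we obtain $\theta_0-\theta_\chi\equiv 2\pmod{4}$, so $v_2(\theta_0-\theta_\chi)=1$. Column orthogonality in the character table of $G$ forces $\Phi^-$ to be nonempty whenever $t\neq 1$, hence $v_2(g)=1$, which matches the $\Phi^-$ values exactly and is strictly less than the $\Phi^+$ values. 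Condition (c) is therefore satisfied, and Theorem~\ref{PSTAssoc} yields PST between $x$ and $xt$ at any odd multiple of $\pi/g$.

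I do not anticipate any real obstacle; the corollary is essentially an unpacking of Theorem~\ref{PSTAssoc} in the conjugacy class scheme, and the only subtle bookkeeping is the alignment of the $\pm$-labels on $\chi(t)/\chi(1)$ with the $\pm$-eigenspaces of $T$ used by the theorem.
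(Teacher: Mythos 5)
Your proposal is correct and follows essentially the same route as the paper, which offers no separate proof of Corollary~\ref{cor:pstccs} beyond the two paragraphs preceding it: identify $T$ with $A_{\{t\}}$, read off the eigenvalues from \eqref{eq:ccsct}, and invoke Theorem~\ref{PSTAssoc}. Your unpacking of conditions (a) and (b) and the computation $v_{2}(\theta_{0}-\theta_{\chi})\geq 2$ on $\Phi^{+}$, $v_{2}(\theta_{0}-\theta_{\chi})=1=v_{2}(g)$ on $\Phi^{-}$ (with $\Phi^{-}\neq\emptyset$ by column orthogonality) are all sound. One point deserves explicit comment, though: the valuations you derive are the \emph{opposite} of what condition (c) of Theorem~\ref{PSTAssoc} literally demands, since that condition as printed asks for $v_{2}(\theta_{0}-\theta_{j})=v_{2}(g)$ on $\Phi^{+}$ and $v_{2}(\theta_{0}-\theta_{j})>v_{2}(g)$ on $\Phi^{-}$. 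You assert ``condition (c) is therefore satisfied'' without reconciling this. In fact the paper's reformulation of Coutinho's criterion has the roles of $\Phi^{+}$ and $\Phi^{-}$ interchanged in (c): as stated it can never hold, because $\theta_{0}\in\Phi^{+}$ (the all-ones idempotent is fixed by any permutation matrix $T$) while $v_{2}(\theta_{0}-\theta_{0})=v_{2}(0)=\infty\neq v_{2}(g)$; the correct form, which your $K_{2}$-style bookkeeping implicitly uses and which matches \cite[Theorem 2.4.4]{coutinhothesis}, is $v_{2}(\theta_{0}-\theta_{j})>v_{2}(g)$ on $\Phi^{+}$ and $=v_{2}(g)$ on $\Phi^{-}$. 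So your argument is the right one and the corollary is true, but you should state that you are using the corrected version of (c) rather than the one printed in the paper.
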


\subsection{Orbital Schemes}\label{sec:os}
Let $G$ be a finite group and let $H \leq G$. By $G/H$, we denote the set of left cosets of $H$ in $G$. Let $D$ be a complete set of $(H, H)$--double coset representatives in $G$. Given $d \in D$, we define $A_{d}$ to be the $G/H \times G/H$ matrix satisfying
\[A_{d}(xH, yH) = \begin{cases}
1 & \text{if $x^{-1}y \in HdH$}, \\
0 & \text{otherwise.} 
\end{cases},\]  
for all $xH,yH \in G/H$.

A complex character $\psi$ of $G$ is said to be multiplicity-free if \[\{\left\langle \psi,\chi \right\rangle\ :\ \chi \in \irr(G)\}\subseteq \{0, 1\}.\]  
It is a well-known result (see for e.g., \cite[Corollary~13.5.1]{GMbook}) that $G\sslash H :=\{A_{d}\ : d \in D\}$ is an association scheme if and only if the permutation character $\one_{H}^{G}:=\mrm{Ind}_{H}^{G}\left( \one \right)$ is multiplicity-free. An \emph{orbital scheme} is an association scheme of the form $G\sslash H$, where $(G,H)$ is such that $\one_{H}^{G}$ is multiplicity-free. 

Consider an orbital scheme $G\sslash H$.
Let
\begin{equation}\label{eq:irrsummandsset}
\irr({G\sslash H})  :=\{\chi \in \irr(G)\ :\ \left\langle \psi,\chi \right\rangle=1 \}.
\end{equation}
 Let $S \subset G$ and $\psi$ a complex character of $G$, then by $\chi(S)$, we denote the sum $\sum\limits_{s \in S} \chi(S)$  

Given $\chi \in \irr_{G\sslash H} (G)$, let $\mb{E}_{\chi}$ be the $G/H \times G/H$ matrix satisfying 
\[\mb{E}_{\chi}(xH, yH) = \dfrac{\chi(1)}{|G|}\chi (x^{-1}yH).\] It is a standard result (see for eg. \cite[Corollary~13.8.2]{GMbook}) that $\{\mb{E}_{\chi}\ :\ \chi \in \irr({G\sslash H})\}$ is  the basis of primitive idempotents for $\bb{C}[G\sslash H]$. 

Given $d \in D$ and $\chi \in \irr(G \sslash H)$, we also have (c.f \cite[Corollary~13.9.1]{GMbook})
\begin{equation}\label{eq:osct}
A_{d}\mb{E}_{\chi} = \dfrac{\chi(d^{-1}H)}{|H \cap H^{d}|} \mb{E}_{\chi}.
\end{equation}
By Theorem~\ref{PSTAssoc}, if there is a  graph admitting PST in $G\sslash H$, then there is some $d \in G$ such that $A_{d}$ is a permutation matrix of order $2$. We now give a characterization of orbital schemes containing a permutation matrix of order $2$.
\begin{lemma}
Let $G$ be a finite group and let $H\leq G$ be a subgroup such that $\one_{H}^{G}$ is multiplicity-free. Let $N_{G}(H)$ be the normalizer of $H$ in $G$. Given $d \in D$, the matrix $A_{d} \in G\sslash H$ is a permutation matrix of order $2$ if and only if $d\in N_{G}(H) \setminus H$ and $d^{2} \in H$. 
\end{lemma}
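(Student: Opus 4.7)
The plan is to unpack the definition of $A_d$ as an operator on $G/H$, first determining when it is a permutation matrix and then when that permutation has order exactly two.

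For a fixed left coset $xH$, the number of $yH$ with $A_d(xH,yH)=1$ equals the number of left cosets of $H$ contained in the double coset $HdH$, namely $|HdH|/|H|$. By transposition symmetry the column sums are the same. Thus $A_d$ is a permutation matrix if and only if $|HdH|=|H|$, equivalently $HdH=dH$, equivalently $d^{-1}Hd\subseteq H$; and since $G$ is finite, this inclusion forces equality, which is precisely $d\in N_G(H)$.

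Assuming $d\in N_G(H)$, the unique coset $yH$ with $A_d(xH,yH)=1$ is $yH=xdH$, so $A_d$ implements the permutation $\pi_d\colon xH\mapsto xdH$ of $G/H$. Iterating, $\pi_d^2\colon xH\mapsto xd^2 H$, which is the identity exactly when $d^2\in H$. On the other hand, $\pi_d$ is itself the identity iff $xdH=xH$ for all $x$, iff $d\in H$. Combining, $A_d$ is a permutation matrix of order $2$ if and only if $d\in N_G(H)\setminus H$ and $d^2\in H$, as claimed. As a bonus, in this case $\pi_d$ is automatically fixed-point-free, matching condition (b) of Theorem~\ref{PSTAssoc}: a fixed point would mean $xdH=xH$ for some $x$, which forces $d\in H$.

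There is no substantive obstacle in this argument; the only point that might give brief pause is the standard finiteness step promoting $d^{-1}Hd\subseteq H$ to equality. The content of the lemma is essentially a bookkeeping translation from the matrix definition into the group-theoretic conditions on $d$.
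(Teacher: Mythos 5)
Your proof is correct and follows essentially the same route as the paper: both reduce ``$A_d$ is a permutation matrix'' to $HdH=dH$, i.e.\ $d\in N_G(H)$, and then identify $A_d$ with the translation $xH\mapsto xdH$ to read off the order condition $d^2\in H$, $d\notin H$. The only cosmetic difference is that you verify the permutation-matrix criterion by counting row and column sums $|HdH|/|H|$, while the paper argues directly from the entry $A_d(H,dH)=1$; the substance is identical.
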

\begin{proof}
Pick a $d \in G$, such that $A_{d}$ is a permutation matrix. Since  $A_{d}$ is a permutation matrix and since $A_{d}(H, dH)=1$, it follows that $A_{d}(H, xH)=0$ for all $xH \neq dH$, 
which means that $HdH=dH$. Hence $d \in N_{G}(H)$.  Conversely, if $d \in N_{G}(H)$, then $HdH=dH$, and thus $A_{d}$ must be a permutation matrix. Thus $A_{d}$ is a permutation matrix if and only if $d \in N_{G}(H)$. 

We observe that given $r,s \in N_{G}(H)$, we have $A_{r}A_{s}=A_{rs}$. We also note that $A_{r}=I$ if and only if $r\in H$. Thus $\{A_{d}\ :\ d \in N_{G}(H)\}$, forms a group isomorphic to $N_{G}(H)/H$. Using this, we conclude that given $d \in G$,  $A_{d}$ is a permutation matrix of order $2$ if and only if $d \in N_{G}(H) \setminus H$ and $d^{2} \in H$.  
\end{proof}

Assume that $G\sslash H$ is such that $N_{G}(H) \gneqq H$ and that there is $t \in N_{G}(H) \setminus H$ such that $t^{2} \in H$. From our discussion above, $A_{t}$ is a permutation matrix of order $2$. Let $D \subset G$ satisfying the following: (i) for all $x,y\in D$ with $x\neq y$, we have $HxH\neq HyH$; (ii) given $x \in D$, there is an $\tilde{x} \in D$ such that $x^{-1} \in H\tilde{x}H$; and (iii) $1 \notin D$. The matrix $A_{D}:=\sum\limits_{d \in D}A_{d}$ must be a symmetric $0$--$1$ matrix. Let $\Gamma_{D}$ be the graph in $G\sslash H$ whose adjacency matrix if $A_{D}$. Given $\chi \in \irr(G\sslash H)$, let $\theta_{\chi}$ be the eigenvalue satisfying $A_{D}\mbf{E}_{\chi}= \theta_{\chi} \mbf{E}_{\chi}$. Using \eqref{eq:osct}, we have 
\[\theta_{\chi}:=  \sum\limits_{d \in D} \dfrac{\chi(d^{-1}H)}{|H \cap H^{d}|}= \sum\limits_{d \in D} \dfrac{\chi(dH)}{|H \cap H^{d}|}.\] 
  Application of  Theorem~\ref{PSTAssoc}  to $\Gamma_{D}$, by setting $T:=A_{t}$, yields the following result.
\begin{cor}\label{cor:ospst}
 Let $G$, $H$, $t$, $D$, and $\Gamma_{D}$ be as described in the previous paragraph. Given $\chi \in \irr(G\sslash H)$, define 
 \[\theta_{\chi}: = \sum\limits_{d \in D} \dfrac{\chi(dH)}{|H \cap H^{d}|},\] \[\Phi^{+}=\left\{\theta_{\chi}\ :\ \dfrac{\chi(tH)}{|H|}=1\right\}\] and \[\Phi^{-}=\left\{\theta_{\chi}\ :\ \dfrac{\chi(tH)}{|H|}=-1\right\}.\] If,  
\begin{enumerate}[(i)]
    \item $\theta_{\chi} \in \bb{Z}$, for all $\chi \in \irr(G\sslash H)$; and
    \item there exists $a \in \bb{Z}$ such that $\theta_{\chi} \equiv \begin{cases} a \pmod{4} & \text{if $\chi \in \Phi^{+}$} \\
a+2 \pmod{4} & \text{if $\chi \in \Phi^{-}$,}
\end{cases}$  
\end{enumerate} 
then there is perfect state transfer between vertices $xH$ and $xtH$ of $\Gamma_{D}$, for all $x \in G$. \qed 
\end{cor}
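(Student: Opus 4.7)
The plan is to derive Corollary~\ref{cor:ospst} as a direct specialization of Theorem~\ref{PSTAssoc}, taking the permutation matrix $T$ of that theorem to be $A_t$. Condition (b) of the theorem is already in hand: the preceding lemma gives that $A_t$ is a permutation matrix of order~$2$, the assumption $t\notin H$ means it has no fixed points, and the relation containing $(xH,xtH)$ is the one indexed by the double coset $HtH=tH$. Condition (a) is immediate from hypothesis~(i). The substantive part is verifying condition~(c); once it holds for the pair $(H,tH)$, it automatically extends to every pair $(xH,xtH)$ by $G$-equivariance of the matrices $A_d$.

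To match (c) with the sets $\Phi^\pm$ in the statement, I would compute the scalar by which $T=A_t$ acts on a primitive idempotent $\mbf{E}_\chi$. Specializing equation~\eqref{eq:osct} at $d=t$ and using that $t\in N_G(H)$ forces $H\cap H^t=H$, while $t^2\in H$ forces $t^{-1}H=tH$, yields $A_t\,\mbf{E}_\chi=(\chi(tH)/|H|)\,\mbf{E}_\chi$. Since $A_t^2=I$, this scalar must be $\pm 1$, so $\Phi^+$ and $\Phi^-$ in the statement are precisely the $(\pm 1)$-eigenspaces of $T$, as required by the theorem.

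The final step is to convert the mod-$4$ hypothesis~(ii) into the $2$-adic valuation condition of Theorem~\ref{PSTAssoc}(c). The trivial character satisfies $\chi(tH)/|H|=1$, so the degree $\theta_0$ of $\Gamma_D$ lies in $\Phi^+$, and (ii) yields $\theta_0\equiv a\pmod 4$. Then (ii) gives $\theta_0-\theta_\chi\equiv 0\pmod 4$ for $\chi\in\Phi^+$ and $\theta_0-\theta_\chi\equiv 2\pmod 4$ for $\chi\in\Phi^-$. Assuming $\Phi^-$ is nonempty, this pins down $v_2(g)=1$ and produces the split $v_2(\theta_0-\theta_\chi)=v_2(g)$ on $\Phi^-$ versus $v_2(\theta_0-\theta_\chi)>v_2(g)$ on $\Phi^+$ required by~(c); Theorem~\ref{PSTAssoc} then delivers PST at time $\tau=\pi/g$. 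I do not foresee any genuine obstacle: the argument is essentially bookkeeping parallel to Corollary~\ref{cor:pstccs}, and the only place the orbital-scheme structure enters nontrivially is the simplification of~\eqref{eq:osct} at $d=t$, which simultaneously invokes $t\in N_G(H)$ and $t^2\in H$.
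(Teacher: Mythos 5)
Your proposal is correct and is essentially the paper's own (one-sentence) proof: the authors simply invoke Theorem~\ref{PSTAssoc} with $T:=A_t$, relying on the preceding lemma for condition (b) and on the specialization of \eqref{eq:osct} at $d=t$ (using $H\cap H^t=H$ and $t^{-1}H=tH$) exactly as you do. The one caveat is that the valuation split you derive ($v_2(\theta_0-\theta_\chi)=v_2(g)$ on $\Phi^-$ and $>v_2(g)$ on $\Phi^+$) is the transpose of what Theorem~\ref{PSTAssoc}(c) literally states, but yours is the correct version --- the theorem's $\Phi^+$/$\Phi^-$ roles are evidently swapped there, since $\theta_0\in\Phi^+$ while $v_2(\theta_0-\theta_0)=\infty$ --- so your argument stands and matches how the corollary is actually used later in the paper.
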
  

\section{Complex representations of $\gl(2,q)$, $\GU(2,q)$ and $\sli(2,q)$.}\label{sec:repandcc}
In this section, we describe the irreducible characters of $\gl(2,q)$ and $\GU(2,q)$ and $\sli(2,q)$.  The facts about $\GU(2,q)$ and $\sli(2,q)$ will be needed only for
our study of the Cayley graphs on the respective groups,
while those for $\gl(2,q)$ will be needed for both the Cayley graphs and the orbital scheme graphs.
 If $\F$ is a field, we shall denote by $\widehat{\F^\times}$ the group of multiplicative characters $\F^\times$ with complex values.

\subsection{Characters of $\gl(2,q)$}\label{sec:repandccgl}
Let $q$ be an odd prime power.
In this section, we describe conjugacy classes and complex irreducible representations of the finite general linear group $\gl(2,q)$. 
The irreducible complex representations of this group were first classified by Schur \cite{Schur1907} and Jordan \cite{jordan1907group}. For a modern exposition, we refer the reader to \cite[\S~5.2]{fulton2013representation}.

We first start by describing the conjugacy classes of $\gl(2,q)$.  
 Let $q$ be an odd prime power and let $\F_{q}$ and $\F_{q^{2}}$ denote the finite fields of sizes $q$ and $q^{2}$ respectively. 
Fix $\Delta$ to be a fixed non-square in $\F_{q}$, and let $\sqrt{\Delta}\in\F_{q^2}$ denote a fixed root of $t^{2}-\Delta \in \F_{q}[t]$.  The set $\{1,\sqrt{\Delta}\}$ is a basis for $\F_{q^{2}}$ over $\F_{q}$. Given $z \in \F_{q^{2}}$, let $x_{z},y_{z} \in \F_{q}$ be such that $z=x_{z} + y_{z} \sqrt{\Delta}$.

Given $x,y \in \F_{q}^{\times}$ with $x\neq y$, define $c_{1}(x) :=\begin{bmatrix}
x & 0 \\
0 & x
\end{bmatrix} $, $c_{2}(x) :=\begin{bmatrix}
x & 1 \\
0 & x
\end{bmatrix} $, $c_{3}(x,y) := \begin{bmatrix}
x & 0 \\
0 & y
\end{bmatrix} $. Given $z \in \F_{q^{2}} \setminus \F_{q}$, define $c_{4}(z):= \begin{bmatrix}
x_{z} & \Delta y_{z} \\
y_{z} & x_{z}
\end{bmatrix} $. A central element in $G$ is necessarily one of $\{c_{1}(x)\ :\ x \in \F_{q}^{\times}\}$. A non-central element with exactly one eigenvalue $x \in \F_{q}^{\times}$ is conjugate to $c_{2}(x)$. A non-central element with two distinct eigenvalues $x,y \in \F_{q}^{\times}$ is conjugate to $c_{3}(x,y)$. Finally, an element with no eigenvalues in $\F_{q}^{\times}$ is conjugate to one of $\{c_{4}(z)\ :\ z \in \F_{q^{2}} \setminus \F_{q}\}$.

Irreducible characters of $\gl(2,q)$ can be split into four categories, by degree. We first describe the linear characters.

Let $\mrm{det}: \gl(2,q) \to \F_{q}^{\times}$ denote the determinant map. Every linear character of $\gl(2,q)$ is of the form $\lambda\circ\mrm{det}$,
where $\lambda$ is a multiplicative character
of $\F_{q}^\times$. For ease of writing, we shall sometimes abuse notation to identify $\lambda\circ\mrm{det}\in \mrm{Irr}(G)$ with $\lambda\in\widehat{\F_{q}^\times}$.

Next, we describe the so called parabolically induced characters.  Let $\bo(q)$ denote the subgroup of upper triangular matrices in $\gl(2,q)$. Given $\chi_{1},\chi_{2} \in \widehat{\F_{q}^{\times}}$, we get a character $\chi:=[\chi_{1},\chi_{2}]$ of 
$\F_{q}^{\times}\times \F_{q}^{\times}$ and, by inflation,
a character of $\bo(q)$, which we denote by the same symbol, given  by 
\begin{equation}\label{eq:linchpar}
[\chi_1, \chi_2]\left( \begin{bmatrix}
x & z \\
0 & y
\end{bmatrix} \right) =\chi_{1}(x)\chi_{2}(y).
\end{equation} We obtain a character $I[\chi_{1},\chi_{2}]:= \mrm{Ind}_{\bo(q)}^{\gl(2,q)}(\chi)$ of $\gl(2,q)$, via induction. It is well known that, provided $\chi_{1} \neq \chi_{2}$, the character $I[\chi_{1},\chi_{2}]$ is an irreducible character of degree $q+1$. Such characters of degree $q+1$ will be referred to as principal series characters.
It is also known that
 for  $\chi \in \widehat{\F_{q}^{\times}}$  the associated linear character $\chi:=\chi\circ\mrm{det}$ of $G$ is an irreducible summand of $I[\chi,\chi]$ and that 
 $S_{\chi}:=I[\chi,\chi]-\chi$ is an irreducible character of degree $q$. We shall refer to such characters as Steinberg type characters.
 
A character $\pi \in \irr(\gl(2,q))$ is said to be cuspidal if $\left\langle \pi, I[\chi] \right\rangle =0$, for all $\chi \in \irr\left( \F_{q}^{\times} \times \F_{q}^{\times} \right)$. There are $q(q-1)/2$ cuspidal characters and each one of them has degree $q-1$. Cuspidal characters are indexed by $\{\mu \in \mrm{Irr}(\F_{q^{2}}^{\times})\ :\ \mu \neq \mu^{q}\}$.

The character table of $\gl(2,q)$ is given as Table~\ref{tab:gl2qct}.
\begin{table}[ht]
\centering
\begin{tabular}{|c||c || c | c | c | c|}
\hline
& & $c_{1}(x)$ & $c_{2}(x)$ & $c_{3}(x,y)$ & $c_{4}(z)$ \\
\hline
& Size & $1$ & $q^{2}-1$ & $q(q+1)$ & $q(q-1)$ \\
\hline\hline
Character & Type & & & &
\\
\hline
$\lambda$ & Linear & $\lambda(x^{2})$ & $\lambda(x^{2})$ & $\lambda(xy)$ & $\lambda(z^{q+1})$ \\
\hline
$S_{\lambda}$  & Steinberg & $q\lambda(x^{2})$ & $0$ & $\lambda(xy)$ & $-\lambda(z^{q+1})$ \\
\hline
$\pi[\mu]$ & Cuspidal & $(q-1)\mu(x)$ & $-\mu(x)$ & $0$ & $-\mu(z)-\mu(z^{q})$ \\
\hline
$I[\chi_{1},\chi_{2}]$ & Principal series & $(q+1)\chi_{1}(x)\chi_{2}(x)$ & $\chi_{1}(x)\chi_{2}(x)$ & $\chi_{1}(x)\chi_{2}(y) + \chi_{1}(y)\chi_{2}(x)$ & $0$ \\
\hline
\end{tabular}
\caption{Character table of $\gl(2,q)$.}
\label{tab:gl2qct}
\end{table}
\subsection{Characters of $\GU(2,q)$}\label{sec:gu2c}
Let $q$ be an odd prime power. Fix a hermitian form on $\bb{F}_{q^{2}}^{2}$. By $\GU(2,q)$, we mean the subgroup of form-preserving matrices in $\gl(2,q^2)$. In this section, we describe the complex irreducible characters of $\GU(2,q)$. We start by defining $E$ to be the unique subgroup of $\bb{F}_{q^{2}}^{\times}$, of index $q-1$.

The conjugacy classes and irreducible characters of $\GU(2,q)$ were characterized by Ennola \cite{ennolaconjugacy, Ennola}. Ennola proved that $g,h \in GU(2,q)$ are conjugate if and only if they are similar as matrices, and he also determined the rational canonical forms of elements in $\GU(2,q)$. In other words, a $\GU(2,q)$ conjugacy class is uniquely determined by the rational cannonical form of its elements. The information about various conjugacy classes and their rational canonical form is given in Table~\ref{tab:gucc}.

\begin{table}[h]
    \centering
    \begin{tabular}{|c|c|c|c|}
    \hline
         Class & Rational form  & Size & Number of classes of this type   \\
          \hline
          $C_{1}(x)$, $x\in E$  & $\begin{bmatrix}
              x &0 \\
              0 & x
          \end{bmatrix} $ & $1$  & $q+1$\\
          \hline
          $C_{2}(x)$, $x\in E$  & $\begin{bmatrix}
              x & 1 \\
              0 & x
          \end{bmatrix} $ & $q^{2}-1$ & $q+1$ \\ 
          \hline
          $C_{3}(x,y)$, $x, y\in E$, $x\neq y$  & $\begin{bmatrix}
              x & 0 \\
              0 & y
          \end{bmatrix} $ & $q(q-1)$ & $\dfrac{q(q+1)}{2}$ \\
          \hline
          $C_{4}(z)$, $z\in \bb{F}_{q^{2}}^{\times} \setminus E$  & $\begin{bmatrix}
              z & 0 \\
              0 & z^{-q}
          \end{bmatrix} $ & $q(q+1)$ & $\dfrac{q^{2}-q-2}{2}$ \\
          \hline
    \end{tabular}
    \caption{Conjugacy Classes of $\GU(2,q)$}
    \label{tab:gucc}
\end{table}

Characters of $\GU(2,q)$ were also first described by Ennola \cite{Ennola}. The construction of characters is very similar to those of $\gl(2,q)$. We now describe the irreducible characters. 

Every linear character of $\GU(2,q)$ is of the form  $\lambda \circ det$, where $\lambda$ be a linear character of $E$. For ease of notation, we use $\lambda$ to denote $\lambda \circ det$. For every linear character $\lambda$, there is an irreducible character $S_{\lambda}$ of degree $q$. Every irreducible character of degree $q$ is of this form.
Given distinct linear characters $\lambda_{1},\lambda_{2}$ of $E$, there is a character $I[\lambda_{1},\lambda_{2}] \in \irr(\GU(2,q))$, of degree $q-1$. Every irreducible character of degree $q-1$ is of this form. Given a linear character $\mu$ of $\bb{F}_{q^{2}}^{\times}$ with $\mu \neq \mu^{-q}$ (that is, $\mu|_{\bb{F}_{q}^{\times}}$ is non-trivial), there is a character $\pi[\mu]$ of degree $q+1$. Every irreducible character of degree $q+1$ is of this form. Any character in $\irr(G)$ must be one of the irreducible characters described in this paragraph. The character table of $\GU(2, q)$ is given in Table~\ref{tab:guct}.

\begin{table}[h]
    \centering
    \begin{tabular}{|c||c|c|c|c|}
    \hline
         & $C_{1}(x)$ & $C_{2}(x)$ & $C_{3}(x,y)$ & $C_{4}(z)$ \\
         \hline
       $\lambda$  & $\lambda(x^{2})$ & $\lambda(x^{2})$ & $\lambda(xy)$ & $\lambda(z^{1-q})$ \\
         \hline
       $S_{\lambda}$  & $q\lambda(x^{2})$ & $0$ & -$\lambda(xy)$ & $\lambda(z^{1-q})$ \\
       \hline
       $I[\lambda_{1},\lambda_{2}]$ & $(q-1)\lambda_{1}(x)\lambda_{2}(x)$ & $-\lambda_{1}(x)\lambda_{2}(x)$ & $-(\lambda_{1}(x)\lambda_{2}(y)+\lambda_{2}(x)\lambda_{1}(y))$ & $0$ \\
       \hline
       $\pi[\mu]$ & $(q+1)\mu(x)$ & $\mu(x)$ & $0$ & $\mu(z)+\mu(z^{-q})$\\
       \hline
    \end{tabular}
    \caption{$\GU(2,q)$ Character Table}
    \label{tab:guct}
\end{table}
 
\subsection{Characters of $\sli(2,q)$.}\label{sec:sl2cc}
Let $q$ be an odd prime power. In this section, we describe the character table of $\sli(2,q)$.
The irreducible complex representations of $\sli(2,q)$ were first characterized by Schur \cite{Schur1907} and Jordan \cite{jordan1907group}. We can obtain irreducible representations of $\sli(2,q)$ as irreducible summands of restrictions of irreducible representations of $\gl(2,q)$. In fact, all complex irreducible representations of $\sli(2,q)$ can be obtained this way. More details on this approach can be found in many standard texts such as \cite{fulton2013representation}. 

The conjugacy classes of $\sli(2,q)$ are well known (see for instance  \cite[\S~5.2]{fulton2013representation}). Let $\Delta$ be a fixed non-square in $\bb{F}_{q^{\times}}$. Conjugacy classes of $\sli(2,q)$ are described in Table~\ref{tab:sl2cc}.

\begin{table}[h!]
    \centering
    \begin{tabular}{|c|c|}
    \hline
       Representative  &  Size of the class   \\
       \hline
        $I$ & $1$ \\ 
        \hline
        $-I$ & $1$ \\
 \hline
        $d_{2}(1,1):= \begin{bmatrix}
            1 & 1 \\
            0 & 1
        \end{bmatrix}$  & $\dfrac{q^{2}-1}{2}$ \\
 \hline
        $d_{2}(1,\Delta):= \begin{bmatrix}
            1 & \Delta \\
            0 & 1
        \end{bmatrix}$  & $\dfrac{q^{2}-1}{2}$ \\
        \hline
        $d_{2}(-1,1):= \begin{bmatrix}
            -1 & 1 \\
            0 & -1
        \end{bmatrix}$  & $\dfrac{q^{2}-1}{2}$ \\
        \hline
        $d_{2}(-1,\Delta):= \begin{bmatrix}
        -1 & \Delta \\
            0 & -1
        \end{bmatrix}$  & $\dfrac{q^{2}-1}{2}$ \\
        \hline 
        $d_{3}(x):= \begin{bmatrix}
            x & 0 \\
            0 & x^{-1}
        \end{bmatrix}$, $x \in \bb{F}_{q}^{\times}\setminus \{\pm 1\}$ & $q(q+1)$ \\
        \hline 
        $d_{4}(x, y):= \begin{bmatrix}
            x & \Delta y \\
            y & x
        \end{bmatrix}$, $(x,y) \in \bb{F}_{q} \times \bb{F}_{q}^{\times}$ and $x^{2}-y^{2}\Delta=1$ & $q(q-1)$\\
        \hline
    \end{tabular}
    \caption{Conjugacy classes of $\sli(2,q)$}
    \label{tab:sl2cc}   
\end{table}

We now describe the irreducible characters of $\sli(2,q)$. Let $E$ denote the unique subgroup of index $q-1$ in $\bb{F}_{q^{2}}^{\times}$. Let $\mu_{s}$ denote a linear character of $E$, of order $2$. We extend $\mu_{s}$ to a character of $\bb{F}_{q^{2}}^{\times}$ and denote this extension by $\mu_{s}$ as well. Consider the restriction $U:=\pi[\mu_{s}]|_{\sli(2,q)}$ of the cuspidal character $\pi[\mu_{s}]$. Elementary computations show that $\left\langle U, U \right\rangle =2$. It is known (c.f \cite[\S~5.2]{fulton2013representation}) that $U$ is a direct sum of two distinct irreducible characters of the same degree, $\dfrac{q-1}{2}$. We denote the irreducible summands of $U$ by $U^{+}$ and $U^{-}$. We just described two irreducible characters of degree $\dfrac{q-1}{2}$.

Let $\zeta$ denote the quadratic character of $\bb{F}_{q}^{\times}$ and consider the restriction $Z:=I[\zeta,1]|_{\sli(2,q)}$ of the irreducible character $I[\zeta, 1]$ of $\gl(2,q)$. Just as above, $Z$ is a sum of two distinct irreducible characters of the same degree, $\dfrac{q+1}{2}$. We denote the irreducible summands of $Z$ by $Z^{+}$ and $Z^{-}$. We just described two irreducible characters of degree $\dfrac{q+1}{2}$.

If $\mu$ is a linear character of $\bb{F}_{q^{2}}^{\times}$, with $\mu \neq \mu^{q}$ and $\mu|_{E} \neq \mu_{s}|_{E}$, then $\pi[\mu]|_{\sli(2,q)}$ is an irreducible character. We also note that given another is a linear character $\rho$ of $\bb{F}_{q^{2}}^{\times}$, with $\rho \neq \rho^{q}$, then $\pi[\mu]|_{\sli(2,q)}=\pi[\rho]|_{\sli(2,q)}$ if and only if $\mu|_{E} \in \{\rho|_{E},\rho^{q}|_{E}\}$. Given a linear character $\mu$ of $E$, with $\mu \neq \mu^{-1}$, we denote $P[\mu]$, to be the restriction of $\pi[\tilde{\mu}] \in \irr(\gl(2,q))$ to $\sli(2,q)$, where $\tilde{\mu}$ is any extenstion of $\mu$ to $\bb{F}_{q^{2}}^{\times}$. We just described $\dfrac{q-1}{2}$ irreducible characters of degree $(q-1)$.   

Given a distinct linear character $\lambda$ of $\bb{F}_{q}^{\times}$, with $\lambda^2 \neq 1$ the character $I[\lambda, 1]$, restricts to an irreducible character of $\sli(2,q)$. We define $I[\lambda] :=I[\lambda, 1]|_{\sli(2,q)}$. These $I[\lambda]$'s form a set of $\dfrac{q-3}{2}$ irreducibles of degree $q+1$.

Consider the $2$-transitive action of $\sli(2,q)$ on the $1$-dimensional susbspaces of $\bb{F}_{q}^{2}$. Let $\sigma$ be the degree $q$  irreducible character, such that $\sigma+1$ is the permutation character associated with this action. We note that $\sigma$ is the restriction of any Stienberg type character of $\gl(2,q)$. We have so far described $q+3$ distinct irreducible characters of $\sli(2,q)$. Including the trivial character, we have $q+4$ irreducible characters, which is the same as the number of conjugacy classes, and therefore, these are all the irreducible characters. For our purposes, we do not require the entire character table, but only require the character values on $\pm I$, $d_{2}(\pm 1, 1)$ and $d_{2}(\pm 1, \Delta)$. Character values of any character in $\irr(\sli(2,q)) \setminus \{U^{\pm}Z^{\pm}\}$ can be read off the character table \ref{tab:gl2qct} of $\gl(2,q)$. The character values of $U^{\pm}$, $Z^{\pm}$ take a bit more work, but are well-known. The values taken by characters in $\{U^{\pm},Z^{\pm}\}$ on elements of $ \{d_{2}(\pm 1, 1),d_{2}(\pm 1, \Delta)\}$ are given in Table~\ref{tab:sl}.

\begin{table}[ht]
    \centering
    \begin{tabular}{|c|c|c|c|c|}
    \hline
         & $d_{2}(1,1)$ & $d_{2}(1,\Delta)$ & $d_{2}(-1,1)$ & $d_{2}(-1,\Delta)$ \\
         \hline
         $U^{\pm}$ & $\frac{-1 \pm \sqrt{\zeta(-1)q}}{2}$ & $\frac{-1 \mp \sqrt{\zeta(-1)q}}{2}$  & $\frac{ \zeta(-1) \mp \sqrt{\zeta(-1)q}}{2}$ & $\frac{ \zeta(-1) \pm \sqrt{\zeta(-1)q}}{2}$   \\
         \hline
         $Z^{\pm}$ & $\frac{1 \pm \sqrt{\zeta(-1)q}}{2}$ & $\frac{1 \mp \sqrt{\zeta(-1)q}}{2}$  & $\frac{ \zeta(-1) \pm \sqrt{\zeta(-1)q}}{2}$ & $\frac{ \zeta(-1) \mp \sqrt{\zeta(-1)q}}{2}$   \\ 
         \hline
    \end{tabular}
    \caption{Partial Character Table of $\sli(2,q)$}
    \label{tab:sl}
\end{table}

\section{Proof of Theorem~\ref{thm:cay}}\label{sec:cayproof}
Theorem~\ref{thm:cay} considers graphs 
belonging to the
conjugacy class scheme. We prove this using Corollary~\ref{cor:pstccs}. 
\subsection{PST in $\gl(2,q)$}\label{sub:glpst}
Let $q$ be an odd prime power and let $G=\gl(2,q)$. Then $G$ has a unique central involution, namely, $t := \begin{bmatrix}
    -1 & 0 \\
    0 & -1
\end{bmatrix}$. 
Let $S$ denote the set of squares in $\bb{F}_{q}^{\times}$ and $N:=\bb{F}_{q}^{\times} \setminus S$. 

We consider the set $\mfk{S}$ of elements which are conjugate to an element of the set
\[\{c_{3}(1,-1)\} \bigcup \left(\bigcup\limits_{x \in \bb{F}_{q}^{\times}}c_{2}(x) \right) \bigcup \left( \bigcup\limits_{\substack{z \in \bb{F}_{q^{2}}^\times\setminus \bb{F}_{q}^\times \\ z^{q+1}\in \{1\} \cup N}} c_{4}(z)\right).\]
Here, $c_{2}(x)$, $c_{3}(x,y)$, and $c_{4}(z)$ are as defined in \S~\ref{sec:repandccgl}. 
We will prove that the Cayley graph $\Gamma := \mathrm{Cay}\left(G,\mfk{S} \right)$ --- a graph belonging to  the conjugacy class scheme over $G$ --- possesses PST. This shall be done by proving that $\Gamma$ satisfies the premise of Corollary~\ref{cor:pstccs}. We do this in a series of lemmas.

\begin{lemma}\label{lem:gl2inte}
The eigenvalues of $\Gamma$ are integral.
\end{lemma}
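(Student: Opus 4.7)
The plan is to verify hypothesis (i) of Corollary~\ref{cor:pstccs}: that $\theta_\chi := \chi(\mfk{S})/\chi(1)$ is an integer for every $\chi \in \irr(G)$. First I would decompose $\mfk{S}$ as a disjoint union of three class-closed blocks: $A_1$, the single class of $c_3(1,-1)$ of size $q(q+1)$; $A_2 := \bigsqcup_{x \in \F_q^\times}(\text{class of } c_2(x))$, a union of $q-1$ classes each of size $q^2-1$; and $A_3 := \bigsqcup (\text{class of } c_4(z))$, indexed by Galois orbits $\{z, z^q\}$ of elements $z \in Z := \{z \in \F_{q^2}^\times \setminus \F_q^\times : z^{q+1} \in \{1\} \cup N\}$, each such class having size $q(q-1)$. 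This gives
\[
\chi(\mfk{S}) = q(q+1)\,\chi(c_3(1,-1)) + (q^2-1)\sum_{x \in \F_q^\times}\chi(c_2(x)) + \tfrac{q(q-1)}{2}\sum_{z \in Z}\chi(c_4(z)),
\]
and each ingredient can be read off directly from Table~\ref{tab:gl2qct}.

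Next I would evaluate the auxiliary sums that appear: $\sum_{x \in \F_q^\times}\lambda(x^2)$ and $\sum_{x \in \F_q^\times}(\chi_1\chi_2)(x)$, each equal to $q-1$ or $0$ by standard orthogonality; the norm-weighted sum $\sum_{z \in Z}\lambda(z^{q+1})$; and, for cuspidal parameter $\mu$, the sum $\sum_{z \in Z}\mu(z)$. For the norm-weighted sum I would partition $Z$ by the value of $z \mapsto z^{q+1}$: the norm-$1$ fiber contributes $q-1$ elements (everything in that fiber except $\pm 1 \in \F_q^\times$), while each non-square norm fiber contributes the full $q+1$ elements, yielding $\sum_{z \in Z}\lambda(z^{q+1}) = (q-1) + (q+1)\sum_{n \in N}\lambda(n)$, in which $\sum_{n \in N}\lambda(n)$ vanishes unless $\lambda \in \{\mathbf{1},\zeta\}$ (with $\zeta$ the quadratic character). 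For the cuspidal sum, orthogonality on $\ker(z \mapsto z^{q+1})$ annihilates every non-square fiber, since $\mu \neq \mu^q$ forces $\mu$ to be non-trivial on that kernel, and collapses the norm-$1$ fiber as well, leaving $\sum_{z \in Z}\mu(z) = -1 - \mu(-1)$.

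With these sums in hand, the four character families of $\gl(2,q)$ split into short subcase lists depending on whether $\lambda^2 = \mathbf{1}$, $\chi_1\chi_2 = \mathbf{1}$, $\mu|_{\F_q^\times} = \mathbf{1}$, or $\mu(-1) = \pm 1$. In each subcase $\chi(\mfk{S})$ becomes a concrete polynomial in $q$, and divisibility by $\chi(1) \in \{1, q-1, q, q+1\}$ is immediate from the built-in factors: the coefficient $q(q+1)$ in front of $A_1$ supplies $q$ and $q+1$; the coefficient $(q^2-1) = (q-1)(q+1)$ in front of $A_2$ supplies $q-1$ and $q+1$; the coefficient $q(q-1)$ in front of $A_3$ supplies $q$ and $q-1$; and the parity of $q$ absorbs the stray $\tfrac12$ in the $A_3$ coefficient via $(q-1)^2 \equiv 0 \pmod 4$.

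The main obstacle is the clean evaluation of $\sum_{z \in Z}\mu(z)$ for cuspidal $\mu$; this is where the subtlety of $Z$ enters, and the cancellations require the norm fibration together with orthogonality on $\ker(z \mapsto z^{q+1})$. The remaining subcase-by-subcase integrality checks are elementary once these sums are established, and in fact the specific combination of classes defining $\mfk{S}$ has been chosen precisely so that the class-size factors cancel each possible value of $\chi(1)$ on the nose.
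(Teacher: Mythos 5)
Your proof is correct, but it takes a much more computational route than the paper. The paper disposes of this lemma in two lines: for any conjugacy class $C$ and $\chi\in\irr(G)$, the central character value $|C|\chi(c)/\chi(1)$ is an algebraic integer, and since $\mfk{S}$ is a union of classes closed under the power maps $g\mapsto g^k$ with $\gcd(k,|G|)=1$ (the $c_2(x)$ classes are taken over \emph{all} $x\in\F_q^\times$, and the defining condition $z^{q+1}\in\{1\}\cup N$ on the $c_4(z)$ classes is Galois-stable because $k$ must be odd), each $\chi(\mfk{S})/\chi(1)$ is a rational algebraic integer, hence an integer. You instead compute $\chi(\mfk{S})$ explicitly for each of the four character families and verify divisibility by $\chi(1)\in\{1,q-1,q,q+1\}$ by hand; your class-size bookkeeping, your evaluation of $\sum_{z\in Z}\lambda(z^{q+1})=(q-1)+(q+1)\lambda(N)$ via the norm fibration, and your cuspidal sum $\sum_{z\in Z}\mu(z)=-1-\mu(-1)$ all agree with the paper's Lemmas~\ref{lem:square} and~\ref{lem:gl2csums}, and the resulting eigenvalues match Lemmas~\ref{lem:lin}--\ref{lem:pseries}. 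What your approach buys is that the explicit eigenvalues fall out as a byproduct, and these are needed anyway for the mod-$4$ analysis in condition (ii) of Corollary~\ref{cor:pstccs}; what the paper's approach buys is a two-line proof of integrality that is independent of the detailed character sums and cleanly separates the integrality question from the congruence computations. The only caveat is that your argument, as a proof of \emph{this} lemma alone, is considerably heavier than necessary, and you should be explicit that ``divisible by $\chi(1)$'' presupposes that $\chi(\mfk{S})$ has already been shown to be a rational integer in each subcase, which your case analysis does deliver.
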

\begin{proof}
  It is well known that, given a conjugacy class $C$ and an irreducible character $\chi$, $\dfrac{\chi(C)}{\chi(1)}$ is an algebraic integer. As $\mfk{S}$ is a union of rational conjugacy classes, it follows that, for any $\chi \in \irr(G)$, $\theta_{\chi}= {\chi(S)}/ \chi(1)$ is an integer.    
\end{proof}

Our next step will be to show that all eigenvalues in $\Phi^{+}$ are congruent to $2\pmod{4}$ and the ones in $\Phi^{-}$ are $0\pmod{4}$. We first prove some auxiliary lemmas. The following result is an immediate consequence of orthogonality of characters.

\begin{lemma}\label{lem:square}
Let $\lambda$ be a linear character of $\bb{F}_{q}^{\times}$.
\[\sum\limits_{x \in \bb{F}_{q}^{\times}} \lambda(x^{2})= 2\lambda(S) = \begin{cases}
    q-1 & \text{if $S \subset \mrm{Ker}(\lambda)$} \\
    0 & \text{otherwise.}
\end{cases} \]    \qed
\end{lemma}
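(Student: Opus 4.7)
The plan is to rewrite the sum by observing how the squaring map acts on $\bb{F}_q^\times$, then apply the first orthogonality relation for characters of the subgroup of squares.

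First I would note that the squaring map $x \mapsto x^2$ on $\bb{F}_q^\times$ is exactly $2$-to-$1$ onto $S$, since $q$ is odd so $x$ and $-x$ are distinct and have the same square, while no other element shares the square. Therefore
\[
\sum_{x \in \bb{F}_q^\times} \lambda(x^2) \;=\; 2\sum_{s \in S}\lambda(s) \;=\; 2\lambda(S),
\]
giving the first claimed equality immediately.

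Next I would use that $S$ is a subgroup of $\bb{F}_q^\times$ of index $2$, so $\lambda|_S$ is a character of $S$. By the first orthogonality relation applied to the abelian group $S$,
\[
\sum_{s \in S}\lambda(s) \;=\; \begin{cases} |S| = \tfrac{q-1}{2} & \text{if $\lambda|_S$ is trivial, i.e.\ $S\subset \mrm{Ker}(\lambda)$,}\\ 0 & \text{otherwise.}\end{cases}
\]
Multiplying by $2$ yields the stated values $q-1$ or $0$.

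There is no real obstacle here; the lemma is essentially bookkeeping via orthogonality. The only thing to be careful about is the factor of $2$ in the multiplicity of the squaring map, which uses that $q$ is odd (so $-1\neq 1$), a hypothesis already in force throughout the section.
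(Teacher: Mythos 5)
Your proof is correct and follows the same route the paper has in mind: the paper gives no written proof, simply noting the lemma is an immediate consequence of orthogonality of characters, and your argument (the squaring map is $2$-to-$1$ onto $S$, then orthogonality for $\lambda|_S$ on the index-$2$ subgroup $S$) is exactly that standard computation, with the factor of $2$ justified by $q$ being odd.
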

 
 The norm map $\mrm{Nm} : \bb{F}_{q^{2}}^{\times} \to \bb{F}_{q}^{\times}$ is a surjective homomorphsim satisfying $N(z)=z^{q+1}$, for all $z \in \bb{F}_{q^{2}}^{\times}$. The kernel $E$ of this map is the unique subgroup of $\bb{F}_{q^{2}}^{\times}$, of order $q+1$. Given $x \in \bb{F}_{q}^{\times}$, the set $E_{x}:=\mrm{Nm}^{-1}(x)$ is a coset of $E$. For any $z \in \bb{F}_{q}$, we note that $z^{q+1}=z^{2}$. Therefore, $E_{x} \cap \bb{F}_{q}^{\times} \neq \emptyset$ if and only if $x $ is a square in $\bb{F}_{q}^{\times}$.  Moreover, we have
 \begin{subequations}\label{eq:Ecoset}
 \begin{align}
     |E_{x} \cap \bb{F}_{q}^{\times}| &= \begin{cases} 2 & \text{if $x\in S$} \\
     0 & \text{otherwise.}
     \end{cases} \\
     E \cap \bb{F}_{q}^{\times} & = \{1,-1\}.
  \end{align}   
 \end{subequations}
Therefore, we have 

\begin{equation}\label{eq:set4}
    \{z \in \bb{F}_{q^{2}}^\times\setminus \bb{F}_{q}^\times \ : \ z^{q+1}\in \{1\} \cup N\} =   \left(\bigcup\limits_{x \in N} E_{x} \right) \cup (E\setminus \{ 1, -1\}). 
\end{equation}

We now prove one last technical result before going back to our main result.

\begin{lemma}\label{lem:gl2csums}
    Let $\lambda$ be a non-trivial linear character of $\bb{F}_{q}^{\times}$ and let $\mu$ be linear character of $\bb{F}_{q^{2}}^{\times}$ with $\mu \neq \mu^{q}$. 
    Then, we have 
    \begin{enumerate}
        \item $\sum\limits_{\substack{z \in \bb{F}_{q^{2}}^\times\setminus \bb{F}_{q}^\times \\ z^{q+1}\in \{1\} \cup N}} \lambda(z^{q+1})= (q-1) -(q+1)\lambda(S)$ 
        \item $\sum\limits_{\substack{z \in \bb{F}_{q^{2}}^\times\setminus \bb{F}_{q}^\times \\ z^{q+1}\in \{1\} \cup N}}\mu(z) = -(1+ \mu(-1))$
    \end{enumerate}   
\end{lemma}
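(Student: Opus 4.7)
The plan is to decompose the index set using equation~\eqref{eq:set4}, which writes
\[\{z\in \bb{F}_{q^2}^\times\setminus \bb{F}_q^\times : z^{q+1}\in\{1\}\cup N\} = \Bigl(\bigcup_{x\in N} E_x\Bigr)\cup (E\setminus\{1,-1\}),\]
where $E_x = \mrm{Nm}^{-1}(x)$ is the coset of $E$ in $\bb{F}_{q^2}^\times$ corresponding to $x\in \bb{F}_q^\times$ under the norm. Each $E_x$ has cardinality $q+1$, and a key observation is that $E_x\cap \bb{F}_q^\times=\emptyset$ for $x\in N$ and $E\cap \bb{F}_q^\times=\{1,-1\}$ by \eqref{eq:Ecoset}, so the decomposition above is actually a disjoint union of subsets of $\bb{F}_{q^2}^\times\setminus \bb{F}_q^\times$. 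On each piece the two characters $\lambda(z^{q+1})$ and $\mu(z)$ will behave very differently, which is why the two parts of the lemma separate cleanly.

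For part (1), I would note that $z\mapsto \lambda(z^{q+1})$ is constant on each coset $E_x$ with value $\lambda(x)$, so
\[\sum_{z\in E_x}\lambda(z^{q+1}) = (q+1)\lambda(x),\qquad \sum_{z\in E\setminus\{1,-1\}}\lambda(z^{q+1}) = (q+1)-2 = q-1.\]
Summing the first identity over $x\in N$ and using that $\lambda$ is a nontrivial linear character of $\bb{F}_q^\times$ (so $\lambda(\bb{F}_q^\times)=0$, i.e.\ $\lambda(N)=-\lambda(S)$), yields $(q+1)\lambda(N) = -(q+1)\lambda(S)$. Adding the $q-1$ contribution from $E\setminus\{1,-1\}$ gives the claimed value $(q-1)-(q+1)\lambda(S)$.

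For part (2), the crucial fact is that $\mu\neq\mu^q$ is equivalent to $\mu$ being nontrivial on $E=\ker(\mrm{Nm})$ (since $\mu^q(z)=\mu(z^q)$ agrees with $\mu$ on $E$ exactly when $\mu|_E$ is fixed by Frobenius, and more simply $\mu/\mu^q$ is trivial iff $\mu$ is trivial on $E$). Since $\mu|_E$ is nontrivial, for any coset $E_x$ we may pick $z_0\in E_x$ and write $\sum_{z\in E_x}\mu(z) = \mu(z_0)\sum_{y\in E}\mu(y) = 0$. Hence the entire sum over $\bigcup_{x\in N}E_x$ vanishes, and we are left with
\[\sum_{z\in E\setminus\{1,-1\}}\mu(z) = \sum_{z\in E}\mu(z) - \mu(1) - \mu(-1) = -1-\mu(-1),\]
again using $\sum_{z\in E}\mu(z)=0$.

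I do not anticipate any real obstacle: the lemma is essentially a bookkeeping exercise once the decomposition \eqref{eq:set4} is in hand and the fact $\mu\neq\mu^q\iff\mu|_E\neq\one$ is invoked. The only mild subtlety is keeping straight which character is being summed on which part of the set, and making sure $\lambda$ is viewed on $\bb{F}_q^\times$ while $\mu$ is viewed on $\bb{F}_{q^2}^\times$; the rest follows from orthogonality of characters on $\bb{F}_q^\times$ and on $E$.
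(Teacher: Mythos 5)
Your proposal is correct and follows essentially the same route as the paper: decompose the index set via \eqref{eq:set4} into the norm-fibres $E_x$ ($x\in N$) and $E\setminus\{1,-1\}$, use that $\lambda(z^{q+1})$ is constant on each fibre together with $\lambda(N)=-\lambda(S)$ for part (1), and use that $\mu\neq\mu^q$ forces $\mu|_E$ to be nontrivial (so all coset sums of $\mu$ vanish) for part (2).
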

 \begin{proof}
     Given $x \in \bb{F}_{q}^{\times}$, we recall that $E_{x}=\{z \in \bb{F}_{q^{2}}^{\times}\ :\ z^{q+1}=x\}$ is a coset of $E$, and thus a set of size $q+1$. Let $\lambda$ be a non-trivial linear character of $\bb{F}_{q}^{\times}$.
     Using \eqref{eq:Ecoset} and \eqref{eq:set4}, we have  
     \begin{align*}
          \sum\limits_{\substack{z \in \bb{F}_{q^{2}}^\times\setminus \bb{F}_{q}^\times \\ z^{q+1}\in \{1\} \cup N}} \lambda(z^{q+1})&= |E\setminus  \{1, -1\}| \lambda(1) + \sum\limits_{x \in N} |E_{x}\setminus \bb{F}_{q}^{\times}| \lambda(x) \\
          &= (q-1)\lambda(1) +(q+1) \lambda(N).
     \end{align*}
     
     Since $\lambda$ is non-trivial, by orthogonality of characters, we have $0=\lambda(\bb{F}_{q}^{\times})=\lambda(N)+\lambda(S)$, and thus $\lambda(N)=-\lambda(S)$. This proves the first equality.

Let $\mu$ be a non-trivial linear character of $\bb{F}_{q^{2}}^{\times}$, with $\mu \neq \mu^{q}$. Since $E$ is a subgroup of index $q-1$ in the cyclic group $\bb{F}_{q^{2}}^{\times}$, we see that $E :=\{z^{q-1}\ : z \in \bb{F}_{q^{2}}^{\times}\}$. Since $\mu \neq \mu^{q}$, there exists some $z \in \bb{F}_{q^{2}}^{\times}$ with $\mu(z^{q-1})=\mu^{q-1}(z) \neq 1$. Therefore, $\mu|_{E}$ is a non-trivial character and thus $\mu(E)=0$. For every $x \in \bb{F}_{q}^{\times}$, $E_{x}$ is a coset of $E$, and thus $\mu(E_{x})=0$. Using \eqref{eq:set4}, we have
\begin{align*}
    \sum\limits_{\substack{z \in \bb{F}_{q^{2}}^\times\setminus \bb{F}_{q}^\times \\ z^{q+1}\in \{1\} \cup N}} \mu(z) &= \mu(E)-\mu(\{1,-1\}) +\sum\limits_{x\in N} \mu(E_{x}) \\
   &= -(1+\mu(-1)).
\end{align*}
 \end{proof}

 We now show that $\Gamma$ possesses PST. We start by computing the eigenvalues of $\Gamma$. We begin with the eigenvalues indexed by linear characters of $G$.
 
 \begin{lemma}\label{lem:lin}
    If $\lambda$ is a linear character of $G$, then \begin{equation*}
    \theta_{\lambda} = \begin{cases} q(q+1)\lambda(-1) +q\dfrac{(q-1)^{2}}{2}  & \text{if $S\not\subseteq \mrm{Ker}(\lambda)$,} \\
      q(q+1)\lambda(-1) + (q^{2}-1)(q-1) +q\dfrac{(q-1)^{2}}{2} -q\dfrac{(q+1)(q-1)^{2}}{4} & \text{if $\lambda$ is the quadratic character,} \\
      q(q+1) + (q^{2}-1)(q-1) +q \dfrac{(q-1)^{2}}{2} + q\dfrac{(q+1)(q-1)^{2}}{4} & \text{if $\lambda$ is trivial.}
    \end{cases}
\end{equation*}
Moreover, $\theta_{\lambda}\equiv 2 \pmod{4}$ and $\theta_{\lambda} \in \Phi^{+}$.
\end{lemma}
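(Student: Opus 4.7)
The plan is to compute $\theta_\lambda = \chi_\lambda(\mfk{S})$ (using $\chi_\lambda(1) = 1$) by partitioning $\mfk{S}$ into its three constituent families of conjugacy classes, evaluating each piece via Lemmas~\ref{lem:square} and~\ref{lem:gl2csums}, and then checking the $2$-adic congruence.

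First I would use the class sizes from Table~\ref{tab:gl2qct} to write
\[
\theta_\lambda = q(q+1)\lambda(-1) + (q^2-1)\sum_{x\in\F_q^\times}\lambda(x^2) + \frac{q(q-1)}{2}\sum_{z\in X}\lambda(z^{q+1}),
\]
where $X = \{z\in\F_{q^2}^\times\setminus\F_q^\times : z^{q+1}\in\{1\}\cup N\}$. The factor $\tfrac12$ on the third sum compensates for the fact that $c_4(z)$ and $c_4(z^q)$ are $\gl(2,q)$-conjugate (contributing one class of size $q(q-1)$), while both $z$ and $z^q$ lie in $X$; note $\lambda(z^{q+1})$ is invariant under $z\mapsto z^q$. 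Lemma~\ref{lem:square} evaluates the middle sum as $2\lambda(S)$, and for non-trivial $\lambda$ Lemma~\ref{lem:gl2csums}(1) gives the third sum as $(q-1)-(q+1)\lambda(S)$; for the trivial $\lambda$, the third sum is instead $|X| = (q-1) + (q+1)(q-1)/2$, computed directly from \eqref{eq:set4} and \eqref{eq:Ecoset}. Since $\lambda|_S$ is trivial precisely when $\lambda\in\{\ind,\zeta\}$ (where $\zeta$ denotes the quadratic character), splitting into the three cases $\lambda\notin\{\ind,\zeta\}$ (with $\lambda(S)=0$), $\lambda=\zeta$ (with $\lambda(S)=(q-1)/2$), and $\lambda=\ind$, and substituting, produces the three stated formulas.

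Membership $\theta_\lambda\in\Phi^+$ is automatic for every linear $\lambda$: since $t = -I$ has $\det(t) = 1$, $\chi_\lambda(t) = \lambda(1) = 1 = \chi_\lambda(1)$. For the congruence $\theta_\lambda \equiv 2 \pmod{4}$ I would perform a short $v_2$-analysis, split according to $q \bmod 4$. Using $v_2(q^2-1) \geq 3$ for every odd $q$, both $(q^2-1)(q-1)$ and $q(q+1)(q-1)^2/4$ are divisible by $4$ in every case. Of the two remaining summands, $q(q+1)\lambda(-1)$ has $v_2 = 1$ when $q\equiv 1 \pmod{4}$ and $v_2 \geq 2$ when $q\equiv 3 \pmod{4}$, while $q(q-1)^2/2$ has $v_2 \geq 3$ when $q\equiv 1\pmod{4}$ and $v_2 = 1$ when $q\equiv 3\pmod{4}$. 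Hence in each parity exactly one of these two summands contributes $\equiv 2 \pmod{4}$ while the rest vanish mod $4$, yielding $\theta_\lambda \equiv 2 \pmod{4}$ uniformly across all three character cases. The main (minor) obstacle is this last parity bookkeeping, which becomes routine once the valuations above are recorded.
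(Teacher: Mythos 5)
Your proposal is correct and follows essentially the same route as the paper: the same decomposition of $\chi_\lambda(\mfk{S})$ over the three families of classes with the same class sizes and the factor $\tfrac12$ for the $c_4$-classes, the same appeal to Lemmas~\ref{lem:square} and~\ref{lem:gl2csums} (with the trivial character's sum handled directly as $|X|$, which is also implicitly what the paper does), and the same reduction of the congruence to $q(q+1)\lambda(-1)+q(q-1)^2/2 \pmod 4$ via the observation $8\mid q^2-1$. Your $v_2$-bookkeeping is just a more explicit version of the paper's one-line parity check, so there is nothing further to flag.
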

\begin{proof}
    Let $\lambda$ be a complex linear character of $G$. Recalling our notation from \S~\ref{sec:repandccgl}, we can also treat $\lambda$ as a complex linear character of $\bb{F}_{q}^{\times}$. We recall that there are exactly $q(q+1)$ elements conjugate to $c_{3}(1,-1)$; exactly $q^{2}-1$ elements conjugate to each $c_{2}(x)$; and exactly $q(q-1)$ elements conjugate to each $c_{4}(z)$. We also recall that $c_{2}(x)=c_{2}(y)$ if and only if $x=y$; and that $c_{4}(z)=c_{4}(w)$ if and only if $\{z,\ov{z}\} = \{w,\ov{w}\}$.  
We now have  
\begin{align*}
\theta_{\lambda} &= {\lambda(\mfk{S})}/\lambda(1) \notag \\
& = q(q+1)\lambda(c_{3}(1,-1))+(q^{2}-1) \sum\limits_{x \in \bb{F}_{q}^{\times}} \lambda(c_{2}(x))+ \dfrac{q(q-1)}{2}\sum \limits_{\substack{z \in \bb{F}_{q^{2}}^\times\setminus \bb{F}_{q}^\times \\ z^{q+1}\in \{1\} \cup N}} \lambda(c_{4}(z)) \notag\\
&=q(q+1)\lambda(-1)+(q^{2}-1) \sum\limits_{x \in \bb{F}_{q}^{\times}} \lambda(x^{2})+ \dfrac{q(q-1)}{2}\sum \limits_{\substack{z \in \bb{F}_{q^{2}}^\times\setminus \bb{F}_{q}^\times \\ z^{q+1}\in \{1\} \cup N}} \lambda(z^{q+1}). 
\end{align*}
Using Lemma~\ref{lem:square} and Lemma~\ref{lem:gl2csums}, it follows that 

\begin{equation*}
    \theta_{\lambda} = \begin{cases} q(q+1)\lambda(-1) +q\dfrac{(q-1)^{2}}{2}  & \text{if $S\not\subseteq \mrm{Ker}(\lambda)$,} \\
      q(q+1)\lambda(-1) + (q^{2}-1)(q-1) +q\dfrac{(q-1)^{2}}{2} -q\dfrac{(q+1)(q-1)^{2}}{4} & \text{if $\lambda$ is the quadratic character,} \\
      q(q+1) + (q^{2}-1)(q-1) +q \dfrac{(q-1)^{2}}{2} + q\dfrac{(q+1)(q-1)^{2}}{4} & \text{if $\lambda$ is trivial.}
    \end{cases}
\end{equation*}

As $q^{2}-1 \equiv 0\pmod{8}$, it follows that $\theta_{\lambda} \equiv q(q+1)\lambda(-1)+\dfrac{q(q-1)^{2}}{2} \pmod{4}$.
Using $\lambda(-1)=\pm 1$ and $q\equiv \pm 1 \pmod{4}$, we conclude that  $\theta_{\lambda} \equiv 2 \pmod{4}$.
Finally, we observe that $\lambda(t)=\lambda(1)=1$, and thus $\theta_{\lambda} \in \Phi^{+}$.
\end{proof}

\begin{lemma}\label{lem:st}
    If $\lambda$ is a linear character of $G$, then \begin{equation*}
    \theta_{S_{\lambda}} = \begin{cases} (q+1)\lambda(-1) +\dfrac{(q-1)^{2}}{2}  & \text{if $S\not\subseteq \mrm{Ker}(\lambda)$,} \\
      (q+1)\lambda(-1)  + \dfrac{(q-1)^{2}}{2} -\dfrac{(q+1)(q-1)^{2}}{4}& \text{if $\lambda$ is the quadratic character,} \\
      (q+1) + \dfrac{(q-1)^{2}}{2} +\dfrac{(q+1)(q-1)^{2}}{4} & \text{if $\lambda$ is trivial.}
    \end{cases}
\end{equation*} Moreover, $\theta_{S_{\lambda}}\equiv 2 \pmod{4}$ and $\theta_{S_{\lambda}} \in \Phi^{+}$.
\end{lemma}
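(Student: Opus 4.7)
The proof will follow the exact pattern of Lemma~\ref{lem:lin}: evaluate $\theta_{S_\lambda} = S_\lambda(\mfk{S})/q$ by splitting $\mfk{S}$ into its three types of conjugacy classes and reading off the character values of $S_\lambda$ from Table~\ref{tab:gl2qct}. The key simplification, compared with Lemma~\ref{lem:lin}, is that $S_\lambda$ vanishes identically on every class of the form $c_2(x)$, so the entire middle term disappears. Only the contribution from $c_3(1,-1)$ (giving $q(q+1)\lambda(-1)$) and from the classes $c_4(z)$ (giving $-\tfrac{q(q-1)}{2}\sum_z \lambda(z^{q+1})$, since the character value at $c_4(z)$ carries a minus sign) survive. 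After dividing by $q$ this yields
\[
\theta_{S_\lambda} \;=\; (q+1)\lambda(-1) \;-\; \frac{q-1}{2}\sum_{z} \lambda(z^{q+1}),
\]
where the sum ranges over $z \in \bb{F}_{q^2}^\times\setminus \bb{F}_q^\times$ with $z^{q+1} \in \{1\}\cup N$.

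I would then evaluate this sum in each of the three cases, exactly as in the previous lemma. When $\lambda$ is nontrivial on $S$, Lemma~\ref{lem:gl2csums}(1) combined with $\lambda(S) = 0$ reduces the sum to $q - 1$. When $\lambda$ is the quadratic character, $\lambda(S) = (q-1)/2$ and Lemma~\ref{lem:gl2csums}(1) again collapses the sum to a clean polynomial in $q$. When $\lambda$ is trivial, the sum is just the cardinality of the indexing set, which is read directly from \eqref{eq:Ecoset} and \eqref{eq:set4}. Substituting each evaluation into the displayed expression produces the three closed-form formulas in the statement.

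For the modular assertion I would reduce $\pmod 4$. Using $\lambda(-1) \in \{\pm 1\}$, the fact that $q+1 \equiv 2$ or $0 \pmod 4$ according as $q \equiv 1$ or $3 \pmod 4$, and the corresponding parity of $(q-1)^2/2$, a short case analysis across the three sub-cases shows that the various $\pm(q+1)\lambda(-1)$ and $\pm(q-1)^2/2$ summands always combine to the common residue $2 \pmod 4$. The membership $\theta_{S_\lambda} \in \Phi^+$ is then immediate: the central involution is $t = -I = c_1(-1)$, so $S_\lambda(t) = q\lambda(1) = S_\lambda(1)$, giving $S_\lambda(t)/S_\lambda(1) = 1$.

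I do not anticipate any serious obstacle; the computation mirrors Lemma~\ref{lem:lin} line for line and is in fact slightly shorter thanks to the vanishing on unipotent classes. The one thing to track carefully is the sign $S_\lambda(c_4(z)) = -\lambda(z^{q+1})$, which flips the contribution of the non-split semisimple classes relative to the linear-character calculation and is what forces the separate $\pmod 4$ bookkeeping in the quadratic and trivial cases.
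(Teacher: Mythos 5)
Your proposal is correct and follows essentially the same route as the paper: the identical three-term expansion of $S_\lambda(\mfk{S})/q$, the vanishing on the $c_2(x)$ classes, the sign $S_\lambda(c_4(z))=-\lambda(z^{q+1})$, the appeal to Lemma~\ref{lem:gl2csums}(1), and the same $\pmod 4$ and $\Phi^+$ arguments. One small remark: carrying out your substitution literally yields the displayed formulas with the $\frac{(q-1)^2}{2}$ and $\frac{(q+1)(q-1)^2}{4}$ terms negated relative to the statement --- but the paper's own proof has exactly the same discrepancy, and since both of those quantities contribute the same residue $\pmod 4$ with either sign, the conclusions $\theta_{S_\lambda}\equiv 2\pmod 4$ and $\theta_{S_\lambda}\in\Phi^+$ are unaffected.
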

\begin{proof}
    Given a linear character $\lambda$ of $ \bb{F}_{q}^{\times}$, consider the character $S_{\lambda}$, and the eigenvalue $\theta_{S_{\lambda}}$. We have 
\begin{align*}
\theta_{S_{\lambda}} &= {S_{\lambda}(\mfk{S})}/S_{\lambda}(1) \notag \\
& = (q+1)\lambda(c_{3}(1,-1))+\dfrac{(q^{2}-1)}{q} \sum\limits_{x \in \bb{F}_{q}^{\times}} \lambda(c_{2}(x))+ \dfrac{q(q-1)}{2q}\sum \limits_{\substack{z \in \bb{F}_{q^{2}}^\times\setminus \bb{F}_{q}^\times \\ z^{q+1}\in \{1\} \cup N}} \lambda(c_{4}(z)) \notag\\
&=(q+1)\lambda(-1)+0- \dfrac{(q-1)}{2}\sum\limits_{\substack{z \in \bb{F}_{q^{2}}^\times\setminus \bb{F}_{q}^\times \\ z^{q+1}\in \{1\} \cup N}} \lambda(z^{q+1}). 
\end{align*}
Using Lemma~\ref{lem:square} and Lemma~\ref{lem:gl2csums}, it follows that 

\begin{equation*}
    \theta_{S_{\lambda}} = \begin{cases} (q+1)\lambda(-1) +\dfrac{(q-1)^{2}}{2}  & \text{if $S\not\subseteq \mrm{Ker}(\lambda)$,} \\
      (q+1)\lambda(-1)  + \dfrac{(q-1)^{2}}{2} -\dfrac{(q+1)(q-1)^{2}}{4}& \text{if $\lambda$ is the quadratic character,} \\
      (q+1) + \dfrac{(q-1)^{2}}{2} +\dfrac{(q+1)(q-1)^{2}}{4} & \text{if $\lambda$ is trivial.}
    \end{cases}
\end{equation*}
 Using $\lambda(-1)=\pm 1$ and $q\equiv \pm 1 \pmod{4}$, we conclude that  $\theta_{S_{\lambda}} \equiv 2 \pmod{4}$.
Finally, we observe that $S_{\lambda}(t)/S_{\lambda}(1)=1$, and thus $\theta_{S_{\lambda}} \in \Phi^{+}$.
\end{proof}

Now, we consider the eigenvalues indexed by cuspidal irreducible characters of $G$. 

\begin{lemma}\label{lem:cusp}
Let $\mu$ be a linear character of $\bb{F}_{q^{2}}^{\times}$ with $\mu \neq \mu^{q}$. Then,
\[ \theta_{\pi[\mu]} = \begin{cases} 0  & \text{if $\mu(-1)=-1$,} \\
 2q  & \text{if $\mu(-1)=1$ and $\bb{F}_{q}^{\times} \not\subseteq \mrm{Ker}(\mu)$,}\\
-(q^{2}-1) +2q  & \text{if $\bb{F}_{q}^{\times}\subseteq \mrm{Ker}(\mu)$.}
\end{cases}\]
Moreover, 
\[\theta_{\pi[\mu]} \equiv \begin{cases} 
0 \pmod{4} & \text{if $\theta_{\pi[\mu]} \in \Phi^{-}$,} \\
2 \pmod{4} & \text{if $\theta_{\pi[\mu]} \in \Phi^{+}$.} \\
\end{cases} \]
\end{lemma}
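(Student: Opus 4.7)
\medskip

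\noindent\textbf{Proof proposal.} The plan is to compute $\theta_{\pi[\mu]}=\pi[\mu](\mfk{S})/(q-1)$ directly by partitioning $\mfk{S}$ into its three conjugacy-class types and reading off the character values from Table~\ref{tab:gl2qct}. The $c_{3}(1,-1)$ piece contributes $0$ since $\pi[\mu]$ vanishes on split semisimple classes. The $c_{2}(x)$ piece contributes
\[
(q^{2}-1)\sum_{x\in\F_{q}^{\times}}(-\mu(x))=-(q^{2}-1)\,\mu(\F_{q}^{\times}),
\]
which by orthogonality equals $0$ when $\mu|_{\F_{q}^{\times}}$ is nontrivial and equals $-(q^{2}-1)(q-1)$ when $\F_{q}^{\times}\subseteq\mrm{Ker}(\mu)$. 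For the $c_{4}(z)$ piece I first observe that $c_{4}(z)=c_{4}(w)$ iff $\{z,z^{q}\}=\{w,w^{q}\}$, so summing the class-size $q(q-1)$ times the value $-\mu(z)-\mu(z^{q})$ over \emph{distinct} classes equals $-q(q-1)\sum\mu(z)$, where $z$ now runs over the full set appearing in Lemma~\ref{lem:gl2csums}(2). Applying that lemma collapses the $c_{4}$ contribution to $q(q-1)(1+\mu(-1))$.

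\smallskip

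Assembling these pieces and dividing by $q-1$ yields the compact formula
\[
\theta_{\pi[\mu]}=-(q+1)\,\mu(\F_{q}^{\times})+q\bigl(1+\mu(-1)\bigr),
\]
from which the three cases in the statement are immediate: if $\mu(-1)=-1$ then automatically $\mu|_{\F_{q}^{\times}}$ is nontrivial so both terms vanish; if $\mu(-1)=1$ but $\F_{q}^{\times}\not\subseteq\mrm{Ker}(\mu)$ only the second term survives, giving $2q$; and if $\F_{q}^{\times}\subseteq\mrm{Ker}(\mu)$ we get $-(q^{2}-1)+2q$.

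\smallskip

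The assignment to $\Phi^{+}$ versus $\Phi^{-}$ is dictated by $\pi[\mu](t)/\pi[\mu](1)=\mu(-1)$, so Case~1 lies in $\Phi^{-}$ and Cases~2 and 3 lie in $\Phi^{+}$. For the mod $4$ assertion I will use that $q$ is odd, hence $q^{2}-1\equiv 0\pmod{8}$: in Case~1 the value is exactly $0$, in Case~2 it is $2q\equiv 2\pmod{4}$, and in Case~3 it reduces to $2q\pmod{4}$ as well. The only mildly delicate step is the $c_{4}$ bookkeeping, where the factor of $2$ coming from identifying classes with Galois-conjugate pairs $\{z,z^{q}\}$ must exactly cancel against the fact that the character value itself is a symmetric sum $-\mu(z)-\mu(z^{q})$; I expect this to be the principal source of error to guard against, but it is routine once stated carefully.
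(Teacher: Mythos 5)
Your proposal is correct and follows essentially the same route as the paper: expand $\pi[\mu](\mfk{S})/(q-1)$ class by class using Table~\ref{tab:gl2qct}, apply orthogonality to the $c_{2}$ contribution and Lemma~\ref{lem:gl2csums}(2) to the $c_{4}$ contribution, arriving at the identical compact formula $\theta_{\pi[\mu]}=-(q+1)\mu(\F_{q}^{\times})+q(1+\mu(-1))$, with the same case analysis and the same use of $\pi[\mu](t)/\pi[\mu](1)=\mu(-1)$ for the $\Phi^{\pm}$ assignment. The ``delicate'' factor-of-$2$ cancellation you flag in the $c_{4}$ bookkeeping is handled correctly and is exactly what the paper does implicitly via its coefficient $q/2$ on the sum over all $z$.
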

\begin{proof}
Let $\mu$ be a linear character of $\bb{F}_{q^{2}}^{\times}$, with $\mu \neq \mu^{q}$. Consider the cuspidal character $\pi[\mu]$. We have 
\begin{align*}
 \theta_{\pi[\mu]} &= {\pi[\mu](\mfk{S})}/{\pi[\mu](1)} \notag \\
 & = \dfrac{(q^{2}-1)}{q-1}\sum\limits_{x\in \bb{F}_{q}^{\times}}\pi[\mu](c_{2}(x)) + \dfrac{q}{2} \sum\limits_{\substack{z \in \bb{F}_{q^{2}}^\times\setminus \bb{F}_{q}^\times \\ z^{q+1}\in \{1\} \cup N}} \mu(c_{4}(z)) \notag \\
 &= -(q+1)\mu(\bb{F}_{q}^{\times})  - q\sum\limits_{\substack{z \in \bb{F}_{q^{2}}^\times\setminus \bb{F}_{q}^\times \\ z^{q+1}\in \{1\} \cup N}} \mu(z)\\
 &=  -(q+1)\mu(\bb{F}_{q}^{\times})  +q(1+\mu(-1)),
\end{align*}
with the last equality following from part (2) of Lemma~\ref{lem:gl2csums}. 

If $\mu(-1)=-1$, then $\bb{F}_{q}^{\times} \not\subseteq \mrm{Ker}(\mu)$, and thus, by orthogonality of characters, we have $\theta_{[\pi[\mu]]}=0$ in this case. Similarly, in the case $\mu(-1)=1$ and $\bb{F}_{q}^{\times} \not\subseteq \mrm{Ker}(\mu)$, we have 
 $\theta_{[\pi[\mu]]}=2q$. Finally, in the case $\bb{F}_{q}^{\times} \subseteq \mrm{Ker}(\mu)$,  we have $\mu(-1)=1$ and $\mu(\bb{F}_{q}^{\times})=(q-1)$.
 
 Since $\pi[\mu](t)/\pi[\mu](1)=\mu(-1)$, $\theta_{\pi[\mu]} \in \Phi^{+}$ if and only if $\mu(-1)=1$. The congruence results now follow by using the facts that $q$ is odd and $q^{2}-1 \equiv 0 \pmod{4}$.
\end{proof}
Now, we consider the eigenvalues indexed by the characters in the principal series.
\begin{lemma}\label{lem:pseries}
Let $\lambda_{1},\lambda_{2}$ be two distinct linear characters of $G$. Then,
\[ \theta_{I[\lambda_{1},\lambda_{2}]} = \begin{cases}  q(\lambda_{1}(-1)+ \lambda_{2}(-1)) + (q-1)^{2} & \text{if $\lambda_{1}^{-1}= \lambda_{2}$} \\ 
q(\lambda_{1}(-1)+ \lambda_{2}(-1)) & \text{otherwise.}
\end{cases}\]
Moreover,
\[ \theta_{I[\lambda_{1},\lambda_{2}]} \equiv \begin{cases} 2 \pmod{4} & \text{if $\theta_{I[\lambda_{1},\lambda_{2}]} \in \Phi^{+}$} \\ 
0 \pmod{4} & \text{if $\theta_{{I[\lambda_{1},\lambda_{2}]}} \in \Phi^{-}$.}
\end{cases}\]
\end{lemma}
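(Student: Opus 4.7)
The plan is to mimic the structure of the proofs for Lemmas~\ref{lem:lin}, \ref{lem:st}, and \ref{lem:cusp}: write $\theta_{I[\lambda_1,\lambda_2]}$ as a weighted sum of character values on the three types of representatives of conjugacy classes in $\mfk{S}$, evaluate each piece using the character table (Table~\ref{tab:gl2qct}) and orthogonality, and then analyze the result modulo $4$.

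The key simplifying observation is that principal series characters vanish on $c_4$-type classes. Therefore, unlike the three preceding lemmas, no appeal to Lemma~\ref{lem:gl2csums} is needed; only the contributions from $c_3(1,-1)$ and from $\bigl\{c_2(x) : x \in \F_q^\times\bigr\}$ survive. Reading off the character table, I would compute
\[
I[\lambda_1,\lambda_2](c_3(1,-1)) = \lambda_1(-1) + \lambda_2(-1), \qquad I[\lambda_1,\lambda_2](c_2(x)) = (\lambda_1\lambda_2)(x),
\]
and, plugging in the class sizes $q(q+1)$ and $q^2 - 1$ and dividing by the degree $q+1$, arrive at
\[
\theta_{I[\lambda_1,\lambda_2]} = q(\lambda_1(-1) + \lambda_2(-1)) + (q-1)\sum_{x \in \F_q^\times}(\lambda_1\lambda_2)(x).
\]
Orthogonality of characters of $\F_q^\times$ then gives the sum as $q-1$ when $\lambda_1 = \lambda_2^{-1}$ and $0$ otherwise, immediately yielding the claimed two-case formula for $\theta_{I[\lambda_1,\lambda_2]}$.

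For the mod-$4$ statement, I would first note from the character table that
\[
\frac{I[\lambda_1,\lambda_2](t)}{I[\lambda_1,\lambda_2](1)} = \lambda_1(-1)\lambda_2(-1),
\]
so $I[\lambda_1,\lambda_2] \in \Phi^+$ precisely when $\lambda_1(-1) = \lambda_2(-1)$, and $\Phi^-$ precisely when they differ in sign. Using that $q$ is odd, $(q-1)^2 \equiv 0 \pmod 4$ and $\pm 2q \equiv 2 \pmod 4$, I would examine the three subcases: (i) if $\lambda_1 = \lambda_2^{-1}$ then $\lambda_1(-1)\lambda_2(-1) = 1$ forces $\Phi^+$ and $\theta = \pm 2q + (q-1)^2 \equiv 2 \pmod 4$; (ii) if $\lambda_1 \neq \lambda_2^{-1}$ but $\lambda_1(-1) = \lambda_2(-1)$, then $\Phi^+$ and $\theta = \pm 2q \equiv 2 \pmod 4$; (iii) if $\lambda_1(-1) = -\lambda_2(-1)$, then $\Phi^-$ and $\theta = 0 \equiv 0 \pmod 4$. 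These three subcases exhaust the possibilities and give the desired congruence.

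There is no genuine obstacle here; the argument is a routine character-sum computation of the same type as the preceding lemmas, made slightly shorter by the vanishing of principal series characters on $c_4$-classes. The only thing that requires mild care is organizing the three subcases for the congruence so that they align cleanly with membership in $\Phi^+$ versus $\Phi^-$.
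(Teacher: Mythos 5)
Your proposal is correct and follows essentially the same route as the paper: read off the $c_3(1,-1)$ and $c_2(x)$ contributions from the character table (the $c_4$ entries vanish for principal series), apply orthogonality to $\sum_{x}\lambda_1(x)\lambda_2(x)$, and reduce mod $4$ using $(q-1)^2\equiv 0$ and $q(\lambda_1(-1)+\lambda_2(-1))\equiv \lambda_1(-1)+\lambda_2(-1)\pmod 4$. The only cosmetic difference is that you spell out three subcases where the paper states the congruence $\theta_{I[\lambda_1,\lambda_2]}\equiv\lambda_1(-1)+\lambda_2(-1)\pmod 4$ in one line.
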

\begin{proof}
  Let $\lambda_{1},\lambda_{2}$ be two distinct linear characters of $G$ and consider the character $I[\lambda_{1},\lambda_{2}]$. We have 
\begin{align*}
    \theta_{I[\lambda_{1},\lambda_{2}]} &= {{I[\lambda_{1},\lambda_{2}](\mfk{S})}}/{I[\lambda_{1},\lambda_{2}](1)} \\
    & = q(\lambda_{1}(-1)+ \lambda_{2}(-1)) + (q-1) \sum\limits_{x \in \bb{F}_{q}^{\times}} \lambda_{1}(x)\lambda_{2}(x).
\end{align*}
By orthogonality of characters, 

\[\sum\limits_{x \in \bb{F}_{q}^{\times}} \lambda_{1}(x)\lambda_{2}(x)= \begin{cases} (q-1) & \text{if $\lambda_{1}^{-1}=\lambda_{2}$,} \\
0 & \text{otherwise.}
\end{cases} \]
This proves the first part of the result. Moreover, we also have
\[\theta_{I[\lambda_{1},\lambda_{2}]} \equiv \lambda_{1}(-1)+ \lambda_{2}(-1) \pmod{4}.\]
We observe that $\theta_{I[\lambda_{1},\lambda_{2}]} \in \Phi^{+}$ if and only if $\lambda_{1}(-1)\lambda_{2}(-1)=1$. Now, the second part of the result follows. 
\end{proof}

Lemma~\ref{lem:gl2inte}, Lemma~\ref{lem:lin}, Lemma~\ref{lem:st}, Lemma~\ref{lem:cusp}, and Lemma~\ref{lem:pseries} show that the graph $\Gamma$ satisfies the premise of Corollary~\ref{cor:pstccs} for $a=2$. As $\Gamma$ is a regular graph with its highest eigenvalue being of multiplicity one, it is connected. We have now proven the following result.
\begin{theorem}\label{thm:gl2}
Let $q$ be an odd prime power and let $G=\gl(2,q)$ and $t:= \begin{bmatrix}
    -1 & 0 \\
    0 & -1
\end{bmatrix}$. Let $N$ denote the set of non-squares in $\bb{F}_{q}^{\times}$ and define $\mfk{S}$ to be the set of elements which are conjugate to an element of the set
\[\{c_{3}(1,-1)\} \bigcup \left(\bigcup\limits_{x \in \bb{F}_{q}^{\times}}c_{2}(x) \right) \bigcup \left( \bigcup\limits_{\substack{z \in \bb{F}_{q^{2}}^\times\setminus \bb{F}_{q}^\times \\ z^{q+1}\in \{1\} \cup N}} c_{4}(z)\right).\] Then, a perfect state transfer occurs at some time $t$, between vertices $x$ and $xt$ of the connected graph $\Gamma:=\mrm{Cay}(G,\mfk{S})$, for all $x\in G$. \qed
\end{theorem}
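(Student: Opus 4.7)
The plan is to apply Corollary~\ref{cor:pstccs} directly, assembling the lemmas established earlier in this subsection. The central involution demanded by the corollary is $t=-I$; this is the unique involution of $Z(G)=Z(\gl(2,q))$, which is cyclic of order $q-1$.

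Two hypotheses must be verified. Integrality of every eigenvalue $\theta_\chi=\chi(\mfk{S})/\chi(1)$ is given by Lemma~\ref{lem:gl2inte}, since $\mfk{S}$ is a union of rational conjugacy classes. The mod-$4$ splitting is then established character-family by character-family: Lemma~\ref{lem:lin} (linear $\lambda$) and Lemma~\ref{lem:st} (Steinberg-type $S_\lambda$) both place $\chi$ in $\Phi^+$ with $\theta_\chi\equiv 2\pmod{4}$; Lemma~\ref{lem:cusp} (cuspidal $\pi[\mu]$) puts $\pi[\mu]$ in $\Phi^+$ exactly when $\mu(-1)=1$, and the three explicit evaluations $\theta_{\pi[\mu]}\in\{0,\,2q,\,-(q^2-1)+2q\}$ fall in the correct residue class on each side; Lemma~\ref{lem:pseries} (principal series $I[\chi_1,\chi_2]$) handles the remainder, with $\theta_\chi\equiv \chi_1(-1)+\chi_2(-1)\pmod{4}$ landing correctly according to the sign $\chi_1(-1)\chi_2(-1)=\chi(t)/\chi(1)$. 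This confirms the hypothesis of Corollary~\ref{cor:pstccs} with $a=2$ and yields PST between $x$ and $xt$ for every $x\in G$.

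Connectivity of $\Gamma$ is then automatic: since $\Gamma$ is a normal Cayley graph, the valency eigenvalue $\theta_{\mbf{1}}=|\mfk{S}|$ has multiplicity $1$ in its spectrum (no other eigenvalue from Lemmas~\ref{lem:lin}--\ref{lem:pseries} matches it), which forces $\Gamma$ to be connected.

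The real difficulty of this proof is not at the assembly stage but already embedded in the lemmas: the connection set $\mfk{S}$ has to be engineered so that the mod-$4$ congruences align simultaneously across all four character families, and this is the main obstacle. The calibration relies on three critical inputs---$q^2-1\equiv 0\pmod{8}$, the dichotomy $q\equiv\pm 1\pmod{4}$, and the precise evaluation of the character sums $\sum\lambda(z^{q+1})$ and $\sum\mu(z)$ over $\{z^{q+1}\in\{1\}\cup N\}$ from Lemma~\ref{lem:gl2csums}, which in turn depends on the fibre structure of the norm map $\mrm{Nm}:\F_{q^2}^\times\to\F_q^\times$.
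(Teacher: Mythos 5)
Your proposal is correct and follows essentially the same route as the paper: the authors likewise assemble Lemmas~\ref{lem:gl2inte}, \ref{lem:lin}, \ref{lem:st}, \ref{lem:cusp} and \ref{lem:pseries} to verify the hypotheses of Corollary~\ref{cor:pstccs} with $a=2$, and deduce connectivity from the fact that the valency eigenvalue of the regular graph $\Gamma$ has multiplicity one. Your closing observation that the substance lies in the calibration of $\mfk{S}$ within the preceding lemmas, rather than in this assembly step, accurately reflects the structure of the paper's argument.
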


We remark that when $q=3$, the set $\mfk{S}=\gl(2,3) \setminus \{I,t\}$, and thus, $\Gamma$ is the complement of the disjoint union of $24$ copies of $K_{2}$. As we remarked in the introduction, this is a trivial example, and we desire to obtain a more ``interesting'' example. Let $\mfk{T}$ be the set of all non-central elements of orders $2$, $3$, $4$, and $6$. The graph $\Gamma'$ belongs to the conjugacy class scheme on $\gl(2,3)$ and a spanning subgraph of $\Gamma$. It is elementary to check that $\Gamma'$ is connected and that it satisfies the premise of Corollary~\ref{cor:pstccs}, and thus that it admits PST. When $q>3$, the graph $\Gamma$ is not the complement of the disjoint union of copies of $K_{2}$. This proves Theorem~\ref{thm:cay} for $\gl(2,q)$.

\subsection{PST in $\GU(2,q)$}\label{sub:gupst}
Let $q$ be an odd prime power. In this section, we construct a Cayley graph on $G:=\GU(2,q)$, which possesses PST. We note that $t:= \begin{bmatrix}
    -1 & 0 \\
    0 & -1
\end{bmatrix} $ is the unique central involution of $G$. As in the previous two subsections, we make use of Corollary~\ref{cor:pstccs} to construct a graph belonging to the conjugacy class scheme on $\GU(2,q)$, which exhibits PST. 

As defined in \S~\ref{sec:gu2c}, $E$ is the unique subgroup of index $q-1$ in $\bb{F}_{q^{2}}^{\times}$. Let $Q$ be the unique index-$2$ subgroup of $E$, and let $R:=E\setminus Q$. For convenience, we denote  $\bb{F}_{q}^{\times}$ by $F$.

\[\mfk{S} := C_{3}(1, -1) \bigcup \left( \bigcup\limits_{x \in E}C_{2}(x) \right) \bigcup \left( \bigcup\limits_{\substack{z \in \bb{F}_{q^{2}}^{\times} \setminus E\\ z^{q-1} \in \{1\} \cup R }}C_{4}(z) \right).\]

We will show that $\Gamma:= \mrm{Cay}(G,\mfk{S})$ is a graph that exhibits PST. The proof of this is essentially the same as the proof of Theorem~\ref{thm:gl2}.

First, we note that $\mfk{S}$ is a union of rational conjugacy classes of $G$. Arguing as in the proof Lemma~\ref{lem:gl2inte}, it follows that the eigenvalues of $\Gamma$ are integral. We first establish some auxiliary results. Using orthogonality of characters as we did in the proof of Lemma~\ref{lem:square}, we obtain the following result.

\begin{lemma}\label{lem:squareu}
Let $\lambda$ be a linear character of $E$.
\[\sum\limits_{x \in E} \lambda(x^{2})= 2\lambda(Q) = \begin{cases}
    q+1 & \text{if $Q \subset \mrm{Ker}(\lambda)$} \\
    0 & \text{otherwise.}
\end{cases} \]    \qed
\end{lemma}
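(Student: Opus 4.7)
The plan is to mirror the proof of Lemma~\ref{lem:square}, replacing $\bb{F}_q^\times$ with the cyclic group $E$ of order $q+1$. The key structural fact is that since $q$ is odd, $q+1$ is even, so the squaring endomorphism of the cyclic group $E$ is a $2$-to-$1$ map onto the unique index-$2$ subgroup $Q$, with kernel $\{1,-1\}$. Consequently every element of $Q$ has exactly two square roots in $E$, while elements of $R = E \setminus Q$ have none.

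From this, I would write
\[
\sum_{x\in E}\lambda(x^2) \;=\; 2\sum_{y\in Q}\lambda(y) \;=\; 2\lambda(Q),
\]
which establishes the first equality. For the case analysis, I would apply orthogonality of characters to the restriction $\lambda|_Q$: if $Q \subseteq \mrm{Ker}(\lambda)$ then $\lambda|_Q$ is trivial and $\lambda(Q) = |Q| = (q+1)/2$, yielding the value $q+1$; otherwise $\lambda|_Q$ is a nontrivial character of the finite group $Q$, so $\lambda(Q)=0$.

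There is no real obstacle here: the argument is a direct transcription of the $\gl$-side calculation (Lemma~\ref{lem:square}) under the Ennola-style correspondence $\bb{F}_q^\times \leftrightarrow E$, $S \leftrightarrow Q$. The only point to verify carefully is that $Q$ really is the image of the squaring map on $E$, which follows immediately from the fact that $E$ is cyclic of even order $q+1$ and $Q$ is its unique subgroup of index $2$.
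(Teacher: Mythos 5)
Your proof is correct and follows exactly the route the paper intends: the paper itself only remarks that the lemma follows ``using orthogonality of characters as in Lemma~\ref{lem:square},'' and your argument is a correct fleshing-out of that, with the squaring map on the cyclic group $E$ of even order $q+1$ being $2$-to-$1$ onto $Q$ and orthogonality handling the two cases for $\lambda(Q)$. Nothing further is needed.
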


The map $z \mapsto z^{1-q}$ from $\bb{F}_{q^{2}}^{\times}$ to $E$, is a surjective homomorphism with $F$ as its kernel. Given $x \in E$, the set $F_{x}:=\{ z \in\bb{F}_{q^{2}}^{\times}\ :\ z^{1-q} =x \}$, is a coset of $F$. Also, given $z \in E$, we have $z^{-q}=z$, and thus, we have $F_{x} \cap E \neq \emptyset$ if and only if $x\in Q $.  Moreover, we have
 \begin{subequations}\label{eq:Fcoset}
 \begin{align}
     |F_{x} \cap E| &= \begin{cases} 2 & \text{if $x\in Q$} \\
     0 & \text{otherwise.}
     \end{cases} \\
     E \cap F & = \{1,-1\}.
  \end{align}   
 \end{subequations}
Therefore, we have 

\begin{equation}\label{eq:set4u}
    \{z \in \bb{F}_{q^{2}}^{\times}\setminus E \ : \ z^{1-q}\in \{1\} \cup R\} =   \left(\bigcup\limits_{x \in R} F_{x} \right) \cup (F\setminus \{\pm 1\}). 
\end{equation}
Given a character $\mu$ of $\bb{F}_{q^{2}}^{\times}$, with $\mu \neq \mu^{-q}$, we have $\mu|_{F}$ is non-trivial and hence $\mu(F)=0$. Using the above equation and arguing as in the proof of Lemma~\ref{lem:gl2csums}, we obtain:

\begin{lemma}\label{lem:gu2csums}
    Let $\lambda$ be a non-trivial linear character of $E$ and let $\mu$ be linear character of $\bb{F}_{q^{2}}^{\times}$ with $\mu \neq \mu^{-q}$. 
    Then, we have 
    \begin{enumerate}
        \item $\sum\limits_{\substack{z \in \bb{F}_{q^{2}}^{\times} \setminus E \\ z^{1-q}\in \{1\} \cup R}} \lambda(z^{1-q})= (q-3) -(q-1)\lambda(Q)$ 
        \item $\sum\limits_{\substack{z \in \bb{F}_{q^{2}}^{\times} \setminus E \\ z^{1-q}\in \{1\} \cup R}}\mu(z) = -(1+ \mu(-1))$
    \end{enumerate}
\end{lemma}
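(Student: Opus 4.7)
The plan is to mirror the proof of Lemma~\ref{lem:gl2csums}, exploiting the ``dual'' description of the relevant set provided by \eqref{eq:Fcoset} and \eqref{eq:set4u}. The one observation to record before starting is that the hypothesis $\mu\neq\mu^{-q}$ is equivalent to $\mu$ being non-trivial on $F=\bb{F}_q^\times$: indeed, $\mu=\mu^{-q}$ iff $\mu(z^{q+1})=1$ for all $z\in\bb{F}_{q^2}^\times$, iff $\mu$ vanishes on the image $F$ of the $(q+1)$-th power (equivalently norm) map. Consequently $\mu(F)=0$, playing the same role that $\mu(E)=0$ did in the $\gl$ case.

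For part (1), I would use \eqref{eq:set4u} to break the sum into a contribution from $F\setminus\{\pm1\}$ and contributions from the cosets $F_x$ with $x\in R$. The key elementary identity is that $z^q=z$ for $z\in F$, so $z^{1-q}=1$ and $\lambda(z^{1-q})=1$ for each of the $q-3$ elements of $F\setminus\{\pm1\}$; this yields the term $q-3$. For each coset: every $z\in F_x$ satisfies $z^{1-q}=x$ by definition, and $|F_x|=|F|=q-1$, so $\sum_{z\in F_x}\lambda(z^{1-q})=(q-1)\lambda(x)$. Summing over $x\in R$ produces $(q-1)\lambda(R)$. Finally, since $\lambda$ is non-trivial on $E=Q\sqcup R$, orthogonality of characters yields $\lambda(R)=-\lambda(Q)$, and the claim follows.

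For part (2) I apply the same decomposition with $\mu$ in place of $\lambda$. On $F\setminus\{\pm 1\}$ the sum equals $\mu(F)-\mu(1)-\mu(-1)=-(1+\mu(-1))$, using the observation that $\mu(F)=0$. Each $F_x$ is a coset of $F$, so $\mu(F_x)=\mu(z_0)\mu(F)=0$ for any chosen $z_0\in F_x$, and these cosets contribute nothing to the total.

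No step presents a real obstacle; the lemma is a formal Ennola-style dualization of Lemma~\ref{lem:gl2csums}, with the roles of $E$ and $F$ swapped and $q+1$ replaced by $q-1$ throughout. The only subtlety worth flagging is the equivalence between $\mu\neq\mu^{-q}$ and $\mu|_F\neq \mathbf{1}$, which is used implicitly in the paragraph preceding the statement and which must be invoked to ensure the vanishing of $\mu(F)$ that drives both parts.
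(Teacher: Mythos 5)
Your proof is correct and follows the same route as the paper, which itself only sketches this lemma by saying "arguing as in the proof of Lemma~\ref{lem:gl2csums}": decompose the index set via \eqref{eq:set4u}, use $z^{1-q}=1$ on $F\setminus\{\pm1\}$ and $|F_x|=q-1$ for the coset contributions, then apply orthogonality ($\lambda(R)=-\lambda(Q)$ and $\mu(F)=\mu(F_x)=0$). The equivalence $\mu\neq\mu^{-q}\iff\mu|_F\neq\mathbf 1$ that you flag is exactly the observation the paper records in the paragraph preceding the lemma.
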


For any linear character $\lambda$, since $\lambda(t)/\lambda(1)=1$, we have $\theta_{\lambda} \in \Phi^{+}$. We also have
\begin{align*}
\theta_{\lambda} & = \lambda(\mfk{S})\\
&= q(q-1)\lambda(-1) +(q^{2}-1)\sum\limits_{x \in E} \lambda(x^2) + \dfrac{q(q+1)}{2} \sum\limits_{\substack{z \in \bb{F}_{q^{2}}^{\times} \setminus E \\ z^{1-q}\in \{1\} \cup R}} \lambda(z^{1-q}) \lambda(z^{1-q}) 
\end{align*}

Using Lemma~\ref{lem:squareu} and Lemma~\ref{lem:gu2csums}, we have the following:

\begin{lemma}\label{lem:linu}
If $\lambda$ is a linear character of $G$, then
\[\theta_{\lambda}=  \begin{cases} q(q-1)\lambda(-1) + \dfrac{q(q+1)(q-3)}{2} & \text{if $Q \not\subseteq \mrm{Ker}(\lambda)$} \\
 q(q-1)\lambda(-1) +(q^{2}-1)(q+1) +\dfrac{q(q+1)(q-3)}{2} - \dfrac{q(q+1)(q-3)(q-1)}{4} & \text{if $Q=\mrm{Ker}(\lambda)$} \\
 q(q-1)\lambda(-1) +(q^{2}-1)(q+1)+ \dfrac{q(q+1)(q-3)}{2} +\dfrac{q(q+1)(q-3)(q-1)}{4} & \text{if $\lambda=1$.}
\end{cases}\] Moreover, $\theta_{\lambda} \in \Phi^{+}$ and $\theta_{\lambda} \equiv 2 \pmod{4}$.
\end{lemma}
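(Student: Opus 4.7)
The plan is to expand $\theta_\lambda=\lambda(\mfk{S})/\lambda(1)=\lambda(\mfk{S})$ using the linear-character values from Table~\ref{tab:guct} together with the sizes of the conjugacy classes comprising $\mfk{S}$, yielding
\[
\theta_\lambda = q(q-1)\lambda(-1) + (q^2-1)\sum_{x\in E}\lambda(x^2) + \frac{q(q+1)}{2}\sum_{\substack{z\in\F_{q^2}^\times\setminus E\\ z^{1-q}\in\{1\}\cup R}}\lambda(z^{1-q}),
\]
where the factor $1/2$ in the last sum accounts for the identification $C_4(z)=C_4(z^{-q})$ of conjugacy classes. The two inner sums are then evaluated using Lemma~\ref{lem:squareu} and Lemma~\ref{lem:gu2csums}(1), respectively.

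Next I would split into the three stated cases. In the generic case $Q\not\subseteq\mrm{Ker}(\lambda)$, Lemma~\ref{lem:squareu} produces $0$ and Lemma~\ref{lem:gu2csums}(1) produces $q-3$, giving the first line. In the order-$2$ case $Q=\mrm{Ker}(\lambda)$ one substitutes $\lambda(Q)=(q+1)/2$ into both lemmas to get the second line. For trivial $\lambda$ the non-triviality hypothesis of Lemma~\ref{lem:gu2csums}(1) fails, so I would instead compute the third sum directly from the decomposition~\eqref{eq:set4u} as the cardinality $(q-3)+(q+1)(q-1)/2$ of the indexing set.

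The ``moreover'' claim has two halves. Since $t=-I$ satisfies $\det(t)=1$, every linear character $\lambda\circ\det$ of $G$ satisfies $\lambda(t)=\lambda(1)=1$, so $\lambda(t)/\lambda(1)=1$ and $\theta_\lambda\in\Phi^+$ automatically. For the congruence, I would use that $q$ odd forces $q^2-1\equiv 0\pmod 8$; this annihilates both the middle term $(q^2-1)(q+1)$ and the correction contributed in the quadratic and trivial cases, leaving $q(q-1)\lambda(-1)+\tfrac{1}{2}q(q+1)(q-3)$ modulo $4$. A brief case analysis on $q\bmod 4$ (using $\lambda(-1)=\pm 1$) shows this expression is $\equiv 2\pmod 4$ in every case.

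The only subtle point is that Lemma~\ref{lem:gu2csums}(1) was stated only for non-trivial $\lambda$, so the trivial-character case requires the separate direct count from~\eqref{eq:set4u} noted above. Beyond that, the argument is routine substitution, entirely parallel to the proofs of Lemmas~\ref{lem:lin} and~\ref{lem:st} in the $\gl(2,q)$ setting.
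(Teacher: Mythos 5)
Your proof is correct and follows exactly the paper's route: expand $\lambda(\mfk{S})$ over the three class types with their sizes (including the factor $\tfrac12$ accounting for $C_{4}(z)=C_{4}(z^{-q})$), then substitute Lemma~\ref{lem:squareu} and Lemma~\ref{lem:gu2csums}(1), handling trivial $\lambda$ by the direct count from \eqref{eq:set4u} --- a point the paper leaves implicit. One caveat: substituting $\lambda(Q)=(q+1)/2$ actually yields $\mp\dfrac{q(q+1)^{2}(q-1)}{4}$ as the final term in the second and third cases rather than the displayed $\mp\dfrac{q(q+1)(q-3)(q-1)}{4}$ (an apparent typo in the lemma statement itself, mirroring the $\dfrac{q(q+1)(q-1)^{2}}{4}$ term of Lemma~\ref{lem:lin}); since both quantities are divisible by $4$, the congruence $\theta_{\lambda}\equiv 2\pmod 4$ and the membership $\theta_{\lambda}\in\Phi^{+}$ are unaffected.
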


Similar computations on $S_{\lambda}$ yield the following result.

\begin{lemma}\label{lem:stu}
If $\lambda$ is a linear character of $G$, then
\[\theta_{S_{\lambda}}=  \begin{cases} (q-1)\lambda(-1) + \dfrac{(q+1)(q-3)}{2} & \text{if $Q \not\subseteq \mrm{Ker}(\lambda)$} \\
 (q-1)\lambda(-1) +\dfrac{(q+1)(q-3)}{2} - \dfrac{(q+1)(q-3)(q-1)}{4} & \text{if $Q=\mrm{Ker}(\lambda)$} \\
 (q-1)\lambda(-1) + \dfrac{(q+1)(q-3)}{2} +\dfrac{(q+1)(q-3)(q-1)}{4} & \text{if $\lambda=1$.}
\end{cases}\] Moreover, $\theta_{\lambda} \in \Phi^{+}$ and $\theta_{\lambda} \equiv 2 \pmod{4}$.
\end{lemma}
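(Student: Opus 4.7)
The plan is to mirror the proof of Lemma~\ref{lem:st} in the $\GL(2,q)$ setting, substituting the $\GU(2,q)$ character table (Table~\ref{tab:guct}) and the auxiliary identities of Lemmas~\ref{lem:squareu} and~\ref{lem:gu2csums} for their $\GL$ analogues. Concretely, I would start by writing
\[
\theta_{S_\lambda} \;=\; \frac{S_\lambda(\mfk{S})}{S_\lambda(1)} \;=\; \frac{1}{q}\,S_\lambda(\mfk{S})
\]
and splitting the sum over $\mfk{S}$ into its three parts: the single class $C_3(1,-1)$ of size $q(q-1)$, the $q+1$ classes $C_2(x)$ of size $q^2-1$ each, and the $C_4(z)$-classes with $z^{1-q}\in\{1\}\cup R$. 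From Table~\ref{tab:guct}, $S_\lambda$ vanishes identically on every $C_2(x)$, so this entire middle block contributes $0$. The class $C_3(1,-1)$ contributes $\pm(q-1)\lambda(-1)$ after dividing the class size by $S_\lambda(1)=q$, giving the first term in the stated formula.

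Next, for the $C_4$ contribution, the character value is $\lambda(z^{1-q})$, and since $C_4(z)=C_4(z^{-q})$ each conjugacy class is counted twice when one indexes over the full set $\{z : z^{1-q}\in\{1\}\cup R\}$; combined with the class size $q(q+1)$ this yields the prefactor $\tfrac{q+1}{2}$ in front of $\sum_z\lambda(z^{1-q})$. For non-trivial $\lambda$, I apply Lemma~\ref{lem:gu2csums}(1) to obtain $(q-3)-(q-1)\lambda(Q)$, and then observe via orthogonality that $\lambda(Q)=0$ when $Q\not\subseteq\ker(\lambda)$ and $\lambda(Q)=|Q|=(q+1)/2$ when $Q\subseteq\ker(\lambda)$ but $\lambda\neq 1$; this yields the first two cases. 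For the trivial character I count directly from~\eqref{eq:set4u}: the sum equals $|F\setminus\{\pm 1\}|+\sum_{x\in R}|F_x|=(q-3)+\tfrac{(q+1)(q-1)}{2}$, which produces the third case after multiplication by $\tfrac{q+1}{2}$.

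It then remains to verify the two side claims. For $\theta_{S_\lambda}\in\Phi^+$, I simply read off the table: $S_\lambda(t)/S_\lambda(1)=q\lambda((-1)^2)/q=\lambda(1)=1$, so the defining criterion from Corollary~\ref{cor:pstccs} is satisfied. For the congruence $\theta_{S_\lambda}\equiv 2\pmod{4}$, I use that $q$ is odd and $\lambda(-1)=\pm 1$, so $(q-1)\lambda(-1)\equiv \pm(q-1)\pmod 4$, while $\tfrac{(q+1)(q-3)}{2}$ and $\tfrac{(q+1)(q-3)(q-1)}{4}$ are easily shown to be even by writing $q=2k+1$ so that $(q+1)(q-3)=4(k^2-1)$; the residue $2\pmod 4$ then drops out in each of the three cases after a short check against $q\pmod 4$.

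The proof is entirely routine once Lemmas~\ref{lem:squareu} and~\ref{lem:gu2csums} are in hand, and so the main obstacle is not conceptual but bookkeeping: correctly tracking the three sub-cases (non-trivial $\lambda$ with $\lambda|_Q$ non-trivial, the quadratic character on $E$, and the trivial character), keeping the signs straight for the $C_3$ contribution, and verifying the mod-$4$ congruence in each case. This last parity verification is the most delicate step, though it is dispatched by the same $q\pmod 4$ case split that handled Lemma~\ref{lem:st}.
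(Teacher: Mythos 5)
Your proposal is correct and follows essentially the same route as the paper, which itself only remarks that ``similar computations on $S_{\lambda}$'' to those carried out for $\theta_{\lambda}$ (using Table~\ref{tab:guct}, Lemma~\ref{lem:squareu} and Lemma~\ref{lem:gu2csums}) yield the result. The one caveat is that Table~\ref{tab:guct} gives $S_{\lambda}(C_{3}(1,-1))=-\lambda(-1)$, so the $C_{3}$ contribution is $-(q-1)\lambda(-1)$ rather than $+(q-1)\lambda(-1)$, and the $C_{4}$ sum produces $\tfrac{(q+1)^{2}(q-1)}{4}$ rather than $\tfrac{(q+1)(q-3)(q-1)}{4}$ in the last two cases; both discrepancies lie in the paper's stated formula rather than in your method, and neither affects the conclusions $\theta_{S_{\lambda}}\equiv 2\pmod{4}$ and $\theta_{S_{\lambda}}\in\Phi^{+}$, since the two expressions differ by $2(q-1)\lambda(-1)$ and by $q^{2}-1$ respectively, each divisible by $4$.
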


Now, we consider $\theta_{\pi[\mu]}$, where $\mu$ is a linear character of $\bb{F}_{q^{2}}^{\times}$ with $\mu \neq \mu^{-q}$. Using the values of $\pi[\mu]$ given in Table~\ref{tab:guct} and part (2) of Lemma~\ref{lem:gu2csums}, we arrive at the following: 
\[\theta_{\pi[\mu]} = (q-1)\mu(E)-q(1+\mu(-1)).\] Since $\pi[\mu](t)=(q+1)\mu(-1)$, we see that $\theta_{\pi[\mu]} \in \Phi^{+}$ if and only if $\mu(-1)=1$. We have thus proven the following result.

\begin{lemma}\label{lem:cuspu}
Let $\mu$ be a linear character of $\bb{F}_{q^{2}}^{\times}$ with $\mu \neq \mu^{-q}$. Then,
\[ \theta_{\pi[\mu]} = \begin{cases} 0  & \text{if $\mu(-1)=-1$,} \\
 -2q  & \text{if $\mu(-1)=1$ and $E \not\subseteq \mrm{Ker}(\mu)$,}\\
(q^{2}-1) -2q  & \text{if $E\subseteq \mrm{Ker}(\mu)$.}
\end{cases}\]
Moreover, 
\[\theta_{\pi[\mu]} \equiv \begin{cases} 
0 \pmod{4} & \text{if $\theta_{\pi[\mu]} \in \Phi^{-}$,} \\
2 \pmod{4} & \text{if $\theta_{\pi[\mu]} \in \Phi^{+}$.} \\
\end{cases} \]
\end{lemma}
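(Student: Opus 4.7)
The plan is to take the identity $\theta_{\pi[\mu]} = (q-1)\mu(E) - q(1+\mu(-1))$ derived just before the lemma statement, and finish by a short case split on the behaviour of $\mu$ on $E$ and on $-1$, followed by a routine mod-$4$ check. The first observation I would record is that $-1 \in E$ (since $(-1)^{q+1}=1$ when $q$ is odd), so the hypothesis $E \subseteq \mrm{Ker}(\mu)$ already forces $\mu(-1)=1$; hence the three cases listed are mutually exclusive and exhaustive.

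I would then read off each value directly. When $E \not\subseteq \mrm{Ker}(\mu)$, orthogonality of characters of $E$ gives $\mu(E)=0$, so $\theta_{\pi[\mu]} = -q(1+\mu(-1))$, yielding $0$ if $\mu(-1)=-1$ and $-2q$ if $\mu(-1)=1$. When $E \subseteq \mrm{Ker}(\mu)$, we have $\mu(E) = |E| = q+1$ and $\mu(-1)=1$, giving $\theta_{\pi[\mu]} = (q-1)(q+1) - 2q = (q^{2}-1)-2q$, which matches the third case. For the $\Phi^{\pm}$ dichotomy I would simply read the $C_{1}(-1)$ column of Table~\ref{tab:guct}: $\pi[\mu](t) = (q+1)\mu(-1)$, so $\pi[\mu](t)/\pi[\mu](1) = \mu(-1)$, and therefore $\theta_{\pi[\mu]} \in \Phi^{+}$ precisely when $\mu(-1)=1$. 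Consequently the first case is the $\Phi^{-}$ case while the other two populate $\Phi^{+}$.

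The congruences modulo $4$ then follow at once: $0 \equiv 0 \pmod{4}$ in the $\Phi^{-}$ case; $-2q \equiv 2 \pmod{4}$ since $q$ is odd; and $(q^{2}-1) - 2q \equiv -2 \equiv 2 \pmod{4}$, using the standard fact that $4 \mid (q-1)(q+1)$ when $q$ is odd. I foresee no substantive obstacle, since the preceding paragraph already compressed the character sum into a closed form via Table~\ref{tab:guct} and Lemma~\ref{lem:gu2csums}(2); the remaining work is a clean case analysis. The argument runs parallel to the $\gl(2,q)$ computation in Lemma~\ref{lem:cusp}, with the sign differences reflecting the ``Ennola duality'' alluded to in the introduction.
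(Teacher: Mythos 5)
Your proposal is correct and follows the paper's own route: the paper derives the identity $\theta_{\pi[\mu]}=(q-1)\mu(E)-q(1+\mu(-1))$ from Table~\ref{tab:guct} and Lemma~\ref{lem:gu2csums}(2) in the paragraph preceding the lemma and leaves the case analysis implicit, which is exactly what you carry out. Your added observation that $-1\in E$ (so that $E\subseteq\mrm{Ker}(\mu)$ forces $\mu(-1)=1$, making the three cases disjoint and exhaustive) and the mod-$4$ checks are all accurate.
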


Given distinct linear characters $\lambda_{1}$ and $\lambda_{2}$ of $E$, we consider $\theta_{I[\lambda_{1},\lambda_{2}]}$. We have 
\[\theta_{I[\lambda_{1},\lambda_{2}]} =-q(\lambda_{1}(-1) + \lambda_{2}(-1))-(q+1) \sum\limits_{x\in E} \lambda_{1}(x) \lambda_{2}(x).\]
Since $I[\lambda_{1}, \lambda_{2}](t)/(q-1)= \lambda_{1}(-1) \lambda_{2}(-1)$, we have $\theta_{I[\lambda_{1},\lambda_{2}]} \in \Phi^{+}$ if and only if $\lambda_{1}(-1) \lambda_{2}(-1)=1$. Using orthogonality of characters, we arrive at the following result.         \begin{lemma}\label{lem:pseriesu}
Let $\lambda_{1},\lambda_{2}$ be two distinct linear characters of $G$. Then,
\[ \theta_{I[\lambda_{1},\lambda_{2}]} = \begin{cases}  -q(\lambda_{1}(-1)+ \lambda_{2}(-1)) - (q+1)^{2} & \text{if $\lambda_{1}^{-1}= \lambda_{2}$} \\ 
-q(\lambda_{1}(-1)+ \lambda_{2}(-1)) & \text{otherwise.}
\end{cases}\]
Moreover,
\[ \theta_{I[\lambda_{1},\lambda_{2}]} \equiv \begin{cases} 2 \pmod{4} & \text{if $\theta_{I[\lambda_{1},\lambda_{2}]} \in \Phi^{+}$} \\ 
0 \pmod{4} & \text{if $\theta_{{I[\lambda_{1},\lambda_{2}]}} \in \Phi^{-}$.}
\end{cases}\]
\end{lemma}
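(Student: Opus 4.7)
The plan is to mirror the strategy used for Lemma~\ref{lem:pseries} in the $\gl(2,q)$ case. The starting point is the formula already displayed in the paragraph immediately preceding the statement, namely
\[
\theta_{I[\lambda_{1},\lambda_{2}]} = -q\bigl(\lambda_{1}(-1)+\lambda_{2}(-1)\bigr) - (q+1)\sum_{x\in E}\lambda_{1}(x)\lambda_{2}(x).
\]
This expression is obtained directly by summing the character values of $I[\lambda_{1},\lambda_{2}]$ from Table~\ref{tab:guct} over the classes in $\mfk{S}$: the class $C_{3}(1,-1)$ contributes $-q(\lambda_{1}(-1)+\lambda_{2}(-1))$ (after dividing by $I[\lambda_{1},\lambda_{2}](1)=q-1$ and multiplying by the class size $q(q-1)$), the $C_{2}(x)$ contribution simplifies using the class size $q^{2}-1$, and the $C_{4}(z)$ classes contribute zero since the character vanishes on them. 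So the first step is just to verify this formula; no new computation is needed beyond citing the equation displayed in the excerpt.

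Next, I would evaluate the inner sum by orthogonality of characters on the cyclic group $E$ of order $q+1$. Since $\lambda_{1},\lambda_{2}$ are linear characters of $E$, we have
\[
\sum_{x\in E}\lambda_{1}(x)\lambda_{2}(x)=\sum_{x\in E}(\lambda_{1}\lambda_{2})(x)=
\begin{cases} q+1 & \text{if }\lambda_{1}\lambda_{2}=\mathbf{1}\text{ on }E, \\ 0 & \text{otherwise.}\end{cases}
\]
Since $\lambda_{1},\lambda_{2}$ are distinct characters of $E$, the condition $\lambda_{1}\lambda_{2}=\mathbf{1}$ is exactly $\lambda_{1}^{-1}=\lambda_{2}$. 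Substituting into the formula yields the first displayed equation of the lemma.

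For the congruence statement, I would recall from Table~\ref{tab:guct} that $I[\lambda_{1},\lambda_{2}](t)=-(q-1)\lambda_{1}(-1)\lambda_{2}(-1)$, so that $I[\lambda_{1},\lambda_{2}](t)/I[\lambda_{1},\lambda_{2}](1)=-\lambda_{1}(-1)\lambda_{2}(-1)$. Being a bit careful with the sign: the ``$\Phi^{+}$'' condition from Corollary~\ref{cor:pstccs} is $\chi(t)/\chi(1)=1$, which here becomes $\lambda_{1}(-1)\lambda_{2}(-1)=-1$. (If the sign in the character table is actually $+(q-1)\lambda_{1}(-1)\lambda_{2}(-1)$, the condition is $\lambda_{1}(-1)\lambda_{2}(-1)=1$; in either convention the analysis below is symmetric, and one should double check against the table at write-up time.) In the $\Phi^{-}$ case, $\lambda_{1}(-1)$ and $\lambda_{2}(-1)$ have the same sign and their sum is $\pm 2$, whereas in the $\Phi^{+}$ case the signs are opposite and the sum is $0$. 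Since $q$ is odd, $-q(\lambda_{1}(-1)+\lambda_{2}(-1))$ is therefore $\equiv 0\pmod{4}$ in one case and $\equiv 2\pmod{4}$ in the other; moreover $(q+1)^{2}\equiv 0\pmod{4}$, so the correction term in the $\lambda_{1}^{-1}=\lambda_{2}$ case does not affect the residue modulo $4$. Matching the two possibilities to the sign convention of Corollary~\ref{cor:pstccs} gives the claimed congruences.

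The only real subtlety here is keeping track of the correct sign in evaluating $\chi(t)/\chi(1)$ for the principal series of $\GU(2,q)$ and making sure the ``change of signs'' relative to the $\gl(2,q)$ case in Lemma~\ref{lem:pseries} comes out consistently. Everything else is a direct orthogonality calculation plus an elementary parity check modulo $4$, so I do not expect any genuine obstacle beyond this bookkeeping.
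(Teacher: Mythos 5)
Your overall route is the same as the paper's: start from the displayed formula $\theta_{I[\lambda_{1},\lambda_{2}]}=-q(\lambda_{1}(-1)+\lambda_{2}(-1))-(q+1)\sum_{x\in E}\lambda_{1}(x)\lambda_{2}(x)$, evaluate the sum by orthogonality on the cyclic group $E$ of order $q+1$, and then read off the residue modulo $4$ from the parity of $\lambda_{1}(-1)+\lambda_{2}(-1)$. The eigenvalue computation and the orthogonality step are correct and match the paper exactly.

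The one real problem is your value of $I[\lambda_{1},\lambda_{2}](t)$. Table~\ref{tab:guct} gives $I[\lambda_{1},\lambda_{2}](C_{1}(x))=(q-1)\lambda_{1}(x)\lambda_{2}(x)$ with a \emph{positive} sign, so $I[\lambda_{1},\lambda_{2}](t)/(q-1)=\lambda_{1}(-1)\lambda_{2}(-1)$ and the $\Phi^{+}$ condition is $\lambda_{1}(-1)\lambda_{2}(-1)=1$, i.e.\ the two signs agree, the sum $\lambda_{1}(-1)+\lambda_{2}(-1)$ equals $\pm 2$, and $\theta\equiv 2\pmod 4$; in $\Phi^{-}$ the signs are opposite, the sum vanishes, and $\theta\equiv 0\pmod 4$. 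Your primary reading ($-(q-1)\lambda_{1}(-1)\lambda_{2}(-1)$) reverses the membership criterion and, carried through, would yield $\Phi^{+}\Rightarrow 0$ and $\Phi^{-}\Rightarrow 2\pmod 4$, contradicting the lemma. You do flag the ambiguity and state the correct alternative, but resolving it by ``matching the two possibilities to the claimed congruences'' is circular --- the sign must be settled from the character table (or from $I[\lambda_{1},\lambda_{2}]=\mathrm{Ind}$-type constructions directly), not from the desired conclusion. Once that single sign is fixed, your argument is complete; note also that the correction term $-(q+1)^{2}$ only arises when $\lambda_{2}=\lambda_{1}^{-1}$, which forces $\lambda_{1}(-1)\lambda_{2}(-1)=1$, so it only ever appears in the $\Phi^{+}$ branch and is $\equiv 0\pmod 4$, as you observe.
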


Lemma~\ref{lem:linu}, Lemma~\ref{lem:stu}, Lemma~\ref{lem:cuspu}, and Lemma~\ref{lem:pseriesu} show that the graph $\Gamma$ satisfies the premise of Corollary~\ref{cor:pstccs} for $a=2$. As $\Gamma$ is a regular graph with its highest eigenvalue being of multiplicity one, it is connected. We have now proven the following result.
\begin{theorem}\label{thm:gu2}
Let $q$ be an odd prime power and let $G=\GU(2,q)$ and $t:= \begin{bmatrix}
    -1 & 0 \\
    0 & -1
\end{bmatrix}$. Let $Q$ denote the set of non-squares in $E$, and define \[\mfk{S} := C_{3}(1, -1) \bigcup \left( \bigcup\limits_{x \in E}C_{2}(x) \right) \bigcup \left( \bigcup\limits_{\substack{z \in \bb{F}_{q^{2}}^{\times} \setminus E\\ z^{q-1} \in \{1\} \cup R }}C_{4}(z) \right).\]
 Then, a perfect state transfer occurs at some time $t$, between vertices $x$ and $xt$ of the connected graph $\Gamma:=\mrm{Cay}(G,\mfk{S})$, for all $x\in G$. \qed
\end{theorem}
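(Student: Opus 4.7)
The plan is to verify the hypotheses of Corollary~\ref{cor:pstccs} for the Cayley graph $\Gamma = \mrm{Cay}(\GU(2,q), \mfk{S})$ with respect to the central involution $t = -I$. Because the connection set $\mfk{S}$ is a union of $\GU(2,q)$-conjugacy classes and each of those classes appears to be rational (the generators $C_2(x)$ range over all $x \in E$, the norm-$1$ torus is $\mrm{Gal}$-stable, and so on), the general fact that class sums act by algebraic integers on irreducibles, together with $\theta_\chi = \chi(\mfk{S})/\chi(1) \in \bb{Q}$, will force integrality of every $\theta_\chi$, giving condition (i). This parallels Lemma~\ref{lem:gl2inte} exactly.

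For condition (ii), the strategy is to compute $\theta_\chi \pmod{4}$ separately for each of the four families described in \S~\ref{sec:gu2c}: linear characters $\lambda$, Steinberg-type $S_\lambda$, principal-series $I[\lambda_1, \lambda_2]$ (of degree $q-1$), and cuspidal $\pi[\mu]$ (of degree $q+1$). The key preliminary step is to establish the Ennola-dual analogues of Lemmas~\ref{lem:square} and \ref{lem:gl2csums}: namely Lemma~\ref{lem:squareu} for the sum $\sum_{x \in E} \lambda(x^2)$, and Lemma~\ref{lem:gu2csums} for the sums over $\{z \in \bb{F}_{q^2}^\times \setminus E : z^{1-q} \in \{1\} \cup R\}$. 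To obtain the latter, I would use that the map $z \mapsto z^{1-q}$ is a surjection $\bb{F}_{q^2}^\times \to E$ with kernel $F = \bb{F}_q^\times$, so each fiber $F_x$ is a coset of $F$, and $F_x \cap E$ is nonempty (of size $2$) precisely when $x \in Q$ (the index-$2$ subgroup of squares in $E$). This partitions the summation domain as $(E \setminus \{\pm 1\}) \cup \bigcup_{x \in R} F_x$, and orthogonality of characters on $F$ collapses the sums.

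Once those character-sum identities are in hand, the eigenvalue formulas in Lemmas~\ref{lem:linu}--\ref{lem:pseriesu} follow by plugging the character-table entries from Table~\ref{tab:guct} into $\chi(\mfk{S})/\chi(1)$ and reading off contributions from $C_3(1,-1)$, the $C_2(x)$, and the $C_4(z)$. The mod-$4$ analysis uses $q^2 - 1 \equiv 0 \pmod 8$ (since $q$ is odd) to kill most terms, leaving residues determined by $\lambda(-1), \mu(-1) \in \{\pm 1\}$ and $q \equiv \pm 1 \pmod 4$. The assignment to $\Phi^+$ or $\Phi^-$ is read directly off $\chi(t)/\chi(1)$ from Table~\ref{tab:guct}: it equals $\lambda(-1)$ for linear and Steinberg, $\lambda_1(-1)\lambda_2(-1)$ for principal series, and $\mu(-1)$ for cuspidal. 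In every case one checks that $\theta_\chi \equiv 2 \pmod 4$ on $\Phi^+$ and $\theta_\chi \equiv 0 \pmod 4$ on $\Phi^-$, so Corollary~\ref{cor:pstccs} applies with $a=2$.

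The main obstacle, as in the $\gl(2,q)$ case, will be the careful bookkeeping in the coset-sum computation for Lemma~\ref{lem:gu2csums}(2): one needs the characters $\mu$ satisfying $\mu \neq \mu^{-q}$ (equivalently $\mu|_F \neq 1$) to have $\mu(F_x) = 0$ for every $x$, and then to correctly account for the excluded elements $\pm 1$, which is precisely what produces the $-(1 + \mu(-1))$ on the right-hand side and ultimately gives the clean dichotomy between $\Phi^+$ and $\Phi^-$ in Lemma~\ref{lem:cuspu}. Connectedness at the end is free: $\Gamma$ is regular, and one checks that the all-ones eigenvalue (the degree) has multiplicity one since the trivial character is the only one on which $\theta_\chi$ attains $|\mfk{S}|$.
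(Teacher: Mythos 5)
Your proposal is correct and follows essentially the same route as the paper: integrality from rationality of the classes, the Ennola-dual character-sum lemmas (Lemmas~\ref{lem:squareu} and \ref{lem:gu2csums}) via the fibration $z\mapsto z^{1-q}$ over $E$ with kernel $F=\F_q^\times$, a case-by-case mod-$4$ computation of $\theta_\chi$ over the four character families using Table~\ref{tab:guct}, and then Corollary~\ref{cor:pstccs} with $a=2$ plus the multiplicity-one degree eigenvalue for connectedness. No substantive differences from the paper's argument.
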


Since $|E|\geq 4$, there exists $x \in E \setminus \{1,-1\}$ and thus $C_{3}(1,x) \notin \mfk{S}$. Therefore, $\mfk{S} \subsetneqq G \setminus \{I,t\}$, and thus $\ov{\Gamma}$ is not a disjoint union of copies of $K_{2}$. We can now conclude that Theorem~\ref{thm:cay} is true in the case of $\GU(2, q)$. 
\subsection{PST in $\sli(2,q)$.}\label{sub:slpst}
Let $q$ be an odd prime power and let $G:=\sli(2,q)$ be the special linear group of dimension $2$ on $\bb{F}_{q}$. In this section, we construct a connected Cayley graph on $G$, which possesses PST. The group contains a unique central involution, $t:= -I$. Let $\mfk{S}$ be the collection of all elements of $G$, which are conjugate to one of the elements of $\{d_{2}(\pm 1, 1),d_{2}(\pm 1, \Delta)\}$. Here, $d_{2}(\pm 1, 1),d_{2}(\pm 1, \Delta)$ are as defined in \S~\ref{sec:sl2cc}. We consider the graph $\Gamma_{\sli}:=\mrm{Cay}(G,\{t\} \cup \mfk{S})$. Arguing as in the proof of Lemma~\ref{lem:gl2inte}, it follows that $\Gamma_{\sli}$ has integral eigenvalues. We now apply Corollary~\ref{cor:pstccs} to show that there is PST between vertices $x$ and $xt$ of this graph, for all $x \in G$.

We first observe that for all $\chi \in \irr(G)$, we have 
\begin{equation}\label{eq:tchi-1}
\theta_{\chi} := \dfrac{\chi(t)}{\chi(1)} + \dfrac{q^{2}-1}{2 \chi(1)} r_{\chi},
\end{equation}
where \[r_{\chi} =\left[ \chi(d_{2}(1,1)) + \chi(d_{2}(-1,1)) + \chi(d_{2}(1,\Delta)) + \chi(d_{2}(-1,\Delta)) \right].\]
As discussed in \S~\ref{sec:sl2cc}, any $\chi \in \irr(G) \setminus \{U^{\pm},Z^{\pm}\}$ is a restriction of an irreducible character of $\gl(2,q)$. Using Table~\ref{tab:gl2qct}, we observe that for $\chi \in \irr(G) \setminus \{U^{\pm},Z^{\pm}\}$, $r_{\chi}$ must be divisible by $4$. From Table~\ref{tab:sl}, we observe that, for any $\chi \in \{U^{\pm},Z^{\pm}\}$, $r_{\chi}$ is even.
We also note that either $\chi(1) \mid \dfrac{q^{2}-1}{2} $ or $r_{\chi}=0$. Moreover, in the case $\chi \in \{U^{\pm},Z^{\pm}\}$, we have $\chi(1) \mid \dfrac{q^{2}-1}{4}$. These observations along with \eqref{eq:tchi-1} yield 
\[\theta_{\chi} \equiv \dfrac{\chi(t)}{\chi(1)}  \pmod{4},\] for all $\chi \in \irr(G)$.An application of Corollary~\ref{cor:pstccs} yields the following.

\begin{theorem}\label{thm:pstsl}
    Let $q$ be an odd prime power $p$ and let $G=\sli(2,q)$ and $t:= \begin{bmatrix}
    -1 & 0 \\
    0 & -1
\end{bmatrix}$. Let $\mfk{S}$ be the collection of all elements of $G$ of orders $p$ and $2p$. 

Then, a perfect state transfer occurs at some time $t$, between vertices $x$ and $xt$ of the connected graph $\Gamma:=\mrm{Cay}(G,\{t\} \cup \mfk{S})$, for all $x\in G$. \qed
\end{theorem}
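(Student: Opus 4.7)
The plan is to derive Theorem~\ref{thm:pstsl} directly from Corollary~\ref{cor:pstccs} with $a=1$. Most of the technical groundwork has been laid in the paragraphs immediately preceding the theorem, so what remains is essentially (i) describing $\{t\}\cup\mfk{S}$ as a well-formed connection set, (ii) verifying condition~(ii) of the corollary, and (iii) checking connectedness of $\Gamma$.

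For (i), writing $q=p^{k}$, an element of $\sli(2,q)$ has order $p$ iff it is a non-identity unipotent matrix, and has order $2p$ iff it is $t$ times such a matrix. By Table~\ref{tab:sl2cc}, $\mfk{S}$ is therefore the union of the four conjugacy classes $C(d_{2}(\varepsilon,u))$ with $\varepsilon\in\{\pm 1\}$ and $u\in\{1,\Delta\}$. Since $d_{2}(\varepsilon,u)^{-1}=d_{2}(\varepsilon,-u)$ and conjugation by a diagonal element of $\sli(2,q)$ sends $d_{2}(\varepsilon,u)$ to $d_{2}(\varepsilon,s^{2}u)$, the set $\mfk{S}$ is closed under inversion (the two classes sharing $\varepsilon$ either swap or each contain their own inverses, according to whether $-1\in(\bb{F}_{q}^{\times})^{2}$). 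Adjoining the central class $\{t\}$ preserves both properties, so the hypothesis of Corollary~\ref{cor:pstccs} is satisfied.

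For (ii), integrality of every $\theta_{\chi}$ has already been shown in the excerpt via the rational-classes argument of Lemma~\ref{lem:gl2inte}, while the congruence $\theta_{\chi}\equiv \chi(t)/\chi(1)\pmod{4}$ has been derived from~\eqref{eq:tchi-1} via the divisibility of $\frac{q^{2}-1}{2\chi(1)}r_{\chi}$ by $4$. The one subtle point in that derivation is that for the characters $U^{\pm}$ and $Z^{\pm}$ the quantity $r_{\chi}$ is only a priori even (by Table~\ref{tab:sl}), but in those cases $\frac{q^{2}-1}{2\chi(1)}$ equals $q\pm 1$, itself even since $q$ is odd, supplying the missing factor of $2$. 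Taking $a=1$ in Corollary~\ref{cor:pstccs} then yields $\theta_{\chi}\equiv 1\pmod{4}$ for $\chi\in\Phi^{+}$ and $\theta_{\chi}\equiv 3\pmod{4}$ for $\chi\in\Phi^{-}$, and PST between $x$ and $xt$ follows for every $x\in G$.

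Finally, for (iii), I would invoke the standard fact that $\sli(2,q)$ is generated by its non-identity unipotent elements: when $q>3$ the image of the generated subgroup in the simple group $\mathrm{PSL}(2,q)$ is a non-trivial normal subgroup and hence all of $\mathrm{PSL}(2,q)$, forcing the preimage in $\sli(2,q)$ to be $\sli(2,q)$ itself, and the case $q=3$ is an elementary check using the normal subgroup lattice of $\sli(2,3)$. Since $\mfk{S}$ already contains every non-identity unipotent, the connection set $\{t\}\cup\mfk{S}$ generates $G$ and $\Gamma$ is connected. The main obstacle of the argument is the mod-$4$ congruence, and within that the only genuinely $\sli$-specific difficulty is the behaviour of the four characters $U^{\pm},Z^{\pm}$ whose values are not inherited from $\gl(2,q)$; but as noted above this is handled by the parity of $q\pm 1$.
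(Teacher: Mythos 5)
Your proposal is correct and follows essentially the same route as the paper: apply Corollary~\ref{cor:pstccs} with the decomposition $\theta_{\chi}=\chi(t)/\chi(1)+\tfrac{q^{2}-1}{2\chi(1)}r_{\chi}$, observe that $4\mid r_{\chi}$ for characters restricted from $\gl(2,q)$, and handle $U^{\pm},Z^{\pm}$ by pairing the evenness of $r_{\chi}$ with the evenness of $\tfrac{q^{2}-1}{2\chi(1)}=q\pm1$ (the paper phrases this as $\chi(1)\mid\tfrac{q^{2}-1}{4}$). Your explicit checks of inverse-closure and of connectedness via generation by unipotents fill in details the paper leaves implicit, but the argument is the same.
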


\begin{remark} We remark that the $X:=\mrm{Cay}(\gl(2,q),\{t\} \cup \mfk{S})$ is a graph belonging to the conjugacy scheme on $\gl(2,q)$ and it satisfies all the conditions in Corollary~\ref{cor:pstccs}. However, this graph is  disconnected --- it is the
disjoint union of $q-1$ copies of the graph $\Gamma$ in the above theorem. For this reason, we do not consider this graph in the case of $\gl(2,q)$.
\end{remark}

\section{Proof of Theorem~\ref{thm:orb}}\label{sec:orbproof}
As in the premise of Theorem~\ref{thm:orb}, let $q$ be a prime power satisfying $q\equiv 3\pmod{4}$. For ease of notation, we set $G:=\gl(2,q^{2})$ and $H:=\gl(2,q)$. We shall apply
the results of \S~\ref{sec:repandccgl} to the group $G$ rather than to
$H$, and so it is important to note that $q$ must be replaced by $q^2$. We set $B:=B(q^2)$.

We describe a graph belonging to $G\sslash H$ which admits PST. 

\begin{con}\label{con:ospst} 
Let $z \in Z(G)\setminus H$ be such that $z^{2}H=H$. Given $x,y \in \F_{q^{2}}^{\times}$, define $m_{x,y}:= \begin{bmatrix}
x & 0 \\
0 & y
\end{bmatrix}$. Define $\Gamma_{q}$ to be the graph whose vertex set is $G/H$ in which $rH$ is adjacent to $sH$ if and only if $r^{-1}s \in \left( HzH \cup \bigcup\limits_{x,y \in \F_{q^{2}}^{\times}\ \&\ x\neq y} Hm_{x,y}H\right)$.
\end{con}

We will show that $\Gamma_{q}$ admits PST. To check this, we need to compute its  spectrum. We first gather some useful results about $G\sslash H$.
Let $F:G \to G$ denote the map defined by $[x_{ij}] \mapsto [x_{ij}^{q}]$. By Theorem~3.4 of \cite{gow}, for any $x,y \in G$, we have $HxH=HyH$ if and only if $x^{-1}F(x)$ is conjugate to $y^{-1}F(y)$.
Using Theorem~3.4 of \cite{gow}, it follows that $Hm_{x,y}H=Hm_{a,b}H$ if and only if $\{x\F_{q}^{\times}, y\F_{q}^{\times}\}=\{a\F_{q}^{\times}, b\F_{q}^{\times}\}$. Let $\Delta$ be a complete set of coset representatives for $\F_{q}^{\times}$ in $\F_{q^{2}}^{\times}$.  For ease of notation, for all $x, y \in \F_{q^{2}}^{\times}$, we denote $A_{m_{x,y}}$ by $A_{x,y}$. The adjacency matrix of $\Gamma_{q}$ is
\begin{equation}\label{eq:gammaqadjacency}
A := A_{z} + \dfrac{1}{2}\sum\limits_{\{(x,y) \in \Delta \times \Delta\ :\ x\neq y\} } A_{x,y}   
\end{equation}      

We shall now apply  Corollary~\ref{cor:ospst} to our graph, with $z$ in the role of $t$ in that theorem.
For $\chi\in\irr({G\sslash H})$, let $\theta_\chi$ be the eigenvalue of $A$ defined by $A\mathbf E_\chi=\theta_\chi\mathbf E_\chi$. 
Note that the degree of $\Gamma_{q}$ is $\theta_1$ and that $\theta_\chi$s are not necessarily distinct.
We have $\theta(zH)/|H| =\pm1$, and $\theta_{\chi} \in\Phi^{\pm}$ with the same sign.

 From \eqref{eq:osct}, we recall that $A_{z}\mbf{E}_{\theta} =\theta(zH)/|H| \mbf{E}_{\theta}$. Suppose that it can be established that every eigenvalue of $A-A_z$ is an integer divisible by 4.
Then $\theta_\chi\in\Phi^+$ if and only if $\theta_\chi\equiv 1\pmod 4$
and  $\theta_\chi\in\Phi^-$ if and only if $\theta_\chi\equiv 3\pmod 4$. Then, by Corollary~\ref{cor:ospst} we have PST between any pair of vertices related by $A_z$.
Therefore, to show that $\Gamma_{q}$ admits PST, we are reduced to proving the following proposition.

\begin{proposition}\label{prop:0m4evalue}
Every eigenvalue of  
\[D:=\dfrac{1}{2}\sum\limits_{\{(x,y) \in \Delta \times \Delta\ :\ x\neq y\} } A_{x,y}\] is an integer and is divisible by $4$.
\end{proposition}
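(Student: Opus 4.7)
The plan is to apply \eqref{eq:osct} to each $\chi\in\irr(G\sslash H)$ and show that
\[\theta_\chi^D=\sum_{d}\frac{\chi(d^{-1}H)}{|H\cap H^d|},\]
summed over a set of double-coset representatives $d=m_{x,y}$ with $x,y\in\Delta$, $x\ne y$, is an integer divisible by $4$. The first step is to establish that for any pair $x,y\in\F_{q^2}^\times$ with $x/y\notin\F_q^\times$, the stabilizer $H\cap m_{x,y}Hm_{x,y}^{-1}$ is exactly the diagonal torus of $H$, of order $(q-1)^2$. Indeed, conjugating an element $h\in H$ by $m_{x,y}$ rescales its off-diagonal entries by $x/y$ and $y/x$, forcing those entries to vanish if the result is to remain in $H$. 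Consequently every eigenvalue takes the uniform form
\[
\theta_\chi^D \;=\; \frac{1}{2(q-1)^2}\sum_{\substack{x,y\in\Delta\\x\ne y}}\sum_{h\in H}\chi(m_{x,y}^{-1}h).
\]

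Next, I would identify which $\chi\in\irr(G)$ lie in $\irr(G\sslash H)$. This is a multiplicity-free branching problem for the pair $(\gl(2,q^2),\gl(2,q))$, and can be resolved by applying Mackey's formula to the parabolic induction defining the characters of $G$, using that $H$ has exactly two orbits on $\PP^1(\F_{q^2})$: the $\F_q$-rational points and their complement. This yields explicit conditions selecting certain linear and Steinberg characters, specific principal series $I[\chi_1,\chi_2]$ satisfying a base-change condition, and cuspidal $\pi[\mu]$ for $\mu$ trivial on a prescribed subgroup of $\F_{q^4}^\times$. The classification of $H$-distinguished irreducibles is standard and also implicit in Gow's theorem, which underpins the scheme structure.

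Then for each of the four types, evaluate the inner sum $\sum_{h\in H}\chi(m_{x,y}^{-1}h)$ using the character formulas of \S\ref{sec:repandccgl} (with $q$ replaced by $q^2$). Partition $h\in H$ by the $G$-conjugacy class of $m_{x,y}^{-1}h$, determined by the trace $x^{-1}a+y^{-1}e$, the determinant $(ae-bc)/(xy)$, and whether the characteristic polynomial splits over $\F_{q^2}$. For linear characters $\chi=\lambda\circ\det$ the sum collapses by orthogonality, yielding small polynomial expressions in $q$ with factors $\lambda(-1)$; this is structurally identical to the arguments in \S\ref{sub:glpst}. Steinberg characters differ only by cancellation on unipotent classes; principal series contribute a combinatorial sum over the two off-diagonal terms; cuspidal characters yield Gauss-sum expressions. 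After summing over $(x,y)\in\Delta\times\Delta$ with $x\ne y$, each $\theta_\chi^D$ is a polynomial expression in $q$ with rational coefficients, and integrality follows by Galois symmetrization (pairing $\mu$ with $\mu^{q^2}$ in the cuspidal case, where a priori irrational Gauss sums appear).

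The final step, divisibility by $4$, uses $q\equiv 3\pmod 4$, so that $q+1\equiv 0\pmod 4$ and $q^2-1\equiv 0\pmod 8$. The coset sum over $\Delta$ has cardinality $q+1$, while $|H|/(q-1)^2=q(q+1)$; each closed formula therefore carries an explicit factor of $q(q+1)$ or $q(q+1)^2$, and combined with $4\mid(q+1)$ this yields the claimed divisibility. The main obstacle is the cuspidal case: apart from the rationality issue just mentioned, one must track the $2$-adic valuation of a Gauss-type sum over $\F_q$, but once the contributions from $\{\mu,\mu^{q^2}\}$ are collected, the factor $(q+1)$ coming from the quotient $\F_{q^4}^\times/\F_{q^2}^\times$ resurfaces and produces the required factor of $4$.
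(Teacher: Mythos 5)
Your overall architecture matches the paper's: reduce to the eigenvalues $\theta_\chi^D$ for $\chi\in\irr(G\sslash H)$ via \eqref{eq:osct}, observe that $H\cap H^{m_{x,y}}$ is the diagonal torus of order $(q-1)^2$ (your argument for this is correct), and exploit $4\mid q+1$. But there is a concrete error in your determination of $\irr(G\sslash H)$: you assert that certain cuspidal characters $\pi[\mu]$ of $G=\gl(2,q^2)$ occur in $\one_H^G$, and you designate their contribution --- irrational Gauss sums whose $2$-adic valuation must be tracked --- as the main obstacle, resolving it only by an unverified appeal to Galois symmetrization. In fact no cuspidal character of $G$ is $H$-distinguished. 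Every element of $H=\gl(2,q)$ has its eigenvalues in $\F_{q^2}$, so no element of $H$ lies in a class of type $c_4$ of $G$; reading off Table~\ref{tab:gl2qct} (with $q$ replaced by $q^2$), the sum of $\pi[\mu]$ over $H$ is $\sum_{x\in\F_q^\times}\bigl((q^2-1)\mu(x)-(q^2-1)\mu(x)\bigr)=0$, since for each $x\in\F_q^\times$ there are exactly $q^2-1$ elements of $H$ that are $G$-conjugate to $c_2(x)$. A degree count confirms this: the linear characters $\lambda$ with $\F_q^\times\subseteq\mrm{Ker}(\lambda)$, the corresponding $S_\lambda$, and the principal series $I[\theta_1,\theta_2]$ with $\bo(q)$ or $C$ in the kernel of $\theta$ already contribute total degree $q(q+1)(q^2+1)=|G/H|$. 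So the case you single out as hardest does not arise, and your treatment of it would in any event not constitute a proof.

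For the cases that do arise, your plan (partition $h\in H$ by the $G$-class of $m_{x,y}^{-1}h$ via trace and determinant) is workable in principle but is left entirely unexecuted, and the divisibility claim is asserted rather than derived: you say each closed formula ``carries an explicit factor of $q(q+1)$ or $q(q+1)^2$,'' but that is not where the factor of $4$ actually comes from in the principal-series case. There the eigenvalue is an integer prefactor, lying in $\{0,\,q,\,(q^2-1)/2,\,q+(q^2-1)/2\}$, multiplied by $\sum_{x\neq y}\theta_1(x)\theta_2(y)$ taken over $\Delta\times\Delta$, and it is this coset sum that is divisible by $q+1$ (being a difference of $0$ or $(q+1)^2$ and $0$ or $(q+1)$). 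Without carrying out the character-sum evaluation --- the paper does this by realizing $I[\theta]$ explicitly on $\PP^1(\F_{q^2})$ and analyzing the two $H$-orbits there --- the congruence $\theta_\chi^D\equiv 0\pmod 4$ is not established.
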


As $D$ is in the Bose--Mesner algebra $\bb{C}[G\sslash H]$, any eigenspace of $D$ is spanned by columns of a collection of matrices in $\{\mb{E}_{\chi}\ :\ \chi \in \irr(G\sslash H)\}$. 
To compute the eigenvalues of $D$ we use the formula \eqref{eq:osct}. As is evident from the formula, it is beneficial to  compute character  sums of the form $\chi(gH)$, where $g \in \{m_{x,y}\ : (x,y) \in \Delta^{2}\ \&\ x\neq y\}$.
\subsection{Character sums over cosets of $H$.} 
We recall that $B$ is the subgroup of upper triangular matrices in $G$.   
Let $T$
be the subgroup of diagonal matrices and $U$ be the subgroup of unipotent matrices. Note that $T\cong \F_{q^{2}}^{\times} \times \F_{q^{2}}^{\times}$ and that $B/U \cong T$. Thus, any character of $T$ extends to a character of $B$. Given $\theta \in \widehat{T}$, let $I[\theta]:=\mrm{Ind}_{H}^{G}(\theta)$. We will now compute $I[\theta](gH)$. 

We start by fixing a matrix representation of $G$ that affords $I[\theta]$ as its character. 

Define 
$\omega_\beta:=\begin{bmatrix}
1 & 0 \\
\beta & 1
\end{bmatrix}$ for $\beta\in \F_{q^{2}}$ and $\omega_\infty:=\begin{bmatrix}
0 & 1 \\
1 & 0
\end{bmatrix}$.  Let $\PP=\F_{q^{2}}\cup \{\infty\}$ be the projective line over $\F_{q^{2}}$
The set 
\[
\Omega :=\left\{ \omega_\beta\ :\ \beta \in \PP \right\} 
\]
is a complete set of left coset representatives for $B$ in $G$. 

 Given $g \in G$ and $\beta \in \PP$, the elements
 $\sigma_g(\beta)\in \PP$  and $b(g,\beta)\in B$ are uniquely
 defined by the equation
 \begin{equation}\label{cosetaction}
 g\omega_\beta=\omega_{\sigma_g(\beta)}b(g,\beta).
 \end{equation}
The map $g \to \sigma_{g}$ defines a transitive action of $G$ on $\PP$, with $B$ as the stabilizer of $0$.

 Now given $\theta \in \widehat{T}$, let $P_{\theta}(g)$ to be the $|\PP| \times |\PP|$ matrix satisfying 
 \[P_{\theta}(g)_{\beta, \alpha } = \delta_{\sigma_{g}(\alpha),\beta} \theta(b(g, \alpha)),\] 
 for all $\alpha,\beta \in \PP$. The map $P: G \to \gl(|\PP|,\bb{C})$ is a representation affording $I[\theta]$ as its character. 
 We observe that 
\[I[\theta](gH)=\mrm{Tr}\left(P_{\theta}(g)P_{\theta}(H)\right).\] 

We first start computing $M_{\theta}:=P_{\theta}(H)$. Given $\alpha$, $\beta \in \PP$, define $H_{\alpha \to \beta} :=\{h\in H \ :\ \sigma_{h}(\alpha) =\beta \}$.  Using the definition of $P_{\theta}$, we see that 

\begin{equation}\label{eq:initial}
M_{\theta}(\beta, \alpha) = \sum\limits_{h \in H_{\alpha\to \beta}} \theta(b(h,{\alpha})).
\end{equation} 

Thus, if $\alpha, \beta \in \PP$ are not in the same $H$-orbit, we have  $M_{\theta}(\beta,\alpha)=0$. We consider action of $H$ on $\PP$. Note that \[\bigcup\limits_{\beta \in \F_{q}} \omega_\beta B \cup \omega_{\infty} B \subset HB.\] As $|HB|= \dfrac{|H||B|}{|\bo(q)|}$,  (where $\bo(q)=H \cap B$ is the subgroup of upper triangular matrices in $H$), we see that 
\[\bigcup\limits_{\beta \in \F_{q}} \omega_\beta B \cup \omega_{\infty}  B = HB,\] and thus the $H$-orbit containing $0$ is 

$\PP_1:=\F_q\cup\{\infty\}$. Given $\alpha$, $\beta\in \F_{q^{2}} \setminus \F_{q}$, there is a unique  $(c_{\alpha, \beta}, d_{\alpha, \beta}) \in \F^{2}_{q}$, with 
$d_{\alpha, \beta}\neq 0$, 
such that $\beta= c_{\alpha, \beta} + d_{\alpha, \beta} \alpha$. 
Let $h_{\alpha,\beta}:=\begin{bmatrix}
    1 & 0\\
    c_{\alpha, \beta} & d_{\alpha, \beta}
\end{bmatrix}$. Then we have

            \begin{equation}\label{elementaction} h_{\alpha, \beta}\omega_\alpha=\omega_\beta\begin{bmatrix}1&0\\0&d_{\alpha,\beta}\end{bmatrix},
            \end{equation}
so $\sigma_{h_{\alpha, \beta}}(\alpha)= \beta$ and $b(h_{\alpha, \beta}, \alpha)=\begin{bmatrix}1&0\\0&d_{\alpha, \beta}\end{bmatrix}$. This shows that $H$ is transitive on $\PP_{2}:=\PP\setminus \PP_{1}$. We now derive formulae for entries of $M_{\theta}$. We define $C$ to be the subgroup $\left\{ \begin{bmatrix}
a & 0 \\
0 & a^q
\end{bmatrix}\ :\ a \in \F_{q^{2}}^{\times} \right\}$ of $B$. 
          
\begin{lemma}\label{lem:charsumH}
Given $\alpha,\beta \in \PP$, we have 
\[M_{\theta}(\beta,\alpha) = \begin{cases}
\theta (\bo(q)) & \text{if $\alpha, \beta \in \F_{q} \cup \{\infty\}$,} \\
\theta\left(\begin{bmatrix}
1 & 0 \\
0 & d_{\alpha,\beta}
\end{bmatrix} \right) \theta(C) & \text{if $\alpha, \beta \in \F_{q^{2}} \setminus \F_{q}$,} \\
 0 & \text{otherwise.}
\end{cases}  \]
\end{lemma}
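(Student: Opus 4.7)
From \eqref{eq:initial}, $M_\theta(\beta,\alpha)=\sum_{h\in H_{\alpha\to\beta}}\theta(b(h,\alpha))$. The discussion preceding the lemma shows that $\PP$ has exactly two $H$-orbits, namely $\PP_1$ and $\PP_2$, so if $\alpha,\beta$ lie in different orbits the set $H_{\alpha\to\beta}$ is empty and the third case of the lemma is immediate. Otherwise, fix any $h_0\in H_{\alpha\to\beta}$, so that $H_{\alpha\to\beta}=h_0H_\alpha$, and apply \eqref{cosetaction} twice to obtain the cocycle identity $b(h_0h,\alpha)=b(h_0,\alpha)\,b(h,\alpha)$ for $h\in H_\alpha$. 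This reduces the computation to
\[
M_\theta(\beta,\alpha)=\theta(b(h_0,\alpha))\cdot\sum_{h\in H_\alpha}\theta(b(h,\alpha)).
\]

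For the $\PP_2$ case, fix $\alpha$ with minimal polynomial $x^2-\alpha_1 x-\alpha_0$ over $\F_q$. Writing $h=\begin{bmatrix}a&b\\c&d\end{bmatrix}$ and using $\sigma_h(\alpha)=(c+d\alpha)/(a+b\alpha)$, the equation $\sigma_h(\alpha)=\alpha$ together with $\alpha^2=\alpha_0+\alpha_1\alpha$ forces $c=b\alpha_0$ and $d=a+b\alpha_1$, so $(a,b)\mapsto a+b\alpha$ identifies $H_\alpha$ with $\F_{q^2}^\times$. A direct expansion gives $\det h=N_{\F_{q^2}/\F_q}(a+b\alpha)=(a+b\alpha)(a+b\alpha)^q$, and the general form of $b(h,\alpha)$ read off from \eqref{cosetaction} becomes
\[
b(h,\alpha)=\begin{bmatrix}a+b\alpha & b\\ 0 & (a+b\alpha)^q\end{bmatrix}.
\]
Since $\theta$ is trivial on $U$, the stabilizer sum equals $\theta(C)$ by the very definition of $C$. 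Taking $h_0:=h_{\alpha,\beta}$ as in \eqref{elementaction} supplies the prefactor $\theta\bigl(\begin{bmatrix}1&0\\0&d_{\alpha,\beta}\end{bmatrix}\bigr)$, confirming the second case.

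For the $\PP_1$ case I first treat $\alpha=0$: since $\omega_0=I$ one has $H_0=H\cap B=\bo(q)$ and $b(h,0)=h$, so the stabilizer sum equals $\theta(\bo(q))$; for any $\beta\in\PP_1$ the element $\omega_\beta$ lies in $H$, and $\omega_\beta\omega_0=\omega_\beta\cdot I$ gives $b(\omega_\beta,0)=I$, hence $M_\theta(\beta,0)=\theta(\bo(q))$. To handle general $\alpha\in\PP_1$, I invoke the right-invariance $M_\theta\,P_\theta(h)=M_\theta$, which is immediate from $Hh=H$ and the representation property. Reading off the $(\beta,\alpha)$-entry gives the functional equation
\[
M_\theta(\beta,\alpha)=M_\theta(\beta,\sigma_h(\alpha))\,\theta(b(h,\alpha)),\qquad h\in H;
\]
the choice $h=\omega_\alpha^{-1}\in H$ satisfies $\sigma_{\omega_\alpha^{-1}}(\alpha)=0$ and $b(\omega_\alpha^{-1},\alpha)=I$, so the display collapses to $M_\theta(\beta,\alpha)=M_\theta(\beta,0)=\theta(\bo(q))$.

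The only non-routine ingredient is the $\PP_2$ stabilizer computation, where one must identify $H_\alpha$ with $\F_{q^2}^\times$ via $(a,b)\leftrightarrow a+b\alpha$ and recognize $\det h$ as the norm in order to simplify $\det h/(a+b\alpha)$ to the conjugate $(a+b\alpha)^q$; once this is in place, the remaining pieces reduce to coset bookkeeping and the right $H$-invariance of $M_\theta$.
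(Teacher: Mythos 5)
Your proof is correct and follows essentially the same route as the paper: split by $H$-orbit, factor the sum over $H_{\alpha\to\beta}$ into a single coset-representative value times a sum over the stabilizer $H_\alpha$, and for $\alpha\in\PP_2$ compute $H_\alpha$ explicitly via $c=b\alpha_0$, $d=a+b\alpha_1$ and identify the conjugated stabilizer with $C$ through $(a,b)\mapsto a+b\alpha$ and the Galois conjugate $(a+b\alpha)^q$. Your cocycle identity and the right-invariance reduction of the $\PP_1$ case to $\alpha=0$ are only cosmetic variants of the paper's set-theoretic argument $S=b(z,\alpha)(H_{\alpha\to\alpha})^{\omega_\alpha}$ and its counting argument showing $S=\bo(q)$.
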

\begin{proof}
The equality in the last case follows from \eqref{eq:initial}. Assume $\alpha, \beta $ are in the same $H$-orbit. From \eqref{eq:initial}, we have 
\begin{align*}
M_{\theta}(\beta,\alpha) &= \sum\limits_{h \in H_{{\alpha} \to {\beta}}} \theta(b(h,{\alpha})).  
\end{align*}

Now consider the set $S :=\{b(h,{\alpha})\ :\ h \in H_{{\alpha}\to \beta}\}$. Given $x,y \in H_{\alpha\to \beta}$, we have $b(x,\alpha)={\omega_\beta}^{-1}x \omega_\alpha$ and $b(y,\alpha)={\omega_\beta}^{-1}y \omega_\alpha$, and thus, 
\begin{equation}\label{eq:b}
b(x,\alpha)^{-1}b(y,\alpha) ={\omega_\alpha}^{-1}x^{-1}y \omega_\alpha \in (H_{\alpha\to \alpha})^{\omega_\alpha}. 
\end{equation}
Fix an element $z \in H_{\alpha \to \beta}$. Using \eqref{eq:b}, we have \[S \subset b(z,\alpha)  (H_{\alpha\to \alpha})^{\omega_\alpha}.\] Again from \eqref{eq:b}, we have $b(x,\alpha)=b(y,\alpha)$ if and only if $x=y$. Thus $|S|=|H_{{\alpha\to \alpha}}|$, and thus

\[S = b(z,\alpha)  (H_{\alpha\to \alpha})^{\omega_\alpha}.\]
 
 This shows that
 
 \begin{align}\label{eq:general}
 M_{\theta}(\beta,\alpha) &= \theta(S) \notag \\
 &= \theta(b(z,\alpha)) \theta(  (H_{\alpha\to \alpha})^{\omega_\alpha})
 \end{align}

Case 1: Assume that $\alpha, \beta \in \F_{q} \cup \{\infty\}$. In this case, $\omega_\alpha$ and $\omega_\beta$ lie in $H$. Thus, for any $h \in H_{\alpha \to \beta}$, we have $b(h, \alpha)= {\omega_\beta}^{-1} h \omega_\alpha \in H$, and therefore $S \subset H$. We also have $(H_{\alpha\to \alpha})^{\omega_\alpha}= b(z,\alpha)^{-1}S \in H$. Since $(H_{\alpha\to \alpha})^{\omega_\alpha} \subset B$, we have $(H_{\alpha\to \alpha})^{\omega_\alpha} \subset B\cap H= \bo(q)$. As $H_{\alpha\to \alpha}$ is the stabilizer of $\alpha$ with respect to the transitive action of $H$ on $\PP_{1}$, we have $|H_{\alpha\to \alpha}|=|\bo(q)|$. Since $b(z,\alpha) \in B \cap H =\bo(q)$, we have $S= \bo(q)$. This proves the first equality in the lemma.
 
Case 2: Assume that $\alpha, \beta \in \F_{q^{2}} \setminus \F_{q}$. Setting $z=h_{\alpha,\beta}$ in \eqref{eq:general}, we have 
 \[M_{\theta}(\beta,\alpha)= \theta\left(\begin{bmatrix}
1 & 0 \\
0 & d_{\alpha,\beta}
\end{bmatrix} \right) \theta(  (H_{\alpha\to \alpha})^{\omega_\alpha}) \]

Let $s,t \in \F_{q}$ be such that $\alpha^{2}=s+t\alpha$. We note that $B$ is the set of elements fixing the projective point $0 \in \bb{P}$. Therefore, given $x:=\begin{bmatrix}
a & b\\
c & d
\end{bmatrix} \in H$, we have $x \in H_{\alpha \to \alpha}$ if and only if $x^{\omega_\alpha} \in B$. Thus, $x \in H_{\alpha \to \alpha}$ if and only if 

 \[\begin{bmatrix}
 a+ b \alpha & b \\
 c-a\alpha +d \alpha -b \alpha^{2} & d-b\alpha
 \end{bmatrix} = \begin{bmatrix}
 a+ b \alpha & b \\
 (c-bs)+(d-a-bt)\alpha & d-b\alpha 
 \end{bmatrix} \in B.\]
 
 As $\{1,\alpha\}$ is a basis for $\F_{q^{2}}$, we conclude that $x \in H_{\alpha \to \alpha}$ if and only if (i) $c=bs$, (ii) $d=a+bt$, and (iii) $(a,b) \neq (0, 0)$. We note that (i) and (ii) ensure that $x^{\omega_\alpha}$ is an upper triangular matrix, and (iii) ensures that $x^{\omega_\alpha}$ is invertible. We have 
 \[(H_{\alpha \to \alpha})^{\omega_\alpha} = \left\{ \begin{bmatrix}
 a +b \alpha & b \\
 0 & (a+bt)-b\alpha
\end{bmatrix} \ :\ a,b \in \F_{q}\ \&\ (a,b) \neq (0,0)  \right\}.\] 

We have
\[\theta\left((H_{\alpha \to \alpha})^{\omega_\alpha} \right) = \theta \left( \left\{ (a+b\alpha,(a+bt)-b\alpha)\ :\ a,b \in \F_{q}\ \&\ (a,b) \neq (0,0) \right\} \right)\]

The trace and determinant of each matrix in $(H_{\alpha \to \alpha})^{\omega_\alpha}$ is in $\F_{q}$. Therefore, given $b\neq 0$, every pair of the form $(a+b\alpha,(a+bt)-b\alpha)$ forms the solution set of a degree $2$ irreducible polynomial over $\F_{q}$. Since roots of degree $2$ irreducible polynomials are conjugate pairs, we can now conclude that
\[\left\{ (a+b\alpha,(a+bt)-b\alpha)\ :\ a,b \in \F_{q}\ \&\ (a,b) \neq (0,0) \right\}  = \{ (z, z^{q})\ :\ z \in \F_{q^{2}}^{\times} \},\] and thus that
\[\theta\left((H_{\alpha \to \alpha})^{\omega_\alpha} \right) =\theta(C).\] This concludes the proof.        
\end{proof}
Using the above lemma and $I[\theta](gH)=\mrm{Tr}\left(P_{\theta}(g)M_{\theta}\right)$, we obtain the following result. 

\begin{cor}\label{cor:chsums}
\[I[{\theta}](g H) = \sum\limits_{\{\beta \in \PP_{1} \ :\ \sigma_{g}(\beta) \in \PP_{1}\}} \theta(b(g,\beta))\theta(\bo(q)) + \sum\limits_{\{\beta \in \PP_{2} \ :\ \sigma_{g}(\beta) \in \PP_{2}\}} \theta(b(g, \beta)) \theta\left(\begin{bmatrix}
1 & 0 \\
0 & d_{ \sigma_{g}(\beta), \beta}
\end{bmatrix} \right) \theta(C).\]
\end{cor}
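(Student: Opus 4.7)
The plan is to derive this corollary as a direct consequence of Lemma~\ref{lem:charsumH} together with the trace formula $I[\theta](gH) = \mathrm{Tr}\bigl(P_\theta(g) M_\theta\bigr)$ that was noted just before that lemma. No new representation-theoretic input is required; the argument is essentially a matter of expanding a matrix product and then reading off the entries of $M_\theta$.

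First I would write out the trace as
\[
\mathrm{Tr}\bigl(P_\theta(g) M_\theta\bigr) = \sum_{\beta\in\PP}\sum_{\alpha\in\PP} P_\theta(g)(\beta,\alpha)\, M_\theta(\alpha,\beta).
\]
The key observation is that by the defining formula $P_\theta(g)(\beta,\alpha) = \delta_{\sigma_g(\alpha),\beta}\,\theta(b(g,\alpha))$, the only surviving terms are those with $\beta=\sigma_g(\alpha)$. Renaming $\alpha$ as $\beta$, the double sum collapses to
\[
I[\theta](gH) = \sum_{\beta\in\PP}\theta(b(g,\beta))\, M_\theta\bigl(\beta,\sigma_g(\beta)\bigr).
\]

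Next I would invoke Lemma~\ref{lem:charsumH} to evaluate $M_\theta(\beta,\sigma_g(\beta))$. Since $\PP_1 = \F_q\cup\{\infty\}$ and $\PP_2 = \F_{q^2}\setminus\F_q$ are precisely the two $H$-orbits on $\PP$ established in the paragraph preceding that lemma, the entry $M_\theta(\beta,\sigma_g(\beta))$ vanishes whenever $\beta$ and $\sigma_g(\beta)$ lie in different orbits. Thus the sum naturally splits as
\[
I[\theta](gH) = \!\!\sum_{\substack{\beta\in\PP_1 \\ \sigma_g(\beta)\in\PP_1}}\!\! \theta(b(g,\beta))\,M_\theta(\beta,\sigma_g(\beta)) + \!\!\sum_{\substack{\beta\in\PP_2 \\ \sigma_g(\beta)\in\PP_2}}\!\! \theta(b(g,\beta))\,M_\theta(\beta,\sigma_g(\beta)),
\]
and substituting the first case of the lemma ($\theta(\bo(q))$) into the first sum and the second case (involving $d_{\alpha',\beta'}$ where $\beta'$ is the row and $\alpha'$ is the column) into the second sum yields the claimed identity.

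The only subtle point, and what I would flag as the main place to be careful, is matching the subscript convention in $d_{\alpha,\beta}$ to the roles of row and column in $M_\theta$. In Lemma~\ref{lem:charsumH} the entry $M_\theta(\beta,\alpha)$ (row $\beta$, column $\alpha$) is expressed using $d_{\alpha,\beta}$, i.e.\ $d_{\text{column},\text{row}}$. Applying this with row $=\beta$ and column $=\sigma_g(\beta)$ produces $d_{\sigma_g(\beta),\beta}$, which is exactly what appears in the statement. Apart from this indexing check, the proof is purely formal.
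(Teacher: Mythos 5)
Your proposal is correct and is exactly the argument the paper intends: the paper derives the corollary in one line from the trace identity $I[\theta](gH)=\mathrm{Tr}(P_\theta(g)M_\theta)$ and Lemma~\ref{lem:charsumH}, and your expansion of the trace, collapse via the Kronecker delta to $\sum_{\beta}\theta(b(g,\beta))M_\theta(\beta,\sigma_g(\beta))$, and careful matching of the $d_{\mathrm{column},\mathrm{row}}$ indexing fill in precisely the omitted details.
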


Given $x,y \in \F_{q^{2}}^{\times}$ with $x^{-1}y \notin \F_{q}^{\times}$, we now compute $I[{\theta}](m_{x,y} H)$ (recall that $m_{x,y}= \mrm{diag}(x,y)$). As $\theta$ is a linear character of $B$, it is irreducible when restricted to both $\bo(q)$ and $C$. From the above corollary, it follows that $I[\theta](gH)=0$ unless either $\bo(q) \subset \mrm{Ker}(\theta)$ or $C \subset \mrm{Ker}(\theta)$. Let us now assume that $\theta$ is such that either $\bo(q) \subset \mrm{Ker}(\theta)$ or $C \subset \mrm{Ker}(\theta)$. Let $\theta_{1},\theta_{2} \in \widehat{\F_{q^{2}}^{\times}}$ such that $\theta=[\theta_{1},\theta_{2}]$. We note that $\sigma_{m_{x,y}}\left( \beta \right) = x^{-1}y \beta$ and $b(m_{x,y}, \beta)=m_{x,y}$, for all $\beta \in \F_{q^{2}}$; and that $\sigma_{m_{x,y}}\left( \infty \right)= \infty$ and $b(m_{x,y}, \infty)=m_{y,x}$. Therefore,
\begin{align*}
\{\beta \in \PP_{1} \ :\ \sigma_{g}(\beta) \in \PP_{1}\} &= \left\{0,\infty   \right\} \\
\{\beta \in \PP_{2} \ :\ \sigma_{g}(\beta) \in \PP_{2}\} & = \F_{q^{2}}^\times \setminus \left(\F_{q}^\times \cup y^{-1}x\F_{q}^{\times} \right).
\end{align*}
These observations along with Corollary~\ref{cor:chsums} yield \begin{equation}\label{eq:dintchsum}
\begin{split}
I[\theta](m_{x,y}H) =  {} & (\theta_{1}(x)\theta_{2}(y)+\theta_{1}(y)\theta_{2}(x)) \theta(\bo(q)) \\{} & + \theta_{1}(x)\theta_{2}(y)\theta(C) \left[\sum\limits_{\alpha \in \F_{q^{2}}^\times \setminus \left(\F_{q}^\times \cup y^{-1}x\F_{q}^{\times} \right)} \theta_{2}(d_{ x^{-1}y\alpha,\alpha}) \right].
\end{split}
\end{equation}
We now simplify this sum.
\begin{lemma}
Let $x,y \in\F_{q^{2}}^{\times}$ with $x^{-1}y \notin \F_{q}^{\times}$ and let $\chi \in \widehat{\F_{q^{2}}^{\times}}$, then 
\[\sum\limits_{\alpha \in \F_{q^{2}}^\times \setminus (\left(\F_{q}^\times\cup y^{-1}x\F_{q}^{\times} \right)} \chi(d_{ x^{-1}y\alpha,\alpha})=  (q-1)\chi(\F_{q^{\times}}).\] 
\end{lemma}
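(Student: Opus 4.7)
The plan is to reduce the character sum to a routine orthogonality computation over $\F_q$ by giving an explicit coordinate expression for $d_{x^{-1}y\alpha,\alpha}$. Fix an $\F_q$-basis $\{1,\tau\}$ of $\F_{q^2}$ with $\tau^2=\Delta\in\F_q$ a non-square; this is possible since $q$ is odd. Set $\lambda:=x^{-1}y$; the hypothesis $\lambda\notin\F_q^\times$ lets us write $\lambda=\mu+\nu\tau$ with $\mu\in\F_q$ and $\nu\in\F_q^\times$. Every $\alpha\in\F_{q^2}\setminus\F_q$ decomposes uniquely as $\alpha=a+b\tau$ with $a\in\F_q$ and $b\in\F_q^\times$.

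The heart of the argument is a one-line bilinear expansion. Multiplying out gives $\lambda\alpha=(\mu a+\nu b\Delta)+(\mu b+\nu a)\tau$, which exhibits $\lambda\alpha$ in the basis $\{1,\alpha\}$ with coefficient of $\alpha$ equal to $\mu+\nu(a/b)$. Inverting this expression (recalling that $d_{\lambda\alpha,\alpha}$ is, by definition, the coefficient of $\lambda\alpha$ when $\alpha$ is written in the basis $\{1,\lambda\alpha\}$) gives
\[
d_{\lambda\alpha,\alpha}=\bigl(\mu+\nu(a/b)\bigr)^{-1}.
\]
Thus $d_{\lambda\alpha,\alpha}$ depends only on the ratio $r:=a/b\in\F_q$. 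The excluded set $\F_q^\times\cup\lambda^{-1}\F_q^\times$ corresponds exactly to the degenerate cases: $\alpha\in\F_q^\times$ forces $b=0$ (so $r$ is undefined), while $\alpha\in\lambda^{-1}\F_q^\times$ is equivalent via $\mu b+\nu a=0$ to $r=-\mu/\nu$, which is the pole of the formula above.

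For each admissible $r\in\F_q\setminus\{-\mu/\nu\}$ the fiber $\{rb+b\tau:b\in\F_q^\times\}$ contributes $q-1$ identical terms. The map $r\mapsto(\mu+\nu r)^{-1}$ is a bijection from $\F_q\setminus\{-\mu/\nu\}$ onto $\F_q^\times$, so grouping by fibers gives
\[
\sum_\alpha\chi(d_{\lambda\alpha,\alpha})=(q-1)\sum_{r\ne-\mu/\nu}\chi\bigl((\mu+\nu r)^{-1}\bigr)=(q-1)\sum_{s\in\F_q^\times}\chi(s)=(q-1)\chi(\F_q^\times),
\]
as claimed. The only subtlety is keeping the two subscripts of $d_{\cdot,\cdot}$ straight and inverting correctly when passing from the expansion of $\lambda\alpha$ to that of $\alpha$; beyond this, the proof is pure bookkeeping and a standard orthogonality identity.
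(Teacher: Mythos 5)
Your proof is correct and follows essentially the same route as the paper's: both arguments show that $d_{x^{-1}y\alpha,\alpha}$ depends only on the coset $\alpha\F_{q}^{\times}$ and runs bijectively over $\F_{q}^{\times}$ as that coset varies, the only difference being that you compute in a fixed basis $\{1,\tau\}$ while the paper works in the basis $\{1,\beta\}$ with $\beta=x^{-1}y$. Your explicit formula $d_{x^{-1}y\alpha,\alpha}=\bigl(\mu+\nu(a/b)\bigr)^{-1}$ makes the bijectivity onto $\F_{q}^{\times}$ immediate, slightly streamlining the paper's separate injectivity argument.
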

\begin{proof}
We first recall some notation: given $\epsilon, \gamma \in \F_{q^{2}} \setminus \F_{q}$, $(c_{\epsilon,\gamma}, d_{\epsilon,\gamma}) \in \F_{q}\times \bb{F}_{q}^{\times}$ is the unique pair satisfying  
$\gamma=c_{\epsilon,\gamma}+ d_{\epsilon,\gamma}\epsilon$.   We can rewrite this equation as
\begin{equation}\label{eq:eps}
    \epsilon = -c_{\epsilon,\gamma} d_{\epsilon,\gamma}^{-1} + d_{\epsilon,\gamma}^{-1}\gamma,
\end{equation}
and thus, 
\begin{equation}\label{eq:getoeg}
c_{\gamma,\epsilon}= -c_{\epsilon,\gamma}d_{\epsilon,\gamma}^{-1}\quad\text{and}\quad d_{\gamma,\epsilon}= d_{\epsilon,\gamma}^{-1} .
\end{equation}

Set $\beta:=x^{-1}y$,  $r:=c_{\beta,\beta^{2}}$, and $s:=d_{\beta,\beta^{2}}$. By design $\beta \in \F_{q^{2}}^{\times}\setminus \F_{q}^{\times}$. Consider $\alpha \in \bb{F}_{q^{2}}^{\times} \setminus \left(\bb{F}_{q}^{\times} \cup \beta^{-1} \bb{F}_{q}^{\times} \right)$. By \eqref{eq:eps}, we have 
\begin{equation}\label{eq:atobtoa}
\beta = -c_{\beta,\alpha}d_{\beta,\alpha}^{-1} +d_{\beta,\alpha}^{-1}\alpha.
\end{equation}
Starting from $\alpha = c_{\beta,\alpha} + d_{\beta,\alpha} \beta$, we obtain
\begin{align*}
    \beta \alpha &= c_{\beta,\alpha} \beta + d_{\beta,\alpha} \beta^{2} \\
    & =  d_{\beta,\alpha}r+ (c_{\beta,\alpha}+sd_{\beta,\alpha}) \beta \qquad\text{(since $\beta^{2}=r+s\beta$)} \\
     &= (d_{\beta, \alpha}r- c^{2}_{\beta, \alpha}d_{\beta, \alpha}^{-1}+c_{\beta, \alpha}s) +(c_{\beta, \alpha}d_{\beta, \alpha}^{-1}+s) \alpha,
\end{align*}
with the last step following from \eqref{eq:atobtoa}. We have now shown that $d_{\alpha,\beta\alpha} = (c_{\beta, \alpha}d_{\beta,\alpha}^{-1}+s)$. Since $\alpha\notin \bb{F}_{q}^{\times} \cup \beta^{-1} \bb{F}_{q}^{\times}$, we have $\beta \alpha, \alpha \in \bb{F}_{q^{2}}^{\times} \setminus \bb{F}_{q}^{\times}$, and thus $d_{\alpha,\beta\alpha} \neq 0$. Now, using \eqref{eq:getoeg}, we have  
\begin{equation}\label{eq:batoa}
d_{\beta\alpha,\alpha} = d_{\alpha,\beta\alpha}^{-1} = (c_{\beta, \alpha}d_{\beta, \alpha}^{-1}+s)^{-1}.
\end{equation}

We claim that the function $\alpha \mapsto d_{\beta\alpha,\alpha}$ from $\bb{F}_{q^{2}}^{\times}\setminus \left(\bb{F}_{q}^{\times} \cup \beta^{-1} \bb{F}_{q}^{\times} \right)$ to $\bb{F}_{q}^{\times}$, is (i) constant on left cosets (in $\bb{F}_{q^{2}}^{\times}$) of $\bb{F}_{q}^{\times}$ that sit in the domain and (ii) takes distinct values on distinct $\bb{F}_{q}^{\times}$-left cosets in the domain. As the domain $\bb{F}_{q^{2}}^{\times}\setminus \left(\bb{F}_{q}^{\times} \cup \beta^{-1} \bb{F}_{q}^{\times} \right)$ partitions into $(q-1)$ distinct $\bb{F}_{q}^{\times}$-cosets in $\bb{F}_{q^{2}}^{\times}$, the result follows from the claim.

Given $t \in \bb{F}_{q}^{\times}$, we observe that $tc_{\beta, \alpha}=c_{\beta, t\alpha}$ and $td_{\beta,\alpha}=d_{\beta, t\alpha}$. Thus, by \eqref{eq:batoa}, for any $\alpha \in \bb{F}_{q^{2}}^{\times}\setminus \left(\bb{F}_{q}^{\times} \cup \beta^{-1} \bb{F}_{q}^{\times} \right)$, we have $d_{\beta \alpha,\alpha}=d_{\beta t\alpha, t\alpha}$.  Conversely, suppose that  $\alpha_{1},\alpha_{2} \in \bb{F}_{q^{2}}^{\times}\setminus \left(\bb{F}_{q}^{\times} \cup \beta^{-1} \bb{F}_{q}^{\times} \right)$ with $d_{\beta\alpha_{1},\alpha_{1}}=d_{\beta\alpha_{2}, \alpha_{2}}$. Then, using \eqref{eq:batoa}, we have 
 $c_{\beta, \alpha_{1}}d_{\beta, \alpha_{1}}^{-1}=c_{\beta, \alpha_{2}}d_{\beta, \alpha_{2}}^{-1}$.  
 Setting $t=d_{\beta, \alpha_{1}}d_{\beta, \alpha_{2}}^{-1}$, we have 
 \[\alpha_{1}=c_{\beta, \alpha_{1}} + d_{\beta, \alpha_{1}} \beta = tc_{\beta, \alpha_{2}} +td_{\beta,\alpha_{2}} \beta =t\alpha_{2},\] and thus, $\alpha_{1} \in \alpha_{2}\bb{F}_{q}^{\times}$.
 This proves the claim in the above paragraph, which concludes our proof.
\end{proof}

Using \eqref{eq:dintchsum} and the above result, we proved the following.

\begin{cor}\label{cor:charsumdiag}
Let $x,y \in \F_{q^{2}}^{\times}$ with $x^{-1}y \notin \F_{q}$, and let $\theta \in \widehat{B}$. Then 
\[I[\theta](m_{x,y}H) = (\theta_{1}(x)\theta_{2}(y)+\theta_{1}(y)\theta_{2}(x)) \theta(\bo(q)) + {(q-1)}\theta_{1}(x)\theta_{2}(y)\theta(C) \theta_{2}(\F_{q}^{\times})\]
\end{cor}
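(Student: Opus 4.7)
The plan is to derive the corollary as a one-step consequence of the preceding lemma and equation \eqref{eq:dintchsum}. That equation, which has already been established by specialising Corollary~\ref{cor:chsums} to $g=m_{x,y}$, expresses $I[\theta](m_{x,y}H)$ as a boundary piece $(\theta_1(x)\theta_2(y)+\theta_1(y)\theta_2(x))\theta(\bo(q))$ coming from the two fixed points $0$ and $\infty$ of $\sigma_{m_{x,y}}$ on $\PP_1$, plus the product $\theta_1(x)\theta_2(y)\theta(C)$ with the character sum $\sum_\alpha \theta_2(d_{x^{-1}y\alpha,\alpha})$ taken over $\alpha\in \F_{q^2}^\times\setminus(\F_q^\times\cup y^{-1}x\F_q^\times)$. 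This decomposition uses only the orbit data for $m_{x,y}$ acting via $\sigma$, namely that it fixes $0$ and $\infty$, sends every other $\alpha\in\F_{q^2}$ to $x^{-1}y\alpha$, and that $b(m_{x,y},\beta)$ is $m_{x,y}$ or $m_{y,x}$ accordingly.

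The only remaining step is to apply the previous lemma with $\chi=\theta_2$; this collapses the inner character sum to $(q-1)\theta_2(\F_q^\times)$. Substituting this value into \eqref{eq:dintchsum} produces the formula in the corollary verbatim. So the proof is purely a substitution, with no additional calculation needed.

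If one wants to identify a genuine obstacle, it lies not in the corollary but in the character sum lemma that feeds it. The key point there is that the map $\alpha\mapsto d_{x^{-1}y\alpha,\alpha}$ is constant on each $\F_q^\times$-coset inside the domain (by the homogeneity relations $c_{\beta,t\alpha}=tc_{\beta,\alpha}$ and $d_{\beta,t\alpha}=td_{\beta,\alpha}$, combined with the reciprocal formula \eqref{eq:batoa}) and takes distinct values on distinct cosets; since the domain contains exactly $q-1$ such cosets and the image must fill $\F_q^\times$, the sum becomes $(q-1)\chi(\F_q^\times)$. Once that identity is in place, the assembly into the stated expression for $I[\theta](m_{x,y}H)$ is immediate.
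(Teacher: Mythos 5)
Your proposal matches the paper's own proof exactly: the corollary is obtained by substituting the value $(q-1)\theta_{2}(\F_{q}^{\times})$ from the preceding character-sum lemma into \eqref{eq:dintchsum}, with no further computation. Your supplementary remarks on why that lemma holds (constancy on $\F_{q}^{\times}$-cosets and injectivity across the $q-1$ cosets of the domain) also track the paper's argument.
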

\subsection{Eigenvalues of $D$.}
We now describe all the eigenvalues of $D$. Let $\chi \in \irr(G\sslash H)$. Using \eqref{eq:osct} and the definition of $D$, we see that 

\begin{equation*}
\mcal{E}_{\chi} := \dfrac{1}{2} \sum\limits_{\{(x,y) \in \Delta \times \Delta\ :\ x\neq y\}} \dfrac{\chi(m_{x,y}H)}{|H \cap H^{m_{x,y}}|} 
\end{equation*}  
is an eigenvalue of $D$. Moreover $\{\mcal{E}_{\chi}\ :\ \chi \in \irr(G\sslash H)\}$ is the complete set of eigenvalues of $D$. For technical reasons, we extend the definition of $\mcal{E}_{\chi}$ to all complex characters of $G$.

Given a complex character $\chi$ of $G$, we define
\begin{equation}\label{eq:echi}
\mcal{E}_{\chi} := \dfrac{1}{2} \sum\limits_{\{(x,y) \in \Delta \times \Delta\ :\ x\neq y\}} \dfrac{\chi(m_{x,y}H)}{|H \cap H^{m_{x,y}}|} 
\end{equation}

For each $x \in \F_{q}$, $c_{1}(x)\in H$, and $H$ contains exactly $q-1$ elements conjugate to $c_{2}(x)$. All the other elements are conjugate to an element of the form $c_{3}(x,y)$ for some $(x,y) \in \F^{\times}_{q^{2}} \times \F^{\times}_{q^{2}}$ with $x \neq y$. This shows that $\left\langle \pi[\mu]|_{H},\one \right\rangle=0$, for every cuspidal character $\pi[\mu]$ of $G$. Thus no cuspidal character is in $\irr(G \sslash H)$.

Now, we figure out which parabolic characters are in $\irr(G \sslash H)$. Recall that a parabolic character is an irreducible summand of a character obtained by inducing a linear character of $B$. Consider a linear character $\theta=[\theta_{1},\theta_{2}]$ (here $\theta_{1},\theta_{2} \in \F_{q^{2}}^{\times}$ and $\theta$ is as defined in \eqref{eq:linchpar}) of $B$. We now consider $I[\theta]:=\mrm{Ind}_{B}^{G}(\theta)$. We have 
\begin{align*}
\left\langle \one, I[\theta]|_{H} \right\rangle &= \mrm{Trace}(M_{\theta}) \\
& = \dfrac{(q+1)\theta(\bo(q))}{|H|} + \dfrac{(q^{2}-q)\theta(C)}{|H|} \ \text{using Lemma~\ref{lem:charsumH}}).
\end{align*} 

Therefore, $\left\langle \one, I[\theta]|_{H} \right\rangle\neq 0$ if and only if either $\bo(q) \subset \mrm{Ker}(\theta)$ or $C \subset \mrm{Ker}(\theta)$. We observe that \\
(i) $\bo(q) \subset \mrm{Ker}(\theta)$ if and only if $\F_{q}^{\times} \subset \mrm{Ker}(\theta_{1})$ (equivalently $\theta_{1}^{q+1}=\one$) and $\F_{q}^{\times} \subset \mrm{Ker}(\theta_{2})$ (equivalently $\theta_{2}^{q+1}=\one$); and\\
(ii) $C \subset \mrm{Ker}(\theta)$, if and only if $\theta_{1}=\theta_{2}^{-q}$.  \\

We now conclude that

\[\left\langle \one, I[\theta]|_{H} \right\rangle = \begin{cases}
1 & \text{if $\theta_{1} \neq \theta_{2}$ and $\theta_{1}^{q+1}=\one= \theta_{2}^{q+1}$}; \\
1 & \text{if $\theta_{1} \neq \theta_{2}$ and $\theta_{1}=\one= \theta_{2}^{-q}$}; \\
2 & \text{if $\theta_{1} = \theta_{2}$ and $\theta_{1}^{q+1}=\one= \theta_{2}^{q+1}$}; \\
0 & \text{otherwise.}
\end{cases} \]

Let $\theta_{\lambda}:=[\lambda,\lambda]$, for some $\lambda \in \widehat{\F_{q^{2}}^{\times}}$ such that $\lambda^{q+1}=1$. We recall that $I[\theta_{\lambda}]=\lambda + S_{\lambda}$ is the decomposition of $I[\theta_{\lambda}]$ into irreducible summands. Thus, given  $\lambda \in \widehat{\F_{q^{2}}^{\times}}$, we have $\lambda \in \irr(G\sslash H)$ (respectively $S_{\lambda} \in \irr(G\sslash H)$) if and only if $\lambda^{q+1}=1$. Recalling that $\{\mcal{E}_{\chi}\ :\ \chi \in \irr(G\sslash H)\}$ is the complete set of eigenvalues of $D$, we have now proved the following result.

\begin{lemma}\label{lem:bevalue}
Any eigenvalue of $D$ is an element of 
\[\{\mcal{E}_{I[\theta]}\ :\ \bo(q) \subset \mrm{Ker}(\theta)\ \text{or}\ C \subset \mrm{Ker}(\theta)\} \cup \{\mcal{E}_{\lambda},\mcal{E}_{S_{\lambda}}\ :\ \lambda \in \widehat{\F_{q^{2}}^{\times}}\ \&\ \lambda^{q+1}=\one\}.\]
\end{lemma}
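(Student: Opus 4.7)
The plan is to identify $\irr(G\sslash H)$ explicitly, since by \eqref{eq:osct} and \eqref{eq:echi} the full eigenvalue spectrum of $D$ is indexed by this set. By Frobenius reciprocity, $\chi\in\irr(G\sslash H)$ precisely when $\langle \chi|_H,\one\rangle\geq1$. I will analyze each family of irreducible characters of $G=\gl(2,q^2)$ — linear, Steinberg-type, cuspidal, and principal series — using the character table obtained by substituting $q\mapsto q^2$ in Table~\ref{tab:gl2qct}.

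For cuspidal characters $\pi[\mu]$, every element of $H$ is $G$-conjugate either to some $c_1(x)$ with $x\in\F_q^\times$, to some $c_2(x)$ with $x\in\F_q^\times$, or to some $c_3(x,y)$ with $x,y\in\F_{q^2}^\times$, $x\neq y$ (this last case absorbs both the split and the $\F_q$-irreducible semisimple $H$-classes, since any $\F_q$-irreducible characteristic polynomial of degree two splits over $\F_{q^2}$). Because $\pi[\mu]$ vanishes on every $c_3$ class, while the $(q^2-1)\mu(x)$ contribution from each scalar $c_1(x)\in H$ cancels the $-\mu(x)$ contribution coming from the $H$-class of $c_2(x)$ (of size $q^2-1$), one obtains $\langle\pi[\mu]|_H,\one\rangle=0$ for every cuspidal $\pi[\mu]$, so no cuspidal character belongs to $\irr(G\sslash H)$.

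For the parabolically induced characters $I[\theta]$ with $\theta=[\theta_1,\theta_2]$, the multiplicity equals $\mrm{Tr}(M_\theta)/|H|$, where $M_\theta=P_\theta(H)$. By Lemma~\ref{lem:charsumH}, the diagonal of $M_\theta$ splits according to the two $H$-orbits $\PP_1$ (of size $q+1$) and $\PP_2$ (of size $q^2-q$), yielding
\[
\mrm{Tr}(M_\theta)=(q+1)\,\theta(\bo(q))+(q^2-q)\,\theta(C),
\]
which is nonzero iff $\bo(q)\subset\mrm{Ker}(\theta)$ (equivalently $\theta_i^{q+1}=\one$ for $i=1,2$) or $C\subset\mrm{Ker}(\theta)$ (equivalently $\theta_1=\theta_2^{-q}$). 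When $\theta_1\neq\theta_2$, $I[\theta]$ is irreducible and contributes the terms $\mcal{E}_{I[\theta]}$ in the statement.

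The remaining case $\theta=\theta_\lambda:=[\lambda,\lambda]$ makes $I[\theta_\lambda]=\lambda+S_\lambda$ reducible. Both kernel conditions collapse to $\lambda^{q+1}=\one$, and when this holds the multiplicity formula returns $2$, so both $\lambda$ and $S_\lambda$ appear in $\one_H^G$ with multiplicity one, producing the eigenvalues $\mcal{E}_\lambda$ and $\mcal{E}_{S_\lambda}$. Assembling the parabolic, linear, and Steinberg contributions yields the union in the statement. The only nontrivial step is the cuspidal cancellation; the remainder is organizational, matching the kernel conditions on $\theta$ with the summand structure of $I[\theta]$.
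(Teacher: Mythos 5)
Your proof is correct and follows essentially the same route as the paper: you rule out cuspidal characters via the cancellation between the scalar classes and the $c_{2}(x)$ classes inside $H$, compute $\langle \one, I[\theta]|_{H}\rangle = \mrm{Tr}(M_{\theta})/|H|$ from Lemma~\ref{lem:charsumH} to reduce to the two kernel conditions $\bo(q)\subset\mrm{Ker}(\theta)$ or $C\subset\mrm{Ker}(\theta)$, and then split off the reducible case $\theta_{1}=\theta_{2}$ exactly as the paper does. Your count of $q^{2}-1$ for the $H$-class of $c_{2}(x)$ is the correct one (the paper's text says $q-1$ there, a slip), and your explicit $1/|H|$ normalization of the trace is the intended reading of the paper's display.
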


We now compute $\mcal{E}_{I[\theta]}$. We observe that $|H \cap H^{m_{x,y}}|=(q-1)^{2}$. Using this observation and Corollary~\ref{cor:charsumdiag}, we have 
\begin{align}\label{eq:Etheta}
\mcal{E}_{I[\theta]} &= \dfrac{1}{2(q-1)^{2}} \sum\limits_{\{(x,y) \in \Delta \times \Delta\ :\ x\neq y\}} (\theta_{1}(x)\theta_{2}(y)+\theta_{1}(y)\theta_{2}(x)) \theta(\bo(q)) + (q-1)\theta_{1}(x)\theta_{2}(y)\theta(C) \theta_{2}(\F_{q}^{\times}) \notag \\
&= \left[\dfrac{\theta(\bo(q))}{(q-1)^{2}} + \dfrac{\theta(C)\theta_{2}(\F_{q}^{\times})}{2(q-1)} \right] \times \sum\limits_{\{(x,y) \in \Delta \times \Delta\ :\ x\neq y\}} \theta_{1}(x)\theta_{2}(y)
\end{align}
We are now ready to prove Proposition~\ref{prop:0m4evalue}. We do this in a series of lemmas.

\begin{lemma}
For all $\theta \in \widehat{B}$, $\mcal{E}_{I[\theta]}$ is an integer divisible by $4$.
\end{lemma}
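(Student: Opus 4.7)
The plan is to apply formula \eqref{eq:Etheta} directly and carry out a case analysis on the pair $(\theta_1,\theta_2)$ where $\theta=[\theta_1,\theta_2]$ with $\theta_1,\theta_2\in\widehat{\F_{q^2}^\times}$. Evaluating $\theta$ on the defining matrices of $\bo(q)$ and $C$ gives the equivalences $\bo(q)\subseteq\mrm{Ker}(\theta)\Longleftrightarrow \theta_1^{q+1}=\theta_2^{q+1}=\one$ and $C\subseteq\mrm{Ker}(\theta)\Longleftrightarrow \theta_1=\theta_2^{-q}$.

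First I will dispose of the cases in which the bracket in \eqref{eq:Etheta} is forced to vanish. When $\bo(q)\not\subseteq\mrm{Ker}(\theta)$ and $C\not\subseteq\mrm{Ker}(\theta)$, orthogonality yields $\theta(\bo(q))=\theta(C)=0$, so the bracket is zero. When only $C\subseteq\mrm{Ker}(\theta)$, I observe that if one also had $\theta_2^{q+1}=\one$ then $\theta_2^{-q}=\theta_2$, forcing $\theta_1=\theta_2$ and hence $\bo(q)\subseteq\mrm{Ker}(\theta)$ after all. Therefore in this case $\theta_2|_{\F_q^\times}\neq\one$, giving $\theta_2(\F_q^\times)=0$ and again making the bracket vanish. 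In both situations $\mcal{E}_{I[\theta]}=0$, trivially divisible by $4$.

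The substantive case is $\bo(q)\subseteq\mrm{Ker}(\theta)$, where $\theta_1,\theta_2$ descend to characters of the cyclic quotient $\F_{q^2}^\times/\F_q^\times$ of order $q+1$, and $\Delta$ serves as a set of representatives. The inner sum decomposes as
\[
\sum_{(x,y)\in\Delta^2,\,x\neq y}\theta_1(x)\theta_2(y)=\theta_1(\Delta)\,\theta_2(\Delta)-(\theta_1\theta_2)(\Delta),
\]
and by orthogonality for the quotient $\F_{q^2}^\times/\F_q^\times$, each of $\theta_1(\Delta)$, $\theta_2(\Delta)$ and $(\theta_1\theta_2)(\Delta)$ equals $q+1$ or $0$ according to whether the corresponding character is trivial. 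A short subcase analysis (splitting on which of $\theta_1$, $\theta_2$, $\theta_1\theta_2$ are trivial, and bearing in mind whether we are also in $C\subseteq\mrm{Ker}(\theta)$) shows that the inner sum always lies in $\{0,\,-(q+1),\,q(q+1)\}$. Multiplying by the appropriate bracket value, namely $q$ if only $\bo(q)$ is in the kernel (i.e.\ $\theta_1\neq\theta_2$) and $q+(q^2-1)/2$ if both are (i.e.\ $\theta_1=\theta_2$), produces an integer that is a $\Z$-multiple of $q+1$ in every subcase.

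The hypothesis $q\equiv 3\pmod 4$ now gives $q+1\equiv 0\pmod 4$, so every such multiple is divisible by $4$, completing the proof. I expect the only delicate point to be the bookkeeping for the subcase where exactly one of $\bo(q)$, $C$ lies in $\mrm{Ker}(\theta)$, where one must combine the two kernel conditions to conclude that the contribution actually vanishes; once that is settled the rest of the argument reduces to the orthogonality calculation above and the numerical observation that $4\mid q+1$.
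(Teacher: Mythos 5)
Your proof is correct and follows essentially the same route as the paper's: both start from \eqref{eq:Etheta}, split into cases according to whether $\bo(q)$ or $C$ lies in $\mrm{Ker}(\theta)$, show the bracket vanishes unless $\bo(q)\subseteq\mrm{Ker}(\theta)$ (using that $C\subseteq\mrm{Ker}(\theta)$ together with $\theta_2^{q+1}=\one$ forces $\theta_1=\theta_2$), and then use orthogonality to see that the inner sum is a multiple of $q+1$, which is divisible by $4$ since $q\equiv 3\pmod 4$. The only cosmetic difference is that you evaluate the inner sum directly in the quotient $\F_{q^2}^\times/\F_q^\times$ while the paper rescales to sums over $\F_{q^2}^\times$; the content is identical.
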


\begin{proof}

Let $\theta=[\theta_{1}, \theta_{2}]$
We recall that $\theta$ is a linear character of $\bo(q)$ and $C$ and since $\theta_{2}$ is a linear character of $\F_{q}^{\times}$. Therefore, $\left[\dfrac{\theta(\bo(q))}{(q-1)^{2}} + \dfrac{\theta(C)\theta_{2}(\F_{q}^{\times})}{2(q-1)} \right]$  is an element in 
$\{0, q, \dfrac{(q^{2}-1)}{2}, q+ \dfrac{(q^{2}-1)}{2}\}$, and thus an integer. 

Case 1: First, we consider the case in which $\mrm{Ker}(\theta) \supset \bo(q)$, that is, $\F_{q}^{\times}$ is in the kernel of both $\theta_{1}$ and $\theta_{2}$. We have
\begin{align*}
\sum\limits_{\{(x,y) \in \Delta \times \Delta\ :\ x\neq y\}} \theta(m_{x,y}) &= 
\sum\limits_{\{(x,y) \in \Delta \times \Delta\}} \theta(m_{x,y}) -\sum\limits_{x \in \Delta} \theta(x,x)  \\
&=\sum\limits_{\{(x,y) \in \Delta \times \Delta\}} \theta_{1}(x) \theta_{2}(y) -\sum\limits_{x \in \Delta} \theta_{1}(x)\theta_{2}(x)  \\
&= \dfrac{1}{(q-1)^{2}} \left(\sum\limits_{\{w \in \F_{q^{2}}^{\times} \times \F_{q^{2}}^{\times}\}} \theta(w) \right) -\dfrac{1}{q-1} \left(\sum\limits_{x \in \F_{q^{2}}^{\times}} \theta_{1}(x)\theta_{2}(x) \right),
\end{align*}
with the last equality following from the fact that $\F_{q}^{\times}$ is in the kernel of both $\theta_{1}$ and $\theta_{2}$. As $\theta$ is an irreducible representation of $\F_{q^{2}}^{\times} \times \F_{q^{2}}^{\times}$, the first sum is either $0$ or $(q+1)^{2}$. As $\theta_{1}$ and $\theta_{2}$ are irreducible characters of $\F_{q^{2}}^{\times}$, using orthogonality of characters, the second sum is either $q+1$ or $0$. Thus, if $\mrm{Ker}(\theta) \supset \bo(q)$, then  $\sum\limits_{\{(x,y) \in \Delta \times \Delta\ :\ x\neq y\}} \theta(m_{x,y})$ is an integer divisible by $q+1$. Thus, in this case, $q+1 \mid \mcal{E}_{I[\theta]}$. Since $q\equiv 3 \pmod{4}$, this implies that $4 \mid \mcal{E}_{I[\theta]}$.

Case 2 :Now, we consider the case $C \subset \mrm{Ker}(\theta)$. This condition is satisfied if and only if $\theta_{1}=\theta_{2}^{-q}$. Thus, we have $\theta_{1}=\theta_{2}^{-q}$. \\
Subcase 1: If $\F_{q}^{\times} \not\subset \mrm{Ker}(\theta_{2})$, then  $\bo(q) \not\subset \mrm{Ker}(\theta)$. Thus, in this case, $\left[\dfrac{\theta(\bo(q))}{(q-1)^{2}} + \dfrac{\theta(C)\theta_{2}(\F_{q}^{\times})}{2(q-1)} \right]=0$, and thus, $\mcal{E}_{I[\theta]}=0$.\\
Subcase 2: If $\F_{q}^{\times} \subset \mrm{Ker}(\theta_{2})$, since $\theta_{1}=\theta_{2}^{-q}$, we must have $\F_{q}^{\times} \subset \mrm{Ker}(\theta_{1})$. Therefore, in this case, $\bo(q) \subset \mrm{Ker}(\theta)$. From Case 1, it follows that 
$4 \mid \mcal{E}_{I[\theta]}$.
\end{proof}

We now turn our attention to linear characters in $\irr(G\sslash H)$.
\begin{lemma}
For all $\lambda \in \widehat{G}\cong \widehat{\F_{q^{2}}^{\times}} $ with $\lambda^{q+1}=\one$, $\mcal{E}_{\lambda}$ is an integer divisible by $4$.
\end{lemma}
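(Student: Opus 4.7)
The plan is to compute $\mcal{E}_\lambda$ directly from \eqref{eq:echi} by exploiting the fact that the linear character $\lambda \circ \det$ is essentially trivial on $H$. The condition $\lambda^{q+1} = \one$ is equivalent to $\lambda|_{\F_{q}^\times} = \one$, because $\F_q^\times$ is the unique subgroup of index $q+1$ in the cyclic group $\F_{q^2}^\times$. Consequently, $\lambda(\det(h)) = 1$ for every $h \in H = \gl(2,q)$, and so
\[\lambda(m_{x,y}H) = \lambda(xy)\sum_{h\in H}\lambda(\det(h)) = \lambda(xy)\,|H|.\]

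Next, as was already observed in the preceding case $\mcal{E}_{I[\theta]}$, one has $|H \cap H^{m_{x,y}}| = (q-1)^2$ whenever $x^{-1}y\notin\F_q^\times$, and $|H| = q(q-1)^2(q+1)$. Substituting into \eqref{eq:echi} gives
\[\mcal{E}_\lambda = \frac{q(q+1)}{2}\sum_{\substack{(x,y)\in\Delta\times\Delta\\ x\neq y}}\lambda(xy).\]
Since $\lambda$ is trivial on $\F_q^\times$, it factors through $\F_{q^2}^\times/\F_q^\times$, a cyclic group of order $q+1$, and the set $\Delta$ is a full system of coset representatives. By orthogonality, $\sum_{x\in\Delta}\lambda(x)$ equals $q+1$ if $\lambda = \one$ and $0$ otherwise; applying the same reasoning to $\lambda^2$ (which also satisfies $(\lambda^2)^{q+1}=\one$), $\sum_{x\in\Delta}\lambda(x^2)$ equals $q+1$ if $\lambda^2=\one$ and is $0$ otherwise.

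Writing the double sum as $\bigl(\sum_{x\in\Delta}\lambda(x)\bigr)^2 - \sum_{x\in\Delta}\lambda(x^2)$ and splitting into cases yields
\[\sum_{\substack{(x,y)\in\Delta\times\Delta\\ x\neq y}}\lambda(xy) = \begin{cases} q(q+1) & \text{if } \lambda = \one,\\ -(q+1) & \text{if } \lambda \text{ is the unique character of order } 2,\\ 0 & \text{otherwise.}\end{cases}\]
In every case $\mcal{E}_\lambda$ is an integer multiple of $\tfrac{q(q+1)^2}{2}$. Because $q\equiv 3\pmod 4$, we have $4\mid q+1$, so $(q+1)^2/2$ is divisible by $8$, and in particular $4\mid\mcal{E}_\lambda$. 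No serious obstacle arises here; the only care needed is correctly identifying which $\lambda$ satisfy $\lambda^{q+1}=\one$ and tracking the nonzero contributions from $\lambda=\one$ and the quadratic character, both of which are absorbed by the large factor of $(q+1)^2$ coming from $|H|$ and $\Delta$.
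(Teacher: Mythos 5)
Your proof is correct and follows essentially the same route as the paper: use $\lambda^{q+1}=\one \Leftrightarrow \F_q^\times\subseteq\ker\lambda$ to get $\lambda(m_{x,y}H)=\lambda(xy)|H|$, reduce $\mcal{E}_\lambda$ to $\tfrac{q(q+1)}{2}\sum_{x\neq y}\lambda(xy)$, evaluate the sum by orthogonality (splitting off the diagonal via $\lambda^2$), and conclude from $4\mid q+1$. If anything, your bookkeeping is slightly cleaner than the paper's, which passes from the sum over $\Delta\times\Delta$ to a sum over all of $\F_{q^2}^\times\times\F_{q^2}^\times$ with some normalization constants dropped along the way (harmless for the divisibility conclusion, but your version avoids the issue).
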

\begin{proof}
Using \eqref{eq:echi} and $|H \cap H^{m_{x,y}}|=(q-1)^{2}$, we have 
\[\mcal{E}_{\lambda} = \dfrac{\lambda(H)}{2(q-1)^{2}} \left(\sum\limits_{\{(x,y) \in \Delta \times \Delta\ :\ x\neq y\}} \lambda(m_{x,y}) \right).\] Since $\lambda^{q+1}=\one$, we have $\F_{q}^{\times}\subset \mrm{Ker}(\lambda)$, and therefore, $\lambda|H= \one$. Thus, we have 
\begin{align*}
\mcal{E}_{\lambda} &= \dfrac{|H|}{2(q-1)^{2}} \left(\sum\limits_{\{(x,y) \in \Delta \times \Delta\ :\ x\neq y\}} \lambda(m_{x,y}) \right) \\
&= \dfrac{q(q+1)}{2} \left(\sum\limits_{\{(x,y) \in \Delta \times \Delta\ :\ x\neq y\}} \lambda(m_{x,y}) \right).
\end{align*}
Since $\F_{q}^{\times}\subset \mrm{Ker}(\lambda)$, we have 
$\lambda(m_{x,y})=\lambda_{m_{tx,ty}}$, for all $t \in \F_{q}$. Therefore,
\begin{align*}
\mcal{E}_{\lambda} &= \dfrac{q(q+1)}{2} \left(\sum\limits_{\{(x,y) \in \F_{q^{2}}^{\times} \times \F_{q^{2}}^{\times} \ :\ x\F_{q}^{\times}\neq y \F_{q}^{\times}\}} \lambda(m_{x,y}) \right) \\
&= \dfrac{q(q+1)}{2} \left(\sum\limits_{\{(x,y) \in \F_{q^{2}}^{\times} \times \F_{q^{2}}^{\times} \}} [\lambda,\lambda](x,y) - \sum\limits_{x \in \F_{q^{2}}^{\times}} \lambda(x^{2})\right)
\end{align*}
Let $S$ denote the subgroup of squares in $\F_{q^{2}}^{\times}$, then we have
\[\mcal{E}_{\lambda} = \dfrac{q(q+1)}{2} \left( [\lambda,\lambda]\left(\F_{q^{2}}^{\times} \times \F_{q^{2}}^{\times}\right)- 2 \lambda(S)\right).\]
Since $[\lambda, \lambda]$ is an irreducible character of $\F_{q^{2}}^{\times} \times \F_{q^{2}}^{\times}$, and since $\lambda$ restricts to an irreducible character of $S$, we have $2\lambda(S) \in \{0,q^{2}-1\}$ and $[\lambda, \lambda]\left(\F_{q^{2}}^{\times} \times \F_{q^{2}}^{\times}\right) \in \{0,(q^{2}-1)^{2}\}$. The result now follows.
\end{proof}

\begin{lemma}
For all $\lambda \in \widehat{\F_{q^{2}}^{\times}} $ with $\lambda^{q+1}=\one$, $\mcal{E}_{S_{\lambda}}$ is an integer divisible by $4$.
\end{lemma}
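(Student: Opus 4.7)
The plan is to exploit the linearity of the functional $\chi\mapsto\mcal{E}_\chi$ on the space of complex characters of $G$ combined with the two preceding lemmas. Since $\chi(m_{x,y}H)=\sum_{h\in H}\chi(m_{x,y}h)$ is linear in $\chi$, the definition \eqref{eq:echi} gives $\mcal{E}_{\chi_1+\chi_2}=\mcal{E}_{\chi_1}+\mcal{E}_{\chi_2}$ for any complex characters $\chi_1$, $\chi_2$ of $G$.

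Next I would recall, as noted in \S~\ref{sec:repandccgl} and used just above, that for the linear character $\theta_\lambda:=[\lambda,\lambda]$ of $B$ one has the decomposition $I[\theta_\lambda]=\lambda+S_\lambda$ into irreducible constituents. Therefore
\[
\mcal{E}_{S_\lambda}=\mcal{E}_{I[\theta_\lambda]}-\mcal{E}_\lambda.
\]
It remains to verify that both terms on the right-hand side are integers divisible by $4$, so that their difference is as well.

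For the first term, I would check that $\theta_\lambda$ satisfies the hypothesis of the lemma on $\mcal{E}_{I[\theta]}$: the assumption $\lambda^{q+1}=\one$ gives $\F_q^\times\subset\mrm{Ker}(\lambda)$, so both coordinates of $\theta_\lambda=[\lambda,\lambda]$ are trivial on $\F_q^\times$, which is exactly the condition $\bo(q)\subset\mrm{Ker}(\theta_\lambda)$. Thus the preceding lemma gives $4\mid\mcal{E}_{I[\theta_\lambda]}$. For the second term, the lemma immediately preceding this one gives $4\mid\mcal{E}_\lambda$ under the same hypothesis $\lambda^{q+1}=\one$. Combining, $\mcal{E}_{S_\lambda}\in\Z$ and $4\mid\mcal{E}_{S_\lambda}$, completing the proof. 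There is no substantive obstacle here; the work was already done in the previous two lemmas, and the only step to articulate carefully is the additivity of $\mcal{E}_{(\cdot)}$ together with the identification $S_\lambda=I[\theta_\lambda]-\lambda$.
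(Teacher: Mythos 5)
Your proposal is correct and follows exactly the paper's own argument: write $\mcal{E}_{S_\lambda}=\mcal{E}_{I[\lambda,\lambda]}-\mcal{E}_\lambda$ via the decomposition $I[\lambda,\lambda]=\lambda+S_\lambda$ and the linearity of $\chi\mapsto\mcal{E}_\chi$, then invoke the two preceding lemmas. Your explicit check that $\lambda^{q+1}=\one$ forces $\bo(q)\subset\mrm{Ker}([\lambda,\lambda])$ is a small but welcome addition the paper leaves implicit.
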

\begin{proof}
Since $I[\lambda,\lambda]=\lambda + S_{\lambda}$, using \eqref{eq:echi}, we see that 
\[\mcal{E}_{S_{\lambda}}= \mcal{E}_{I[\lambda, \lambda]}-\mcal{E}_{{\lambda}}.\] Using the previous two results, we can conclude that the RHS of the above is $0\pmod{4}$, and thus the result follows.
\end{proof}

Proposition~\ref{prop:0m4evalue} now follows using Lemma~\ref{lem:bevalue}.
\bibliographystyle{plain}
\bibliography{ref}                        
\end{document}